\newcommand{\xquad}{\rule{8pt}{0pt}}
\newcommand{\uS}{\mathbf{\underaccent{\widetilde{\xquad}}{S}}}
\newcommand{\uSs}{\mathbf{{\underaccent{\widetilde{\xquad}}{S}}_{\mbox{\tiny s}}}}
\newcommand{\uSsp}{\mathbf{{\underaccent{\widetilde{\xquad}}{S}}_{\mbox{\tiny s}}'}}
\newcommand{\uSNU}{\mathbf{{\underaccent{\widetilde{\xquad}}{S}}_{\mbox{\tiny\scshape nu}}}}
\newcommand{\uSos}{{\mathbf{\underaccent{\widetilde{\xquad}}{S}}}_{os}}
\newcommand{\uSosn}{{\mathbf{\underaccent{\widetilde{\xquad}}{S}}}_{os}^{n}}
\newtheorem{lemma}{Lemma}
\newtheorem{definition}{Definition}
\newtheorem{proposition}{Proposition}
\newtheorem{theorem}{Theorem}
\newtheorem{corollary}{Corollary}
\newcommand{\N}{\mathbb{N}}
\newcommand{\meet}{\wedge}
\newcommand{\bigmeet}{\bigwedge}
\newcommand{\join}{\vee}
\newcommand{\irr}{\textup{Irr}}
\newcommand{\con}{\textup{Con}}
\newcommand{\dom}{\textup{dom}}
\newcommand{\pt}{\textup{pt}}
\newcommand{\idl}{\textup{Idl}}
\newcommand{\gr}{\Gamma}
\newcommand{\PDT}{Preduality Theorem}                    
\newcommand{\DAT}{Dual Adjunction Theorem}               
\newcommand{\FDT}{First Duality Theorem}                 
\newcommand{\NUDT}{NU Duality Theorem}                   
\newcommand{\MSDL}{M-Shift Duality Lemma}                
\newcommand{\DET}{Duality and Entailment Theorem}        
\newcommand{\EL}{Entailment Lemma}                       
\newcommand{\BFSDT}{Brute Force Strong Duality Theorem}  
\newcommand{\MSSDL}{M-Shift Strong Duality Lemma}        
\newcommand{\FSDT}{First Strong Duality Theorem}         
\newcommand{\SSDT}{Second Strong Duality Theorem}        
\newcommand{\NUSDT}{NU Strong Duality Theorem}           
\newcommand{\TALM}{Test Algebra Lemma for maps}          
\newcommand{\SP}{ISP}
\newcommand{\ScP}{IS_cP^+}
\newcommand{\Wlog}{Without loss of generality}
\newcommand{\ignore}[1]{\relax}
\newcommand{\downset}{\delimiter"0223379}
\newcommand{\upset}{\delimiter"0222378}
\renewcommand{\chi}{\mathcal{X}}
\newcommand{\calL}{\mathbb{L}}
\newcommand{\calA}{\mathcal{A}}
\newcommand{\bsr}{\mathcal{BSR}}
\newcommand{\pss}{\mathcal{PSS}}
\newcommand{\Tau}{\mathcal{T}}
\renewcommand{\S}{\mathbb{S}}
\newcommand{\bfX}{\mathbf{X}}
\newcommand{\bfY}{\mathbf{Y}}
\begin{document}

\title{The Dual Geometry of Boolean Semirings}

\author[D. J. Clouse]{Daniel J. Clouse}
\address{{\flushleft Daniel J. Clouse}
\newline\indent
Department of Defense}
\email{beckclouse@netzero.net}

\author[F. Guzm\'an]{Fernando Guzm\'an}
\address{{\flushleft Fernando Guzm\'an}
\newline\indent
Dept. of Mathematical Sciences\\
Binghamton University \\ 
Binghamton N.Y., 13902-6000}
\email{fer@math.binghamton.edu}
\urladdr{http://math.binghamton.edu/fer}

\date{January 28, 2008}

\keywords{Boolean Semiring, Duality, Strong duality, Optimal duality}
\subjclass[2000]{Primary:06D50, 08C15; Secondary:08C05, 18A40, 06E15}

\begin{abstract}

  It is well known that the variety of Boolean semirings, which is
  generated by the three element semiring $\S$, is dual to the
  category of partially Stone spaces.  We place this duality in the
  context of natural dualities. We begin by introducing a topological
  structure $\uS$ and obtain an optimal natural duality
  between the quasi-variety $\SP(\S)$ and the category $\ScP(\uS)$.
  Then we construct an optimal and very small structure $\uSos$ that
  yields a strong duality.  The geometry of some of the partially
  Stone spaces that take part in these dualities is presented, and we
  call them ''{\sl hairy cubes}'', as they are $n$-dimensional cubes
  with unique incomparable covers for each element of the cube.  We
  also obtain a polynomial representation for the elements of the
  hairy cube. 

\end{abstract}

\maketitle

\section{Introduction}

Extensions of the concept of a {\sl Boolean Ring} to include semirings
has been done in several different directions.  One source of
diversity are the different definitions of {\sl semiring}.  The other
is how they get connected to Boolean rings.  We will use the concept
of semiring commonly used in formal languages and automata theory,
that is, the only thing missing in order to be a ring is the existence
of additive inverses (see~\cite{Eilenberg} and~\cite{Kuich-Salomaa}).  
As in Guzman~\cite{Guzman}, we will denote by
$\bsr$ the variety generated by the two
2-element semirings, and will call it the variety of Boolean semirings.
It turns out that this variety is also generated 
by a 3-element semiring with carrying set
$S=\{0,h,1\}$, that we denote $\S$. The semiring $\S$ will play a
crucial role in this paper.

In~\cite{Guzman}, following the ideas of Stone~\cite{Stone} in his now
famous {\sl ``Stone representation theorem''}, a duality is
established between the category $\bsr$ of Boolean semirings and the
category $\pss$ of partially Stone spaces.  On the other hand, Clark
and Davey~\cite{Clark-Davey} present a thorough study of natural
dualities between algebraic and topological quasi-varieties.  It is
the goal of this paper to place the duality from~\cite{Guzman} in the
much richer context of~\cite{Clark-Davey}.

A {\sl structured topological space} consists of 
\[ {\bfX} = \langle X; G, H, R, \Tau \rangle \]
where $\langle X,\Tau\rangle$ is a topological space, $G$ is a set of
finitary (total) operations on $X$, $H$ is a set of finitary partial
operations on $X$ and $R$ is a set of finitary relations on $X$.  The
arities of the operations, partial operations, and relations define
the type of ${\bfX}$.  Given a finite discrete structured topological
space ${\bfX}$, we denote by $\ScP(\bfX)$ the category of
closed substructures of non-empty products of copies of $\bfX$.

On one side of the duality we will have the quasi-variety $\calA=\SP(\S)$
generated by $\S$.  On the other side, we will have the category of 
structured topological spaces $\chi=\ScP(\uS)$, generated by some
appropriate structure $\uS=\langle S;G,H,R,\Tau\rangle$ having
$S=\{0,h,1\}$ as underlying set and $\Tau$ the discrete topology. 

We begin by naming three binary relations on $S$: $r_1$, $r_2$, and
$r_3$, and prove the following result.  

\begin{theorem}
  The structure $\uS=\langle{S;\{r_1,r_2,r_3\},\Tau}\rangle$
  yields an optimal natural duality on $\calA$.
\end{theorem}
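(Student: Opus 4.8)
The plan is to situate the statement entirely inside the Clark--Davey machinery, so the first task is to confirm that $\uS$ is a bona fide alter ego of $\S$. For this I would check that each of the named relations $r_1,r_2,r_3$ is \emph{algebraic}, i.e. a subuniverse of $\S^2$; since $\S$ carries the nullary operations $0,1$ these subuniverses are automatically nonempty. Once algebraicity is in hand the hom-functors $D=\calA(-,\S)$ and $E=\ScP(-,\uS)$ are well defined, together with the natural evaluation maps $e_{\mathbf A}\colon\mathbf A\to ED(\mathbf A)$, and the assertion ``$\uS$ yields a natural duality on $\calA$'' is exactly the claim that every $e_{\mathbf A}$ is an isomorphism.

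Rather than attack surjectivity of $e_{\mathbf A}$ head on, I would bootstrap from a large structure that is already known to dualize. First I would exhibit a near-unanimity term for $\S$ (I expect a ternary majority term, by analogy with the Boolean case); granting this, the \NUDT{} guarantees that the alter ego carrying \emph{all} binary algebraic relations --- all of $\mathrm{Sub}(\S^2)$ --- dualizes $\calA$. The theorem then becomes a trimming problem: recover this whole dualizing set from just three relations. Concretely I would enumerate the finitely many members of $\mathrm{Sub}(\S^2)$ and, using the standard entailment constructs (intersection, coordinate permutation, identification and addition/deletion of dummy coordinates, relational product and converse), produce explicit derivations showing that $\{r_1,r_2,r_3\}$ entails every one of them. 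By the \DET{}, a subset that entails a dualizing set is itself dualizing, so this establishes the duality for $\uS$.

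For optimality I would argue on two fronts. For irredundancy, I would show that for each $i\in\{1,2,3\}$ the pair $\{r_1,r_2,r_3\}\setminus\{r_i\}$ fails to entail $r_i$: the cleanest route is to exhibit, via the \EL{}, a structure-preserving map that respects the other two relations but violates $r_i$, thereby witnessing non-entailment (equivalently, one can use the \TALM{} to produce a small algebra on which the corresponding evaluation map is not onto). For minimality of the relations themselves, I would verify that no $r_i$ can be replaced by a relation strictly below it in the entailment order --- in particular that none is entailed by unary or trivial relations alone --- so that $\{r_1,r_2,r_3\}$ is a minimal dualizing structure and the duality is optimal in the Davey--Priestley sense.

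The verification of the near-unanimity term and the enumeration of $\mathrm{Sub}(\S^2)$ are routine. The main obstacle is the entailment bookkeeping: finding explicit derivations of \emph{every} binary algebraic relation from $\{r_1,r_2,r_3\}$ while simultaneously proving the three sharp non-entailments required for irredundancy. These are finite but delicate computations, and the crux is calibrating the three relations so that their entailment closure neither overshoots (which would force redundancy) nor undershoots (which would break the duality) --- matching $\mathrm{Sub}(\S^2)$ exactly is where the real work lies.
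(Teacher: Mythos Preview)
Your plan matches the paper's argument closely: exhibit the ternary near-unanimity term (the paper uses $t(x,y,z)=xy+yz+xz$), invoke the \NUDT{} so that the full set $\mathrm{Sub}(\S^2)$ dualizes, enumerate that lattice explicitly, and check that $\{r_1,r_2,r_3\}$ entails every member. The entailment bookkeeping is lighter than you anticipate: inverse and binary intersection alone generate all of $\mathrm{Sub}(\S^2)$ from the three given relations (this is read off directly from the Hasse diagram in Lemma~\ref{subS2}), so relational products and coordinate manipulations are unnecessary; the paper then cites the \MSDL{} rather than the \DET{}, but the content is the same.

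Two corrections on the optimality half. First, in this paper ``optimal'' means precisely irredundancy---deleting any single $r_i$ destroys duality---so your second front (showing no $r_i$ can be replaced by something strictly lower in the entailment order) is not part of the statement and should be dropped. Second, the irredundancy witnesses are not uniform in the way your sketch suggests: for $r_2$ and $r_3$ the paper produces unary maps $S\to S$, namely $(0,0,h)$ and $(1,h,1)$, preserving the remaining two relations but not the target; for $r_1$, however, no such unary map exists, and the paper instead applies the \EL{} on the two-point substructure $X=\{(h,0),(0,1)\}\subseteq S^2$, taking $\alpha(h,0)=1$ and $\alpha(0,1)=0$. Your phrase ``a structure-preserving map'' is loose enough to cover this, but be aware that the witnessing domain may have to live inside a higher power of $\uS$ rather than in $\uS$ itself.
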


The proof is in Section~\ref{sec:morphisms}.  The duality is optimal
in the sense that if any one of the relations were to be deleted from
the structure of $\uS$, duality would be lost.  

In the duality $\bsr\leftrightarrows\pss$, a Boolean semiring $A$ is
mapped to the set of prime filters of $A$ (recall that a Boolean
semiring can be viewed as as partially complemented distributive
lattice. See~\cite{Guzman}).  For finite $A$, every prime filter is
the upset of a join-irreducible element.  When $\bfX$ is a closed
substructure of a finite power of $\uS$, we would like to describe the
join-irreducible elements of $\chi(\bfX,\uS)$.  We denote the set of
all of them by $\chi(\bfX,\uS)_J$.  In {\bf Theorem~\ref{thm:ji}},
\stepcounter{theorem}
join-irreducible elements of $\chi(\uS^n,\uS)$ is given; see
Section~\ref{sec:morphisms}.  In~\cite{Clouse} a description of
$\chi(\bfX,\uS)_J$ is given for any $\bfX\in\chi$ that is a closed
substructure of a finite power of $\uS$.  This will appear in a
subsequent paper.  Here we lay the foundation for those results, a
description of the meet semilattice $\chi(\uS^n,\uS)_J$, that we call
the ``{\sl hairy cube}''.  It consists of an $n$-dimensional cube
covered by ``{\sl hairs}''.  More precisely,

\begin{theorem}
  The poset $\chi(\uS^{n},\uS)_J$ consists of two parts:  the ``{\sl
    base}'' $Y^n$ which is an $n$-cube and the ``{\sl hairs}''
  $\chi(\uS^{n},\uS)_J\setminus Y^n$, which are pairwise incomparable.
  Each element of the base is covered by a unique hair.  Each hair
  covers a unique base element.  
\end{theorem}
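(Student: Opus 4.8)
The plan is to read everything off the explicit description of the join-irreducibles furnished by Theorem~\ref{thm:ji}. That theorem presents each element of $\chi(\uS^{n},\uS)_J$ as a concrete relation-preserving map $\uS^{n}\to\uS$, and the order on $\chi(\uS^{n},\uS)_J$ is just the restriction to these maps of the pointwise order on functions $S^n\to S$ coming from the lattice reduct of $\chi(\uS^{n},\uS)$ (recall that, by the duality of the first theorem, $\chi(\uS^{n},\uS)$ is the free algebra $F_{\calA}(n)$, so this is really an analysis of the free Boolean semiring on $n$ generators). The first step is to split the join-irreducibles by their range: those whose range misses $1$ versus those whose range contains $1$. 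I expect, and would check against Theorem~\ref{thm:ji}, that the former are exactly the base $Y^n$ and the latter exactly the hairs. I would then identify $Y^n$ with the $n$-cube by recording, for each base map, the set of coordinates on which it is ``switched on'', and verifying that the pointwise order between two base maps matches inclusion of these sets; this gives the order-isomorphism $Y^n\cong\{0,1\}^n$.

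The second step is to set up the covering correspondence. Using the description, each hair is obtained from a unique base map by raising its value from $h$ to $1$ on a prescribed, minimal set of points of $S^n$ (for $n=1$ this is the passage from $(0,h,h)$ to $(0,h,1)$ and from $(h,h,h)$ to $(1,1,h)$). I would take this assignment as the candidate bijection $c\mapsto b$ between hairs and base elements and check it is well defined and bijective. The covering claims then reduce to two local facts: first, $b\lessdot c$, i.e.\ $b\le c$ pointwise with nothing of $\chi(\uS^{n},\uS)_J$ strictly between, because $c$ exceeds $b$ only by the single step $h\mapsto 1$ at the extremal points and so admits no intermediate relation-preserving map; second, $c$ covers no other base element, because a base element strictly below $c$ and below no intermediate join-irreducible must already agree with $b$.

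The third step, which I expect to be the main obstacle, is controlling the global order so as to exclude spurious comparabilities. Part of this is easy: a hair has a $1$ in its range while a base map does not, so no hair can lie $\le$ a base element, and two hairs whose $1$-supports are incomparable are automatically pointwise incomparable. The delicate part is pairwise incomparability of \emph{all} hairs together with the two uniqueness assertions. The cleanest route I foresee is to show that every hair is a maximal element of $\chi(\uS^{n},\uS)_J$ (nothing in the list of Theorem~\ref{thm:ji} covers a hair), since distinct maximal elements are automatically incomparable; maximality is where the finiteness and precise shape of the description do the work. To organize the cover and uniqueness bookkeeping I would proceed either by a direct case analysis on the set of points where a join-irreducible map attains the value $1$, or by induction on $n$ using the splitting $\uS^{n}=\uS^{n-1}\times\uS$, with the explicit $n=1$ computation ($Y^1$ a two-element chain with two incomparable hairs) as the base case.
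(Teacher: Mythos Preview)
Your proposal is correct and takes essentially the same approach as the paper: the theorem is stated there simply as the combination of Proposition~\ref{prop:thebase} (that $Y^n\cong 2^n$) and Proposition~\ref{prop:covers} (pairwise incomparability of hairs and the unique covering correspondence $\Phi\leftrightarrow\Phi\wedge h$), each proved by induction on $n$ from Theorem~\ref{thm:ji} with Corollary~\ref{StoSj} as base case---precisely your second suggested route. One small caution: your justification that $c$ covers $b$ (``$c$ exceeds $b$ only by the step $h\mapsto 1$, so admits no intermediate relation-preserving map'') is not self-evident when several domain points change simultaneously; in the paper this covering claim and the uniqueness are handled inductively, using the incomparability of hairs already established at level $n-1$ together with the two-case shape of Theorem~\ref{thm:ji}.
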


In {\bf Theorem~\ref{PSSisomorphictohairycube}}
\stepcounter{theorem}%
it is shown that these partial
order properties of $\chi(\uS^{n},\uS)_J$ completely determine it as a
partially Stone space.
We close Section~\ref{sec:ji morphisms} with a polynomial
representation of the join-irreducible elements of
$\chi(\uS^{n},\uS)$.   See {\bf Theorem~\ref{polynomials}}.
\stepcounter{theorem}%

In Section~\ref{sec:makeitstrong} we first establish that the duality
in Theorem~\ref{optdual} is neither a full nor a strong duality.
Then we discuss why this is true and how that duality can easily be
upgraded to a strong duality, following some of the ideas
of~\cite{Clark-Davey}.  Then we show how to construct an optimal and
very small structure $\uSos$ that yields a strong duality on $\calA$.
Despite the fact that $\S$ is not subdirectly irreducible, $\irr(\S)$
is 2, $\uSos$ consists of a single relation $r_2$, and a single
partial operation $\lambda_1$.

\begin{theorem}
Let $\Tau$ denote the discrete topology and
$\uSos=\langle S;\{r_2\},\{\lambda_1\},\Tau\rangle$.  Then $\uSos$ yields an
optimal strong duality on $\calA$.
\end{theorem}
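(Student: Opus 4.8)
The plan is to split the statement into three tasks: that $\uSos$ dualizes $\calA$, that the resulting duality is strong, and that it is optimal among strong-dualizing structures of this shape. For the first task I would avoid verifying a duality criterion from scratch and instead lean on Theorem~\ref{optdual}: since $\langle S;\{r_1,r_2,r_3\},\Tau\rangle$ dualizes $\calA$, it suffices by the \EL\ to show that $\uSos$ entails each of $r_1,r_2,r_3$. The relation $r_2$ is present outright, so the real work is to exhibit $r_1$ and $r_3$ as entailment consequences of $r_2$ together with the graph of the partial operation $\lambda_1$. Concretely, I would build $r_1$ and $r_3$ from $\mathrm{graph}(\lambda_1)$ and $r_2$ by the primitive-positive constructions the entailment calculus permits (intersection, direct product, coordinate projection, and restriction to $\dom(\lambda_1)$); the fact that $\lambda_1$ is a partial \emph{operation} whose domain is a proper subalgebra is exactly what makes its graph usable here. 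This reduces the duality claim to a finite verification.

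For strongness I would invoke the standard criterion, formalized in the \FSDT\ and the \MSSDL, that a dualizing alter ego gives a strong duality precisely when it is injective in its own dual category $\ScP(\uSos)$. The content is therefore to prove that $\uSos$ is injective: every morphism from a closed substructure of a power $\uSos^I$ into $\uSos$ extends to the whole power. This is the step in which the partial operation earns its place. Using the invariant $\irr(\S)=2$ to bound the number of irreducible configurations that must be checked, I would reduce the extension problem to those finitely many cases and show that $\lambda_1$, acting on its subalgebra domain, supplies the missing value needed to extend a partial morphism across the single problematic point, while $r_2$ simultaneously forces the extension to remain structure-preserving. I expect this injectivity verification to be the main obstacle, since it is the only part of the argument that genuinely uses the interaction of $r_2$ and $\lambda_1$ rather than either one alone.

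Finally, for optimality I would delete each ingredient in turn and exhibit the collapse of strong duality. Deleting $\lambda_1$ leaves the purely relational $\langle S;\{r_2\},\Tau\rangle$; since $\{r_2\}$ is contained in the proper subfamily $\{r_1,r_2\}$ that Theorem~\ref{optdual} already shows is insufficient even for duality, and discarding relations only weakens a structure, $\{r_2\}$ alone cannot dualize $\calA$, so strong duality fails. Deleting $r_2$ leaves $\langle S;\{\lambda_1\},\Tau\rangle$, and here I would show that $\mathrm{graph}(\lambda_1)$ by itself fails to entail the relations required for duality, equivalently that it fails the separation condition on a suitable pair of homomorphisms into $\S$; hence duality, and a fortiori strong duality, breaks down. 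Together these two failures establish that $\uSos$ is optimal, completing the proof.
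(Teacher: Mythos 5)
Your treatment of the duality claim and of optimality tracks the paper's own route: the paper's Lemma~\ref{entail}.\ref{entail part 1} shows exactly that $\{\lambda_1\}$ entails $r_1$ (as the domain of $\lambda_1$) and $r_3$ (on which $\lambda_1$ acts as the second projection), and optimality is witnessed by the map $(h,0,0)$, which preserves $\lambda_1$ but not $r_2$, together with the observation from Theorem~\ref{optdual} that $\{r_1,r_2\}$, hence a fortiori $\{r_2\}$ alone, fails to dualize. Those two thirds of your plan are sound.

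The genuine gap is in your argument for strongness. You assert that a dualizing alter ego yields a strong duality ``precisely when it is injective in its own dual category,'' but that is not the criterion: the \FSDT\ says strong $=$ \emph{full} $+$ injective, and full duality is strictly stronger than duality. This paper itself contains the counterexample to your claimed equivalence: Proposition~\ref{duality} establishes that $\uS$ yields a duality \emph{and} is injective in $\chi$, yet Theorem~\ref{notfs} shows $\uS$ yields neither a full nor a strong duality (indeed the non-fullness is \emph{deduced} from injectivity plus non-strongness). So proving that $\uSos$ dualizes $\calA$ and is injective in $\ScP(\uSos)$ would not establish the theorem. The missing ingredient is hom-entailment of the algebraic partial operations: the paper first invokes the \NUSDT\ with $\irr(\S)=2$ to get a strongly dualizing structure $\uSNU$ containing \emph{all} algebraic binary partial operations, then shows (Proposition~\ref{strong}, Corollary~\ref{smallstrong}) that every such partial operation is a restriction of a projection or of $\lambda_1,\lambda_2$, and that $\lambda_2=\lambda_1(\Pi_2^2,\Pi_1^2)$ lies in the enriched partial clone of $\lambda_1$, so that $\overline{\{\lambda_1\}}=\mathcal{P}$ and the \MSSDL\ applies. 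Your use of $\irr(\S)=2$ to ``bound the irreducible configurations in an injectivity check'' misplaces where that invariant enters; without the clone/hom-entailment step your proof of strongness does not go through.
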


Finally, we discuss why the ``Hairy Cube'' will persist in that strong
duality.

\setcounter{corollary}{5}
\begin{corollary}
  Let $\chi_{os}=\ScP(\uSos)$, then for any $n\in\N$,
  $\chi_{os}(\uSosn,\uS)$  is the n-dimensional Hairy Cube. 
\end{corollary}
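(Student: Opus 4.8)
The plan is to show that upgrading the dualizing structure from $\uS$ to the strong dualizing structure $\uSos$ leaves untouched the Boolean semiring that the hom-functor assigns to a finite power, so that the hairy cube simply persists. Write $D,E$ for the dual functors of the duality of Theorem~\ref{optdual} and $D_{os},E_{os}$ for those of the strong duality of the preceding theorem; thus $E(\bfX)=\chi(\bfX,\uS)$ and $E_{os}(\bfX)=\chi_{os}(\bfX,\uSos)$, each carrying the Boolean semiring operations inherited pointwise from $\S$. By definition the $n$-dimensional hairy cube is the poset $\chi(\uS^{n},\uS)_J$ of join-irreducibles of $E(\uS^{n})$.

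The first step is to recall the standard fact (see \cite{Clark-Davey}) that for \emph{any} alter ego of $\S$ the dual of the free algebra $\mathbf{F}_{\calA}(n)$ on $n$ generators is the $n$-th power of that alter ego, and that the evaluation isomorphism $e\colon \mathbf{F}_{\calA}(n)\to E(D(\mathbf{F}_{\calA}(n)))$ realizes this copy of the free algebra as the set of $n$-ary term functions of $\S$ inside $\S^{S^{n}}$: a point of $D(\mathbf{F}_{\calA}(n))=\calA(\mathbf{F}_{\calA}(n),\S)$ is a tuple $(s_{1},\dots,s_{n})\in S^{n}$, and $e$ sends a term function $t$ to the map $(s_{1},\dots,s_{n})\mapsto t(s_{1},\dots,s_{n})$, that is, to $t$ itself. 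Applied to $\uS$ this gives $D(\mathbf{F}_{\calA}(n))\cong\uS^{n}$ and hence $\chi(\uS^{n},\uS)=E(\uS^{n})=\mathbf{F}_{\calA}(n)$ as subalgebras of $\S^{S^{n}}$. Since by the preceding theorem $\uSos$ is also a (strong, hence in particular ordinary) dualizing alter ego of $\S$, the identical argument gives $D_{os}(\mathbf{F}_{\calA}(n))\cong\uSos^{n}$ and $\chi_{os}(\uSos^{n},\uSos)=E_{os}(\uSos^{n})=\mathbf{F}_{\calA}(n)$, the \emph{same} subalgebra of $\S^{S^{n}}$. In particular a map $S^{n}\to S$ preserves $\{r_{2},\lambda_{1}\}$ precisely when it preserves $\{r_{1},r_{2},r_{3}\}$, both conditions cutting out $\mathbf{F}_{\calA}(n)$, so the underlying set named in the corollary does not depend on which of the two structures decorates the codomain.

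Having identified $\chi_{os}(\uSos^{n},\uS)=\chi(\uS^{n},\uS)=\mathbf{F}_{\calA}(n)$ as one and the same Boolean semiring, I would conclude that their sets of join-irreducible elements, together with the induced order, coincide; and the latter is the hairy cube by the definition recalled above and the description in Theorem~\ref{thm:ji}. Should one wish to argue only up to isomorphism, the algebra isomorphism induces an isomorphism of the posets of join-irreducibles, and Theorem~\ref{PSSisomorphictohairycube}, which asserts that the partial order alone determines the hairy cube as a partially Stone space (the topology being discrete for finite $n$), promotes this to the required isomorphism of partially Stone spaces.

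The one delicate point --- and the step I expect to be the real obstacle --- is the claim that $E_{os}(\uSos^{n})$ is literally the same subset of $\S^{S^{n}}$ as $E(\uS^{n})$, equivalently that preservation of $\{r_{2},\lambda_{1}\}$ and preservation of $\{r_{1},r_{2},r_{3}\}$ single out exactly the same $n$-ary functions. Because $\uSos$ carries the partial operation $\lambda_{1}$ and lacks $r_{1}$ and $r_{3}$, this equality is something to be verified rather than taken for granted; it follows only from the fact that both structures are genuine dualities for $\calA$, so that each recovers $\mathbf{F}_{\calA}(n)$ through its own evaluation isomorphism. Once both counit isomorphisms are in place --- guaranteed by Theorem~\ref{optdual} and the strong-duality theorem --- the coincidence of the two hom-sets, and with it the persistence of the hairy cube, is automatic.
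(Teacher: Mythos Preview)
Your proof is correct and follows essentially the same route as the paper. The paper simply invokes Lemma~\ref{lemma:term functions}, which records that any alter ego yielding a duality on $\calA$ has, for each $n$, exactly the $n$-ary term functions of $\S$ as its morphisms $\uS'^{\,n}\to\uS'$; your free-algebra/evaluation argument is just an explicit unpacking of that lemma (itself derived from the Preduality and First Duality Theorems of \cite{Clark-Davey}), so the two arguments coincide in substance.
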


The results in this paper and in~\cite{Clouse} greatly expand our
understanding of the dual equivalence between the variety of Boolean
Semirings and the category of Partially Stone Spaces established
in~\cite{Guzman}.  They also a complete a large initial step for
investigating the strong duality we establish between $\calA$
and $\chi_{os}$.

\subsection{Notation}
\setcounter{theorem}{0}
\setcounter{corollary}{0}

\ 

Since some of the arguments in the paper are of an inductive nature,
we need a convenient notation to move back and forth between functions
$S^{n-1}\to S$ and functions $S^n\to S$.

Given $n\in\N$, $\Phi:S^n\to S$, $a\in S$, and $x\in S^{n-1}$ we
denote by $\Phi_a$ the map
\[
\begin{array}{rccc}
  \Phi_a:&S^{n-1}&\to&S \\
         &(x_1,\dots,x_{n-1})&\mapsto&\Phi(a,x_1,\dots,x_{n-1}) \\
\end{array}
\]
and by $\Phi_x$ the map
\[
\begin{array}{rccc}
  \Phi_x:&S&\to&S \\
         &a&\mapsto&\Phi(a,x_1,\dots,x_{n-1}) \\
\end{array}
\]
Given $\psi,\psi',\psi'':S^{n-1}\to S$, we denote by
$(\psi,\psi',\psi'')$ the map
\[
\renewcommand{\arraystretch}{1.2}
\begin{array}{rcccc}
  (\psi,\psi',\psi''):&S^n & \to & S \\
       &(x_1,x_2,\dots,x_n) & \mapsto & \left\{
\renewcommand{\arraystretch}{1.0}
\begin{array}{ll}
  \psi(x_2,\dots,x_n) & \text{ if } x_1=0 \\
  \psi'(x_2,\dots,x_n) & \text{ if } x_1=h \\
  \psi''(x_2,\dots,x_n) & \text{ if } x_1=1 \\
\end{array}\right.
\end{array}
\]

Note that for any $\Phi:S^n\to S$ we have
$\Phi=(\Phi_0,\Phi_h,\Phi_1)$.  In particular, when $n=1$ we may write
any $\Phi:S\to S$ as a triple of elements of $S$;  see, for example,
Lemma~\ref{StoS}.  Since $\Phi_a(x)=\Phi_x(a)$ for any $x\in S^{n-1}$
and $a\in S$, we have $\Phi_x=(\Phi_0(x),\Phi_h(x),\Phi_1(x))$.  When
$\psi:S^{n-1}\to S$ is a term function of $\S$ on $(n-1)$
variables, then $\Psi=(\psi,\psi,\psi)$ is the same term function
viewed as a term function on $n$ variables (with the first one
absent).  We call $\Psi$ {\sl the $n$-ary version of} $\psi$.

\noindent For $1 \leq i \leq n \in \N$ we denote by $\Pi^n_i$ the $i$-th
projection map

\[
\begin{array}{rcccc}
  \Pi^n_i:&S^n & \to & S \\
         &(x_1,\dots,x_n)&\mapsto &x_i\\
\end{array}
\]
For any binary relation $r\subseteq S^2$, we denote by $r^{-1}$ the
{\sl inverse} relation $\{(y,x)|(x,y)\in{r}\}$.

Given a structured topological space $\bfX$, and $\bfY\in\ScP(\bfX)$,
for any operation, partial operation or relation $\lambda$ of the
structure $\bfX$, we denote by $\lambda^\bfY$ the corresponding
operation, partial operation or relation on $\bfY$.

\begin{definition}\label{pcdl}  \cite{Guzman}
A partially complemented distributive lattice is a type
$\langle0,0,0,2,2\rangle$ algebra 
$A=\langle{A;0,h,1,\vee,\wedge}\rangle$ such that
$\langle{A;0,1,\vee,\wedge}\rangle$ is a bounded distributive lattice and
$\langle{[h,1];h,1,\vee,\wedge}\rangle$ is a complemented distributive
lattice, i.e., a Boolean algebra where $[h,1]=\{a\in{A}|h\leq{a}\leq 1\}$.
\end{definition} 

We also wish to note here that it is shown in \cite{Guzman} that 

In any partially complemented distributive lattice one can define a
unary (bar) operation in terms of the complement operation $'$ in
$[h,1]$: 
\[ {\overline{x}}=(x\vee{h})' \]

It satisfies two useful identities can be defined as in the following:

\begin{lemma}\label{barops} \cite{Guzman}
Given a partially complemented distributive lattice
$\langle{A;0,h,1,\vee,\wedge}\rangle$, the bar operation satisfies
\begin{enumerate}[L1)]
\item  $x\vee{\overline{x}}=1$, and 
\item $x\wedge{\overline{x}}=x\wedge{\overline{1}}$. 
\end{enumerate}
\end{lemma}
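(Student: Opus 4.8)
The plan is to prove both identities by working inside the Boolean algebra $[h,1]$ and exploiting the fact that the bar operation always lands there. The starting point is the observation that for any $x$ the element $x\vee h$ satisfies $h\le x\vee h\le 1$, so $x\vee h\in[h,1]$ and its complement $(x\vee h)'=\overline{x}$ is defined; in particular $\overline{x}\ge h$. I would first record the two Boolean complement laws available in $[h,1]$, namely $(x\vee h)\vee\overline{x}=1$ and $(x\vee h)\wedge\overline{x}=h$, since everything will reduce to these.

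For L1, note that $x\vee\overline{x}\ge\overline{x}\ge h$. Then I would compute
\[
(x\vee\overline{x})\vee h=x\vee\overline{x}\vee h=(x\vee h)\vee\overline{x}=1,
\]
using the first complement law. Since $x\vee\overline{x}\ge h$ already, the left-hand side equals $x\vee\overline{x}$, and L1 follows.

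For L2, I would first identify $\overline{1}$: since $1\vee h=1$ and the complement of the top element $1$ in the Boolean algebra $[h,1]$ is the bottom element $h$, we get $\overline{1}=h$. Thus L2 is the assertion $x\wedge\overline{x}=x\wedge h$, which I would prove by antisymmetry. For the direction $\le$, apply distributivity to the second complement law: $h=(x\vee h)\wedge\overline{x}=(x\wedge\overline{x})\vee(h\wedge\overline{x})=(x\wedge\overline{x})\vee h$, where the last step uses $\overline{x}\ge h$; hence $x\wedge\overline{x}\le h$, and combined with $x\wedge\overline{x}\le x$ this gives $x\wedge\overline{x}\le x\wedge h$. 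For the direction $\ge$, since $h\le\overline{x}$ we have $x\wedge h\le\overline{x}$ and trivially $x\wedge h\le x$, so $x\wedge h\le x\wedge\overline{x}$. Antisymmetry yields the identity.

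The only real subtlety, and the main thing to watch, is that $x$ itself need not belong to $[h,1]$, so the Boolean complement laws cannot be applied to $x$ directly; the whole argument is arranged so that those laws are invoked only on the elements $x\vee h$ and $\overline{x}$, both of which do lie in $[h,1]$. The supporting facts ($x\vee h\in[h,1]$, $\overline{x}\ge h$, and $\overline{1}=h$) are routine, and distributivity does the rest, so I expect no genuine obstacle beyond keeping this domain issue straight.
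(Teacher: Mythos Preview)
Your proof is correct. Note, however, that the paper does not supply its own proof of this lemma: it is stated with a citation to \cite{Guzman} and used as background, so there is nothing in the paper to compare your argument against. Your approach---reducing both identities to the Boolean complement laws for $x\vee h$ inside $[h,1]$, using $\overline{x}\ge h$ and $\overline{1}=h$, then invoking distributivity---is the natural direct verification and goes through without issue.
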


These two properties characterize partially complemented distributive
lattices. From these results it is derived that
$\bsr$ is dually equivalent to the category of Partially Stone
Spaces, $\pss$, and we denote this dual equivalence as
$\bsr\leftrightarrows\pss$.  

In $\bsr\leftrightarrows{\pss}$, the functor from $\bsr$ 
to $\pss$ takes any partially complemented distributive lattice
$A$ and maps it to $\pt(\idl(A))$, the set of prime filters of $A$. 
These prime filters are
difficult to characterize for arbitrary powers of
$\S$.  For example, if the cardinality of the indexing set is
at least countably infinite, every prime filter has an infinite
descending chain and the existence of of elements of finite support is
unclear.  Hence our current understanding of $\bsr$ 
inherent in this representation is not entirely satisfactory.

\subsection{Natural Dualities}\label{subsec:natural dualities}

\ 

In this paper we will follow very closely the ideas
of~\cite{Clark-Davey} for constructing natural dualities.  The basic
idea is to impose on the carrier $S$ of the semiring $\S$, the
discrete topology together with operations, partial operations and
relations to form a dual topological structure $\uS$ as the generator
of the dual category $\chi$.  More specifically, $\chi=\ScP(\uS)$ is
the category of isomorphic copies, topologically closed substructures
of non-empty products of copies of $\uS$.  Following this
construction, we will have a dual adjunction $\langle
D,E,e,\epsilon\rangle$ between the categories $\calA$ and $\chi$ with
the many desirable properties~\cite{Clark-Davey}*{1.5.3}.  One further
property we desire is that for any $A\in{\calA}$, $A$ is isomorphic to
$ED(A)=\chi(\calA(A,\S),\uS)$.  If the dual adjunction $\langle
D,E,e,\epsilon\rangle$ satisfies this property, it is called a dual
representation of $\calA$ in $\chi$. In this case we say that S yields
a {\sl (natural) duality} on A.  If it is also true that for any
$X\in{\chi}$, $X$ is isomorphic to $DE(X)=\calA((X,\uS),\S)$, we say
that $\uS$ yields a {\sl full duality} on $\calA$.  Thus $\uS$ yields
a full duality on $\calA$ if it yields a duality on $\calA$ which is a
dual equivalence.  Finally, if $\uS$ yields a full duality on $\calA$
and it is injective in $\chi$, $\uS$ is said to yield a {\sl strong
  duality} on $\calA$.

We will construct three dualities, each one coming from a different
topological structure.  In all three of them the algebra side of the
duality will be $\calA=\SP(\S)$.  The first topological structure, $\uS$,
yields an optimal (natural) duality $\chi=\ScP(\uS)\leftrightarrows\calA$.
The second one, $\uSs$, yields a strong duality
$\chi_s=\ScP(\uSs)\leftrightarrows\calA$.  The third one, $\uSos$,
yields an optimal strong duality
$\chi_{os}=\ScP(\uSos)\leftrightarrows\calA$. 
In the first duality,
labeling the appropriate contravariant functors $D$ and $E$, $A$ 
is isomorphic to $ED(A)=\chi(\calA(A,\S),\uS)$.  The situation is displayed
in the diagram below. 
\[\chi\leftrightarrows_{D,E}\calA=\SP(\S)\hookrightarrow{HSP(\S)}=\bsr\leftrightarrows{\pss}\]
Similar remarks hold for the other two dualities. Moreover, in
Corollary~\ref{persist} we show that $\chi_{os}(\uSosn,\uS)=\chi(\uS^n,\uS)$.

Recall that for any $\bfX\in{\ScP(\uS)}$,
$E(\bfX)=\chi(\bfX,\uS)\leqslant{\S^X}$, and 
$DE(\bfX)$ is the set of prime filters of $\chi(\bfX,\uS)$. 
As a result, we desire
that the structure placed on $\uS$ will be sufficient so that we can 
characterize the prime filters of $\chi(\bfX,\uS)$,
and thereby trace their images and the image of $\calA$ in $\pss$.  
In~\cite{Guzman} it is shown that for a finite partially complemented
distributive lattice $\calL$, the partially Stone space $[X,Y]$
corresponding to $\calL$ under the $\bsr\leftrightarrows\pss$ duality,
has $X=\calL_J$ and \mbox{$Y=\{x\in X| x \leq h\}$}.  The
topology of this space is
$\Tau=\{\phi(I)|I\in{\idl(\calL)\}}$ where
$\phi(I)=\{p\in X|p\cap{I}\neq{\emptyset}\}$ for any
$I\in{\idl(\calL)}$.  We call this topology the Stone Space
topology.  In particular, when $\calL=\chi(\uS^n,\uS)$, we identify
the prime filters of $\chi(\uS^n,\uS)$ with its join-irreducible
elements.
A major portion of this paper is devoted to characterizing the join-irreducible
elements of $\chi(\uS^n,\uS)$; we denote the set of such elements by
$\chi(\uS^n,\uS)_J$.

\section{Establishing the Duality and Some Facts About Morphisms}
\label{sec:morphisms}

First of all, we note that $t(x,y,z)=xy+yz+xz$ is a ternary
near-unanimity term on $\S$.  This property will allow us to use many
of the results in~\cite{Clark-Davey} in the construction of the dual
representations that we seek.  We further note that this property
implies that $\bsr$ is a congruence distributive variety and finite products
of $\S$ are skew-free.  Arbitrary products of
$\S$ are known to the authors not to be skew-free, but the
counterexample is outside of the scope of this paper.  

We now define the
first structure $\uS$ that we will show yields a duality on $\calA$.
We will also show that this duality 
is optimal, in the sense that if any single relation were to be deleted 
from $\uS$, duality would be lost.      

\begin{definition}\label{relations}
Define the following subsets of $\S^2$:
\begin{center}

$r_1=\S^2-\{(1,0)\};$

$r_2=\S^2-\{(h,0),(1,0)\};$

$r_3=\S^2-\{(0,1),(h,1),(1,0),(1,h)\}.$
\end{center}
Let $\uS=\langle{S;\{r_1,r_2,r_3\},\Tau}\rangle$ where $\Tau$ is the discrete
topology.
\end{definition}
The following result can be shown by straightforward counting and closure calculations. 
\begin{lemma}\label{subS2}  
  The following is the lattice of subalgebras of $\S^2$:
\begin{center}
The Lattice of Subalgebras of $\S^2$

\makebox[3.5in][c]{
\xymatrix @!C@C=-10pt @!R@R=5pt
{
 & & \S^2 \ar@{-}[dl] \ar@{-}[dr] & & \\
& r_1 \ar@{-}[dr] \ar@{-}[dl] & & r_1^{-1} \ar@{-}[dr] \ar@{-}[dl] & \\
r_2 \ar@{-}[dr] & & r_1\cap{r_1^{-1}} \ar@{-}[dl] \ar@{-}[dr]
\ar@{-}[dd] & & r_2^{-1} \ar@{-}[dl] \\
& {r_2\cap{r_1^{-1}}} \ar@{-}[dd] \ar@{-}[ddr] & & ({r_2\cap{r_1^{-1}}})^{-1} \ar@{-}[dd] \ar@{-}[ddl] \\ 
& &r_3 \ar@{-}[dl] \ar@{-}[dr] \\
& r_2 \cap{r_1^{-1}} \cap r_3 \ar@{-}[ddr] & r_2\cap{r_2^{-1}}\ar@{-}[dd] &(r_2\cap{r_1^{-1}}\cap r_3)^{-1} \ar@{-}[ddl] & \\ 
\\ 
& & \Delta & & 
}
}
\vspace{2pt}
\end{center}
\end{lemma}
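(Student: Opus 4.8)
The plan is to identify the subalgebras of $\S^2$ with the subsets of $S^2$ that are closed under the operations of $\S$, and then enumerate them directly. Since $\S$ carries the three constants $0,h,1$, every subalgebra of $\S^2$ contains the diagonal constants $(0,0),(h,h),(1,1)$; these form the image of the diagonal embedding $\S\hookrightarrow\S^2$, which is itself a subalgebra. Hence $\Delta=\{(0,0),(h,h),(1,1)\}$ is the least subalgebra, and every subalgebra $B$ is determined by the set $B\setminus\Delta$ of \emph{off-diagonal} pairs it contains, a subset of the six points $(0,h),(0,1),(h,0),(h,1),(1,0),(1,h)$. I would record at the outset that the coordinate swap $(x,y)\mapsto(y,x)$ is an automorphism of $\S^2$, so it induces an automorphism $B\mapsto B^{-1}$ of the subalgebra lattice; this is exactly the left--right reflection visible in the displayed diagram, and it halves the work.

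The core of the argument is the closure computation: for each off-diagonal point $p$ I would compute the subalgebra generated by $\Delta\cup\{p\}$ by closing under $\vee$, $\wedge$, and the bar operation of Lemma~\ref{barops} (which on $\S$ acts by $\overline{0}=\overline{h}=1$ and $\overline{1}=h$). These computations yield a short list of \emph{forcing rules}. For example, $\Delta\cup\{(0,h)\}$ and $\Delta\cup\{(h,0)\}$ are already closed and force nothing new; the set generated by $(h,1)$ equals the one generated by $(1,h)$, namely $\Delta\cup\{(h,1),(1,h)\}$, so presence of $(h,1)$ forces $(1,h)$ and conversely; and the set generated by $(0,1)$ is $\Delta\cup\{(0,1),(0,h),(h,1),(1,h)\}$, so $(0,1)$ forces $(0,h),(h,1),(1,h)$, with the mirror statement for $(1,0)$. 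Matching each generated set against Definition~\ref{relations} identifies it with a named relation, e.g. $\langle(0,h)\rangle=r_2\cap r_1^{-1}\cap r_3$, $\langle(h,1)\rangle=r_2\cap r_2^{-1}$, and $\langle(0,1)\rangle=r_2$.

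With the forcing rules in hand I would list the subsets $T$ of the six off-diagonal points consistent with them, i.e. closed under ``$(h,1)\in T\Leftrightarrow(1,h)\in T$'', ``$(0,1)\in T\Rightarrow(0,h),(h,1),(1,h)\in T$'', and its mirror image, and then verify directly that each $\Delta\cup T$ is genuinely closed under all operations. This produces exactly thirteen sets, which I would match one by one with the thirteen vertices of the diagram (for instance $\langle(0,1),(h,0)\rangle=r_1$ and $\langle(0,1),(1,0)\rangle=\S^2$), finally reading off the covering relations from inclusions of off-diagonal parts to recover the Hasse diagram. The one step demanding genuine care, rather than bookkeeping, is the completeness of the forcing rules: I must confirm that no set $T$ satisfying all the implications fails to be closed because applying $\vee$, $\wedge$, or bar to two of its points lands on a \emph{new} off-diagonal point. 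This is what pins the count at thirteen (and not fewer), and it is settled by the finite check on the few maximal candidates such as $r_1$ and $\S^2$; the coordinate-swap symmetry, together with the fact that $\vee$ and $\wedge$ are computed coordinatewise on the chain $0<h<1$, keeps that check short.
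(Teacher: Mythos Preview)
Your proposal is correct and is exactly the ``straightforward counting and closure calculations'' that the paper invokes without further detail; you have simply written out the computation that the paper leaves to the reader. The use of the PCDL operations $\vee,\wedge,\overline{\phantom{x}}$ rather than the semiring operations is harmless since the two presentations are term-equivalent, and your symmetry and forcing-rule bookkeeping is an efficient way to organise the thirteen cases.
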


Consider now the lattice of subalgebras of $\S^2$ coupled with
the following facts:
  \begin{itemize}
  \item if a morphism preserves a binary relation $r$ then it also
    preserves $r^{-1}$, and
  \item if a morphism preserves two k-ary relations $r$ and $s$ then
    it also preserves their intersection $r\cap{s}$.
  \end{itemize}
From this we see that if we use only the set of binary relations
$r_1$, $r_2$, and $r_3$ from Definition~\ref{relations} as structure,
then the structure 
$\uS$ will entail all subalgebras of $\S^2$. Hence by the
\MSDL~\cite{Clark-Davey}*{2.4.2} and the
\NUDT~\cite{Clark-Davey}*{2.3.4} we get the following proposition:

\begin{proposition}\label{duality}  
$\uS$ yields a duality on $\calA$ and $\uS$ is injective in
$\chi$. 
\end{proposition}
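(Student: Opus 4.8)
The plan is to deduce both assertions by transporting them from a larger, ``maximal'' relational alter ego onto $\uS$ through the machinery of entailment. Write $\mathbf{T}=\langle S;R_{\mathrm{all}},\Tau\rangle$ for the alter ego on $S$ whose relation set $R_{\mathrm{all}}$ consists of \emph{all} subalgebras of $\S^2$.

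First I would invoke the \NUDT~\cite{Clark-Davey}*{2.3.4}. Since $t(x,y,z)=xy+yz+xz$ is a ternary near-unanimity term on $\S$ (already noted), the theorem applies with $n=3$, and since $n-1=2$ its hypothesis calls for exactly the binary algebraic relations: the alter ego $\mathbf{T}$, carrying all subalgebras of $\S^2$, yields a duality on $\calA=\SP(\S)$ and is injective in the associated dual category $\ScP(\mathbf{T})$.

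Next I would show that $\uS$ and $\mathbf{T}$ are entailment-equivalent, i.e.\ each entails every relation of the other; this ensures that $\uS$ and $\mathbf{T}$ determine the same dual category $\chi$. One direction is immediate: $r_1,r_2,r_3$ occur as nodes in the lattice of Lemma~\ref{subS2}, so they are subalgebras of $\S^2$ and hence lie in $R_{\mathrm{all}}$, which therefore entails them. For the converse I would combine the two closure facts recorded above the proposition --- preservation of $r$ entails preservation of $r^{-1}$, and preservation of $r$ and of $s$ entails preservation of $r\cap s$ --- with the explicit description of the subalgebra lattice in Lemma~\ref{subS2}. Reading off that lattice, every subalgebra of $\S^2$ is generated from $r_1,r_2,r_3$ by inverses and intersections: the top $\S^2$ and the bottom $\Delta$ are preserved by every map and so are entailed trivially, while each intermediate node appears explicitly as such a combination (for instance $r_1^{-1}$, $r_2^{-1}$, $r_1\cap r_1^{-1}$, $r_2\cap r_1^{-1}$, $r_2\cap r_2^{-1}$, $r_2\cap r_1^{-1}\cap r_3$, together with the inverses of these). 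Hence $\{r_1,r_2,r_3\}$ entails all of $R_{\mathrm{all}}$.

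Finally I would apply the \MSDL~\cite{Clark-Davey}*{2.4.2}: two alter egos on the same underlying set that entail one another determine the same dual category and share the properties of yielding a duality and of being injective in that category. Since $\mathbf{T}$ has both properties and is entailment-equivalent to $\uS$, the structure $\uS$ inherits them, which is exactly the content of the proposition. The part I expect to require the most care is the entailment check of the previous paragraph: verifying node by node, across the whole lattice of Lemma~\ref{subS2}, that each subalgebra of $\S^2$ really is obtained from $r_1,r_2,r_3$ under inverse and intersection. A secondary point to pin down is precisely which of the two cited results delivers \emph{injectivity} of the maximal structure $\mathbf{T}$, and the confirmation that injectivity --- and not merely dualizability --- is among the properties transported by the \MSDL.
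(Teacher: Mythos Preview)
Your proposal is correct and follows essentially the same route as the paper: show that $\{r_1,r_2,r_3\}$ entails every subalgebra of $\S^2$ via closure under inverses and intersections (using the explicit lattice of Lemma~\ref{subS2}), and then combine the \NUDT\ with the \MSDL. The paper compresses this into a single sentence before the statement of the proposition, whereas you spell out the intermediate maximal alter ego $\mathbf{T}$ and the two-sided entailment explicitly; your closing caveats about injectivity are points the paper simply takes as read from the cited results.
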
  

The next lemma determines the elements of 
$\chi(\uS,\uS)\leqslant{\S}^S$. The proof is a 
simple verification of which functions
$f:S\longrightarrow{S}$ preserves the relations of $\uS$.

\begin{lemma}\label{StoS}  
The following is the lattice $\chi(\uS,\uS)$: 

\begin{center}
The Lattice $\chi(\uS,\uS)$
\[
\xy <0mm,0mm>;<1.2mm,0mm>:
(0,0)="a"*{\bullet};
(0,10)="b"**@{-}*{\bullet};
(0,20)="c"**@{-}*{\bullet};
(0,30)="d"**@{-}*{\bullet};
(0,40)="e"**@{-}*{\bullet};
"b";(-10,20)="f"**@{-}*{\bullet};
"f";"d"**@{-};
"c";(10,30)="g"**@{-}*{\bullet};
"g";"e"**@{-};
"a"*!CR{\mbox{\upshape\small(0,0,0)}\ \ };
"b"*!CR{\mbox{\upshape\small(0,h,h)}\ \ };
"c"*!CL{\ \ \mbox{\upshape\small(h,h,h)}};
"d"*!CR{\mbox{\upshape\small(h,h,1)\ \ }};
"e"*!CL{\ \ \mbox{\upshape\small(1,1,1)}};
"f"*!CR{\mbox{\upshape\small(0,h,1)\ \ }};
"g"*!CL{\ \ \mbox{\upshape\small(1,1,h)}};
\endxy
\]
\vspace{0pt}
\end{center}
\end{lemma}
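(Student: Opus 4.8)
The plan is to determine all maps $f \in \chi(\uS,\uS)$ directly, i.e. all functions $f \colon S \to S$ that preserve each of $r_1$, $r_2$, $r_3$ (equivalently, by the remarks preceding Proposition~\ref{duality}, all subalgebras of $\S^2$). Since $S = \{0,h,1\}$ has only three elements, there are exactly $27$ functions $S \to S$ to screen, and I would organize the work around the preservation conditions rather than brute force. Writing $f = (f(0), f(h), f(1))$ as a triple (the notation introduced just before Lemma~\ref{StoS}), the goal is to show the surviving triples are exactly the seven listed, and then to verify the displayed Hasse diagram.

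First I would record what preservation means concretely. A unary map $f$ preserves a binary relation $r \subseteq S^2$ iff for all $(x,y) \in r$ we have $(f(x), f(y)) \in r$. The cheapest conditions come from the diagonal-type constraints: preserving $r_1 = S^2 \setminus \{(1,0)\}$ forces that we never map a pair in $r_1$ onto $(1,0)$, and similarly for $r_2$ and $r_3$. I would first extract the monotonicity consequences. Note that $r_1$ contains every pair except $(1,0)$; in particular it contains $(0,1)$, $(h,1)$, $(0,h)$, and all the ``$\le$'' pairs, so preserving $r_1$ says precisely that $f(x) = 1$ and $f(y) = 0$ cannot occur for any pair $(x,y) \ne (1,0)$. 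The most efficient route is: preserving $r_1$ together with $r_1^{-1}$ (automatic, by the first bulleted fact) forces $f$ to be order-preserving for the order in which $0 < h < 1$ fails only at the single missing pair, and then $r_2$, $r_3$ cut down the remaining freedom. I would tabulate, for each candidate value of $f(0), f(h), f(1)$, which of the three relations is violated, eliminating triples in bulk (e.g.\ any $f$ with $f(0) = 1$ is immediately killed by the pair $(0,1) \in r_1$ mapping to $(1, f(1))$, etc.).

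The surviving list $(0,0,0)$, $(0,h,h)$, $(h,h,h)$, $(h,h,1)$, $(1,1,1)$, $(0,h,1)$, $(1,1,h)$ I would confirm by checking each one preserves all three relations; these checks are short and finite. I would then observe the lattice structure: $\chi(\uS,\uS)$ is a sublattice of $\S^S$ under the pointwise order induced by $0 < h < 1$ on $S$, so meets and joins are computed coordinatewise, and the Hasse diagram is read off by comparing the seven triples coordinatewise. For instance $(0,h,h)$ and $(h,h,h)$ are incomparable neither — one checks $(0,h,h) \le (h,h,h)$ — while $(0,h,h)$ and $(h,h,1)$ join to $(h,h,1)$ and meet to $(0,h,h)$, giving the two ``diamond-free'' side branches shown.

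The main obstacle is not any single deep step but the bookkeeping: making the $27$-case elimination genuinely systematic so that no valid map is dropped and no invalid map is retained. I would guard against error by cross-checking the count against Lemma~\ref{subS2}: under the Galois correspondence between subalgebras of $\S^2$ and unary morphisms, each $f \in \chi(\uS,\uS)$ corresponds to its graph $\{(x, f(x)) : x \in S\}$, which must be a subalgebra of $\S^2$, so the seven maps should sit compatibly inside the subalgebra lattice already computed. This consistency check is what turns the ``simple verification'' into a reliable one.
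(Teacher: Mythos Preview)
Your main approach matches the paper's exactly: the proof is a direct finite verification of which maps $f:S\to S$ preserve $r_1$, $r_2$, $r_3$, followed by reading off the pointwise order among the seven survivors.

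One correction, however: your proposed cross-check via graphs is mistaken. The graph $\{(x,f(x)):x\in S\}$ of a map $f\in\chi(\uS,\uS)$ need \emph{not} be a subalgebra of $\S^2$. For instance the constant map $f=(0,0,0)$ has graph $\{(0,0),(h,0),(1,0)\}$, which does not contain the constant $(h,h)$ and so is not a subalgebra; indeed by Proposition~\ref{proj} the only algebra homomorphism $\S\to\S$ is the identity. Morphisms in $\chi$ preserve the \emph{relational} structure $r_1,r_2,r_3$, not the semiring operations; the role of the subalgebras of $\S^2$ is that they \emph{are} the relations to be preserved, not that morphism graphs sit among them. This does not affect your main argument, but the cross-check should be dropped.
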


Before we investigate the lattice
$\chi(\uS^n,\uS)_J$, 
we want to show that the duality yielded by $\uS$ on $\calA$ is optimal. 
From the \PDT~\cite{Clark-Davey}*{1.5.2} and the
\FDT~\cite{Clark-Davey}*{2.2.2} we get the following lemma. 

\begin{lemma}\label{lemma:term functions}

  If $\uS'$ is a a structure that yields duality on $\calA$, then the
  finitary term functions on $\S$ must be exactly the morphisms from
  finite powers of $\uS'$ into $\uS'$.  Therefore, for any $n\in\N$
  \[ \chi'({\uS'}^n,\uS') = \chi({\uS}^n,\uS). \]

\end{lemma}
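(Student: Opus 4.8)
The plan is to identify both sides of the asserted equation with the clone of finitary term functions of $\S$, using the free algebra on $n$ generators. Write $F=F_{\calA}(n)$ for the free algebra on $n$ generators in $\calA=\SP(\S)$. Since $\calA$ is generated by $\S$, this free algebra is (isomorphic to) the subalgebra of $\S^{S^n}$ generated by the projections $\Pi^n_1,\dots,\Pi^n_n$, and its elements are exactly the $n$-ary term functions of $\S$. Thus the lemma reduces to showing $\chi'({\uS'}^n,\uS')=F$ as subsets of $\S^{S^n}$, and likewise $\chi({\uS}^n,\uS)=F$; the two hom-structures then coincide. Note that $\uS$ itself yields a duality by Proposition~\ref{duality}, so the displayed equality follows once the claim $\chi'({\uS'}^n,\uS')=F$ is established for an arbitrary structure yielding a duality.

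First I would record the easy inclusion $F\subseteq\chi'({\uS'}^n,\uS')$. By the \PDT~\cite{Clark-Davey}*{1.5.2}, every operation, partial operation, and relation in the structure of $\uS'$ must be compatible with (algebraic over) $\S$; consequently each $n$-ary term function preserves all of this structure, and it is automatically continuous for the discrete topology, so it is a morphism ${\uS'}^n\to\uS'$. For the reverse inclusion I would compute the dual adjunction on $F$. By freeness, a homomorphism $\varphi\colon F\to\S$ is determined by the images of the generators, so $D(F)=\calA(F,\S)$ is carried bijectively onto $S^n$ by $\varphi\mapsto(\varphi(\Pi^n_1),\dots,\varphi(\Pi^n_n))$, and the structure induced on $D(F)$ from ${\uS'}^{F}$ is precisely that of ${\uS'}^n$. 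Hence $ED(F)=\chi'({\uS'}^n,\uS')$ as substructures of $\S^{S^n}$.

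The decisive step is to trace the evaluation map $e_F\colon F\to ED(F)$ through these identifications. For a term function $t\in F$, $e_F(t)$ is the map $\varphi\mapsto\varphi(t)$ on $D(F)$; under $\varphi\leftrightarrow(a_1,\dots,a_n)$ the homomorphism $\varphi$ is evaluation at $(a_1,\dots,a_n)$, so $e_F(t)$ sends $(a_1,\dots,a_n)$ to $t(a_1,\dots,a_n)$. In other words, $e_F$ carries each term function $t$ to itself, viewed as an element of $\chi'({\uS'}^n,\uS')\leqslant\S^{S^n}$. Since $\uS'$ yields a duality, $e_F$ is an isomorphism and in particular surjective, the reduction of duality to this surjectivity on free algebras being supplied by the \FDT~\cite{Clark-Davey}*{2.2.2}. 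Therefore $\chi'({\uS'}^n,\uS')=\mathrm{im}(e_F)=F$, the set of $n$-ary term functions. Applying the identical argument to $\uS$ gives $\chi({\uS}^n,\uS)=F$, and the two agree.

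The main obstacle is the bookkeeping in this last step: one must check carefully that the several identifications — $D(F)\cong{\uS'}^n$, the dual of a free algebra as a power of $\uS'$, and the evaluation map — are mutually compatible, so that $e_F$ really is the identity on underlying functions $S^n\to S$. Once that compatibility is pinned down, the conclusion is immediate from the surjectivity of $e_F$, and the passage from individual arities $n$ to the full statement about all finitary term functions is a union over $n$.
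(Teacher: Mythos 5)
Your argument is correct and is essentially the paper's own: the paper derives this lemma in one line from the \PDT~\cite{Clark-Davey}*{1.5.2} (giving the inclusion of term functions among morphisms) and the \FDT~\cite{Clark-Davey}*{2.2.2} (giving surjectivity of the evaluation map on free algebras, hence the reverse inclusion), which is exactly the route you spell out in detail.
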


\begin{theorem}\label{optdual}
  The structure $\uS=\langle{S;\{r_1,r_2,r_3\},\Tau}\rangle$
  yields an optimal natural duality on $\calA$.
\end{theorem}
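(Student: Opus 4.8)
The plan is to take the duality itself as given---it is exactly Proposition~\ref{duality}---and to concentrate entirely on optimality, i.e.\ on showing that deleting any one of $r_1,r_2,r_3$ destroys the duality. The organizing principle is the standard fact from the theory of entailment in~\cite{Clark-Davey}: a relation may be dropped from a duality-yielding structure without losing the duality precisely when it is entailed by the relations that remain. So for each $i$ I would show that $r_i$ is \emph{not} entailed by the other two, and the three cases split into two genuinely different kinds.

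For $r_2$ and $r_3$ the obstruction is already visible at the level of total unary maps, so I would invoke Lemma~\ref{lemma:term functions}: if the reduced structure dualized, its unary morphisms would coincide with the unary term functions of $\S$, namely the seven maps of Lemma~\ref{StoS}. But the map $(h,0,0)$ preserves both $r_1$ and $r_3$ (it never takes the value $1$, so it can never produce the excluded pairs of $r_1$ or of $r_3$, all of which contain a $1$) while sending $(0,h)\in r_2$ to $(h,0)\notin r_2$; dually $(1,h,h)$ preserves $r_1$ and $r_2$ (it never takes the value $0$) while sending $(0,h)\in r_3$ to $(1,h)\notin r_3$. Neither map occurs in Lemma~\ref{StoS}, so each exhibits a morphism of the reduced structure that is not a term function, contradicting Lemma~\ref{lemma:term functions}. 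Hence dropping $r_2$, or dropping $r_3$, loses the duality.

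The case of $r_1$ is the main obstacle, and it is of a different nature. Here no total operation can witness non-entailment, because $r_1$ is the relational composite $r_3\circ r_2$: one checks directly that $(x,y)\in r_1$ iff $\exists z\,[(x,z)\in r_3 \wedge (z,y)\in r_2]$, so every total operation preserving $r_2$ and $r_3$ automatically preserves $r_1$. In particular the full morphism clone of $\langle S;\{r_2,r_3\},\Tau\rangle$ agrees with that of $\uS$, and Lemma~\ref{lemma:term functions} is powerless. The resolution is that entailment, unlike this primitive-positive definition, must survive passage to substructures, and the existential witness in $r_3\circ r_2$ need not lie in a given substructure. Concretely, I would take the two-element substructure $\bfY=\{(h,1),(0,h)\}\leqslant\uS^2$ and the map $g\colon\bfY\to\uS$ given by $g(h,1)=1$ and $g(0,h)=0$. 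A routine check shows $g$ preserves $r_2^\bfY$ and $r_3^\bfY$: the only non-reflexive related pair in either relation is $\big((0,h),(h,1)\big)\in r_2^\bfY$, which $g$ sends to $(0,1)\in r_2$, while $r_3^\bfY$ has no non-reflexive pairs at all. However the ordered pair $\big((h,1),(0,h)\big)$ lies in $r_1^\bfY$ (its coordinates are $(h,0)$ and $(1,h)$, both in $r_1$), and $g$ sends it to $(1,0)\notin r_1$. Thus $g$ is a morphism of $\langle S;\{r_2,r_3\},\Tau\rangle$ that fails to preserve $r_1$, so $r_1$ is not entailed by $\{r_2,r_3\}$; together with the \NUDT{} and the entailment machinery of~\cite{Clark-Davey}, and since $r_1$ is one of the subalgebras of $\S^2$ from Lemma~\ref{subS2}, this shows that removing $r_1$ loses the duality.

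The hard part, then, is precisely recognizing that $r_1$ demands a \emph{partial} witness: the naive search among total operations---and hence Lemma~\ref{lemma:term functions}---fails exactly because $r_1=r_3\circ r_2$, and one must instead locate a substructure from which the existential witness $z=0$ for the pair $(h,0)\in r_1$ has been deleted. Once the substructure $\bfY$ above is identified the verification is routine; finding it is the only real step.
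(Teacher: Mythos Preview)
Your proof is correct and follows essentially the same strategy as the paper: for $r_2$ and $r_3$ you exhibit total unary maps preserving the remaining two relations but not the dropped one (you use $(h,0,0)$ and $(1,h,h)$, the paper uses $(0,0,h)$ and $(1,h,1)$), and for $r_1$ you pass to a two-element substructure and a partial map (you use $\{(h,1),(0,h)\}$, the paper uses $\{(h,0),(0,1)\}$), invoking the Entailment Lemma in either case. Your additional observation that $r_1=r_3\circ r_2$, explaining \emph{why} no total map can witness the failure for $r_1$, is a nice piece of motivation that the paper omits.
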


\begin{proof}
Using Lemmas \ref{StoS} and~\ref{lemma:term functions} 
we can show that if
we eliminate $r_2$ or $r_3$ from $\uS$ to form $\uS'$, duality will be lost. The
map $(1,h,1)$ preserves $r_1$ and $r_2$,  but not $r_3$.  The map
$(0,0,h)$ preserves $r_1$ and $r_3$, but not $r_2$.

Now let $\uS'=\langle{S;\{r_2,r_3\},\Tau}\rangle$,
$\chi'=\ScP(\uS')$ and suppose that $\uS'$
yields a duality on $\calA$.  By the \DET~\cite{Clark-Davey}*{2.4.3},
$\{r_2,r_3\}$ must entail $r_1$.  By the \EL~\cite{Clark-Davey}*{2.4.4}, for any
$X\subseteq{S^2}$ and $\alpha\in{\chi'(X,\uS')}$, $\alpha$ must
preserve $r_1$.  Let $X=\{(h,0),(0,1)\}$ and
define $\alpha:X\longrightarrow{\uS}$ by $\alpha((h,0))=1$ and
$\alpha((0,1))=0$.  Then $\alpha$ preserves $r_2$ and $r_3$, but not
$r_1$.  Therefore, we cannot retain duality without $r_1$.
\end{proof}

Upon viewing Lemma~\ref{StoS}, the \PDT\ and the \FDT,
noting that $h$ is a nullary operation of $\S$ and recalling that relations are 
defined pointwise, it is easy to see that Lemma~\ref{construct} holds.
This lemma gives some recursive information about $\chi(\uS^n,\uS)$,
just enough for our needs.

\begin{lemma}\label{construct} Let $n > 1$.
  \begin{enumerate}
  \item Given $\Phi\in\chi(\uS^n,\uS)$ and $x\in S^{n-1}$, we have
    $\Phi_x\in\chi(\uS,\uS)$.
  \item Given $\Phi\in\chi(\uS^n,\uS)$, and $a\in S$, we have
    $\Phi_a\in\chi(\uS^{n-1},\uS)$, and 
    \begin{enumerate}
    \item $\Phi_0\join(\Phi_1\meet h)=\Phi_h$, hence  $\Phi_0\leq\Phi_h$.
    \item $\Phi_0\meet h \leq \Phi_1$.  \\
    Moreover,
    \item
      If $\Phi\leq{h}$ then $\Phi_h=\Phi_1$.  
    \item
      If $\Phi\nleq{h}$ then for every $x\in S^{n-1}$  either
      $\Phi_{0}(x)\leq\Phi_{h}(x)\leq\Phi_{1}(x)$ or
      $\Phi_{0}(x)=\Phi_{h}(x)=1$ and $\Phi_{1}(x)=h$.
    \end{enumerate}
  \item Given $\psi\in\chi(\uS^{n-1},\uS)$, let $\Psi=(\psi,\psi,\psi)$ be
    the $n$-ary version of $\psi$.   We have that
    $\Psi\wedge{h}\wedge{\Pi_1}$,\ 
    $\Psi\wedge{h}\wedge{\overline{\Pi_1}}$,\  
    $\Psi\wedge{\Pi_1}$,\ 
    $\Psi\wedge{\overline{\Pi_1}}\in{\chi(\uS^n,\uS)}$, and 
    \begin{enumerate}
    \item $\Psi\wedge{h}\wedge{\Pi_1}=(0,\psi\wedge{h},\psi\wedge{h})$
    \item $\Psi\wedge{h}\wedge{\overline{\Pi_1}}=(\psi\wedge{h},\psi\wedge{h},
      \psi\wedge{h})$
    \item $\Psi\wedge{\Pi_1}=(0,\psi\wedge{h},\psi)$ 
    \item $\Psi\wedge{\overline{\Pi_1}}=(\psi,\psi,\psi\wedge{h})$ 
    \end{enumerate}
  \end{enumerate}

\end{lemma}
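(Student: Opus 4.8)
The plan is to exploit three structural facts and then reduce every numerical assertion to the seven-element description of $\chi(\uS,\uS)$ in Lemma~\ref{StoS}: first, each relation $r_1,r_2,r_3$ of Definition~\ref{relations} contains the diagonal $\Delta$; second, morphisms compose and a map into a power of $\uS$ is a morphism exactly when each of its components is; and third, the relations on the powers $\uS^k$ and the lattice operations on the hom-sets $\chi(\uS^k,\uS)$ are all computed coordinatewise. For part~(1) I would write $\Phi_x=\Phi\circ\iota_x$, where $\iota_x\colon\uS\to\uS^n$ sends $a$ to $(a,x_1,\dots,x_{n-1})$. Its first component is the identity and its remaining components are the constant maps at the $x_j$; since $\Delta\subseteq r_i$ for $i=1,2,3$, each constant map preserves all three relations, so $\iota_x$ is a morphism and hence so is the composite $\Phi_x\in\chi(\uS,\uS)$.

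Part~(2) starts in the same way: $\Phi_a=\Phi\circ\iota_a'$, where $\iota_a'\colon\uS^{n-1}\to\uS^n$ sends $(x_1,\dots,x_{n-1})$ to $(a,x_1,\dots,x_{n-1})$, a morphism because its components are a single constant followed by projections; thus $\Phi_a\in\chi(\uS^{n-1},\uS)$. The four relations (a)--(d) among $\Phi_0,\Phi_h,\Phi_1$ are then checked pointwise: fixing $x\in S^{n-1}$, part~(1) tells us $\Phi_x\in\chi(\uS,\uS)$, so the triple $(\Phi_0(x),\Phi_h(x),\Phi_1(x))$ is one of the seven entries of Lemma~\ref{StoS}, and I would simply run each claim down that list. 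I expect (a) and (b) to hold for all seven triples; (c) uses that $\Phi\leq h$ forces every $\Phi_x$ into $\{(0,0,0),(0,h,h),(h,h,h)\}$, each of which satisfies $\Phi_x(h)=\Phi_x(1)$; and in (d) every one of the seven triples already satisfies the stated dichotomy---the six monotone triples fall into the first alternative and $(1,1,h)$ into the second---so strictly speaking the hypothesis $\Phi\nleq h$ is not needed for the pointwise conclusion and merely records the case complementary to (c).

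For part~(3) I would first settle membership in $\chi(\uS^n,\uS)$. The $n$-ary version $\Psi=(\psi,\psi,\psi)$ is the composite of $\psi$ with the projection $\uS^n\to\uS^{n-1}$ that forgets the first coordinate, hence a morphism; $\Pi_1$ is a projection, and $\overline{\Pi_1}$ is obtained from it by the bar operation, which is a (term) operation on every partially complemented distributive lattice (Lemma~\ref{barops}). Since $\chi(\uS^n,\uS)=E(\uS^n)$ is an object of $\calA$ and therefore a subalgebra of $\S^{S^n}$ closed under $\wedge$, the constant $h$, and bar, each of the four expressions is a meet of morphisms and so lies in $\chi(\uS^n,\uS)$. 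The explicit triple formulas then drop out by evaluating the first variable at $0,h,1$ and using $\bar 0=\bar h=1$, $\bar 1=h$, together with $s\wedge 0=0$ and $s\wedge 1=s$; for example $\Psi\wedge\overline{\Pi_1}$ sends $(a,x)$ to $\psi(x)\wedge\overline a$, which equals $\psi(x)$ for $a\in\{0,h\}$ and $\psi(x)\wedge h$ for $a=1$, giving the triple $(\psi,\psi,\psi\wedge h)$ of (d); the other three are identical bookkeeping.

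The only step that is more than bookkeeping is the closure assertion hidden in part~(3): that $\overline{\Pi_1}$, and hence each stated meet, actually belongs to $\chi(\uS^n,\uS)$. I would justify this precisely by recalling that $\chi(\uS^n,\uS)$ lies in $\calA\subseteq\bsr$, so it is a partially complemented distributive lattice on which the bar operation $\overline x=(x\vee h)'$ of Lemma~\ref{barops} is defined and preserved; granting that, the remainder of the lemma is the routine coordinatewise verification against Lemma~\ref{StoS} sketched above.
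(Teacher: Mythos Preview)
Your proof is correct and follows essentially the same route the paper sketches: the paper's one-line justification (``Upon viewing Lemma~\ref{StoS}, the \PDT\ and the \FDT, noting that $h$ is a nullary operation of $\S$ and recalling that relations are defined pointwise'') is exactly your reduction of (2)(a)--(d) to the seven triples of Lemma~\ref{StoS}, together with the observation that $\chi(\uS^n,\uS)=E(\uS^n)\in\calA$ is a subalgebra of $\S^{S^n}$ and hence closed under $h$, $\wedge$, and the bar operation. Your explicit factorizations $\Phi_x=\Phi\circ\iota_x$ and $\Phi_a=\Phi\circ\iota_a'$ just make precise what the paper means by ``relations are defined pointwise,'' and your identification of the bar-closure step as the only substantive point matches the paper's invocation of the \PDT/\FDT\ (equivalently Lemma~\ref{lemma:term functions}: since $(1,1,h)\in\chi(\uS,\uS)$, bar is a unary term function on $\S$).
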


We now have enough results to begin our work characterizing
$\chi(\uS^n,\uS)_J$.  
We begin with a simple but useful Corollary to Lemma~\ref{StoS}.

\begin{corollary}\label{StoSj}
The following diagram is the poset $\chi(\uS,\uS)_J$: 
\begin{center}
The Poset $\chi(\uS,\uS)_J$
\[
\xy <0mm,0mm>;<1.2mm,0mm>:
(0,10)="b"*{\bullet};
(0,20)="c"**@{-}*{\bullet};
"b";(-10,20)="f"**@{-}*{\bullet};
"c";(10,30)="g"**@{-}*{\bullet};
"g";
"b"*!CR{\mbox{\upshape\small(0,h,h)}\ \ };
"c"*!CL{\ \ \mbox{\upshape\small(h,h,h)}};
"f"*!CR{\mbox{\upshape\small(0,h,1)\ \ }};
"g"*!CL{\ \ \mbox{\upshape\small(1,1,h)}};
\endxy
\]
\vspace{0pt}
\end{center}
\end{corollary}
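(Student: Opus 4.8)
The plan is to read the join-irreducible elements directly off the Hasse diagram of the seven-element lattice $\chi(\uS,\uS)$ computed in Lemma~\ref{StoS}. Since that lattice is finite, an element is join-irreducible exactly when it is not the bottom and has a unique lower cover; equivalently, it cannot be written as the join of two strictly smaller elements. So the first step is to record, for each of the seven elements, its lower covers as given by the edges of the diagram in Lemma~\ref{StoS}.

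Carrying this out, the bottom element $(0,0,0)$ is excluded by definition. The elements $(h,h,1)$ and $(1,1,1)$ each have two lower covers: $(h,h,1)$ covers both $(h,h,h)$ and $(0,h,1)$, and $(1,1,1)$ covers both $(h,h,1)$ and $(1,1,h)$. Hence each equals the join of its two lower covers and fails to be join-irreducible. The remaining four elements, $(0,h,h)$, $(h,h,h)$, $(0,h,1)$, and $(1,1,h)$, each sit immediately above a single element in the diagram, so each has a unique lower cover and is therefore join-irreducible. This yields precisely the four-element set displayed in the corollary.

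It then remains to confirm that the order induced on these four elements is the one pictured. Because $\chi(\uS,\uS) \leqslant \S^S$ carries the pointwise order, comparisons are decided coordinatewise on the triples in the three-element chain $S$. Thus $(0,h,h)$ lies below all three of the others, $(h,h,h) \leq (1,1,h)$, while the pairs $\{(0,h,1),(h,h,h)\}$ and $\{(0,h,1),(1,1,h)\}$ are incomparable; for instance $(0,h,1)$ and $(h,h,h)$ disagree in the first coordinate, where $0 < h$, and in the third, where $1 > h$. These relations reproduce exactly the covers drawn in the corollary's diagram.

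There is essentially no obstacle here: the only care required is to extract the lower covers correctly from the Hasse diagram of Lemma~\ref{StoS} and to check the two incomparabilities via the pointwise order, which is what separates the four genuine join-irreducibles from the two elements $(h,h,1)$ and $(1,1,1)$ that are removed.
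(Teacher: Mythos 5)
Your proposal is correct and is exactly the argument the paper intends: the corollary is stated without proof as an immediate consequence of the lattice diagram in Lemma~\ref{StoS}, obtained by discarding the bottom $(0,0,0)$ and the two elements $(h,h,1)$ and $(1,1,1)$ with more than one lower cover, then reading off the induced pointwise order on the remaining four elements. Your identification of the join-irreducibles and the two incomparabilities matches the displayed poset.
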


\begin{proposition}\label{prop:ji up down} Let $n > 1$.
  \begin{enumerate}
  \item If $\psi\in\chi(\uS^{n-1},\uS)_J$ and $\Psi=(\psi,\psi,\psi)$ is
    the $n$-ary version  of $\psi$,
    then $\Psi\meet \Pi_1=(0,\psi\meet h,\psi)$ and
    $\Psi\meet\overline{\Pi_1}=(\psi,\psi,\psi\meet h)$ are
    in $\chi(\uS^n,\uS)_J$. 
  \item If  $\Phi\in\chi(\uS^n,\uS)_J$ with $\Phi\leq h$, then there is
    $\psi\in\chi(\uS^{n-1},\uS)_J$ with $\psi\leq h$ such that 
    $\Phi=(0,\psi,\psi)=\Psi\meet\Pi_1$ or
    $\Phi=(\psi,\psi,\psi)=\Psi\meet\overline{\Pi_1}$, where 
    $\Psi=(\psi,\psi,\psi)$ is the $n$-ary version of $\psi$.
  \item If $\Phi\in\chi(\uS^n,\uS)_J$ with $\Phi\nleq h$, then there is
    $\psi\in\chi(\uS^{n-1},\uS)_J$ with $\psi\nleq h$ such that 
    $\Phi=(0,\psi\meet h,\psi)=\Psi\meet\Pi_1$ or
    $\Phi=(\psi,\psi,\psi\meet h)=\Psi\meet\overline{\Pi_1}$, where 
    $\Psi=(\psi,\psi,\psi)$ is the $n$-ary version of $\psi$.
  \end{enumerate}
\end{proposition}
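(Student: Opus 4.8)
The plan is to work throughout with the coordinate description $\Phi=(\Phi_0,\Phi_h,\Phi_1)$ from the Notation section, to use that the lattice operations on $\chi(\uS^n,\uS)$ are computed pointwise (hence coordinatewise here), and to apply the definition of join-irreducibility directly: a nonzero $p$ is join-irreducible iff $p=\alpha\join\beta$ forces $p=\alpha$ or $p=\beta$. The engine of the whole argument is Lemma~\ref{construct}, especially the identity $\Phi_0\join(\Phi_1\meet h)=\Phi_h$ of (2a) and the inequality $\Phi_0\meet h\leq\Phi_1$ of (2b), which recover a missing coordinate from the other two; membership of the auxiliary elements I build in $\chi(\uS^n,\uS)$ comes from Lemma~\ref{construct}(3).

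For part (1), to see that $\Psi\meet\Pi_1=(0,\psi\meet h,\psi)$ is join-irreducible I would take any decomposition $\Psi\meet\Pi_1=\Phi\join\Phi'$. The first coordinate gives $\Phi_0\join\Phi_0'=0$, so $\Phi_0=\Phi_0'=0$; the last coordinate gives $\Phi_1\join\Phi_1'=\psi$, and since $\psi$ is join-irreducible I may assume $\Phi_1=\psi$. Then Lemma~\ref{construct}(2a) forces $\Phi_h=\Phi_0\join(\Phi_1\meet h)=\psi\meet h$, so $\Phi=(0,\psi\meet h,\psi)=\Psi\meet\Pi_1$. The element $\Psi\meet\overline{\Pi_1}=(\psi,\psi,\psi\meet h)$ is handled the same way: a decomposition forces, via the first coordinate together with join-irreducibility of $\psi$ and the monotonicity $\Phi_0\leq\Phi_h$ from (2a), the equalities $\Phi_0=\Phi_h=\psi$; then (2a) and (2b) sandwich the last coordinate as $\psi\meet h=\Phi_0\meet h\leq\Phi_1\leq\psi\meet h$, giving $\Phi_1=\psi\meet h$.

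For the converse statements (2) and (3), given a join-irreducible $\Phi$ I would exhibit an explicit two-term join and let join-irreducibility collapse $\Phi$ onto one summand of the required shape. When $\Phi\leq h$, Lemma~\ref{construct}(2c) gives $\Phi_h=\Phi_1$ and (2a) gives $\Phi_0\leq\Phi_h$, so $\Phi=(\Phi_0,\Phi_h,\Phi_h)$, and I would use $\Phi=(0,\Phi_h,\Phi_h)\join(\Phi_0,\Phi_0,\Phi_0)$; here $(\Phi_0,\Phi_0,\Phi_0)$ is the $n$-ary version of $\Phi_0$, and $(0,\Phi_h,\Phi_h)=(0,\Phi_h\meet h,\Phi_h)$ (since $\Phi_h\leq h$) is the meet of $\Pi_1$ with the $n$-ary version of $\Phi_h$, both in $\chi(\uS^n,\uS)$ by Lemma~\ref{construct}(3). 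When $\Phi\nleq h$, the correct decomposition is $\Phi=(0,\Phi_1\meet h,\Phi_1)\join(\Phi_0,\Phi_0,\Phi_0\meet h)$, where the middle coordinate reconstructs as $\Phi_h$ exactly by (2a) and the last as $\Phi_1$ exactly by (2b); the first summand is the meet of $\Pi_1$ with the $n$-ary version of $\Phi_1$, the second the meet of $\overline{\Pi_1}$ with the $n$-ary version of $\Phi_0$, again in the lattice by Lemma~\ref{construct}(3). In each case join-irreducibility of $\Phi$ selects one summand, which is precisely $\Psi\meet\Pi_1$ or $\Psi\meet\overline{\Pi_1}$ for the indicated $\psi$ (namely $\Phi_h$ or $\Phi_0$ in part (2), and $\Phi_1$ or $\Phi_0$ in part (3)); the side conditions $\psi\leq h$, respectively $\psi\nleq h$, follow by tracking which coordinate can carry the value $1$.

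It remains to verify that the extracted $\psi$ is itself join-irreducible in $\chi(\uS^{n-1},\uS)$, and this is the step I expect to be the main bookkeeping obstacle. The idea is to lift a hypothetical decomposition $\psi=\mu\join\nu$ back up: applying the same construction to $\mu$ and $\nu$ and using distributivity of the lattice, for instance $(\mu\join\nu)\meet h=(\mu\meet h)\join(\nu\meet h)$, one rewrites $\Phi$ as a join of the two corresponding elements of $\chi(\uS^n,\uS)$, so that join-irreducibility of $\Phi$ forces $\psi=\mu$ or $\psi=\nu$. The only real care needed is to match the construction ($\meet\Pi_1$ versus $\meet\overline{\Pi_1}$) to the shape of $\Phi$ found above and to confirm each lifted summand lies in the lattice by Lemma~\ref{construct}(3); the coordinatewise verifications are then routine.
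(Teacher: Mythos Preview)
Your proposal is correct and follows essentially the same approach as the paper: the same explicit two-term decompositions in parts (2) and (3), and the same lifting of a putative decomposition of $\psi$ via Lemma~\ref{construct}(3) to verify join-irreducibility of $\psi$. Your treatment of part (1) is in fact slightly cleaner than the paper's: you use the identity $\Phi_h=\Phi_0\join(\Phi_1\meet h)$ from Lemma~\ref{construct}(2a) (and $\Phi_0\meet h\leq\Phi_1$ from (2b)) to pin down the remaining coordinate directly, whereas the paper reaches the same conclusion by a pointwise contradiction argument that appeals to the explicit list of maps in Lemma~\ref{StoS}.
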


\begin{proof}
  1.  Assume $\psi\in\chi(\uS^{n-1},\uS)$ is  join-irreducible. \\
  Consider first the case
  $\Phi=(0,\psi\meet h,\psi)$, and  suppose
  $\Phi=\Gamma\join\Delta$ with $\Gamma,\Delta\in\chi(\uS^n,\uS)$ and
  $\Gamma,\Delta < \Phi$.  We have
  \[ \Gamma_0=\Delta_0=0, \quad
     \Gamma_h\join\Delta_h=\psi\meet h, \quad
     \Gamma_1\join\Delta_1=\psi. \]
  \Wlog, we have $\Gamma_1=\psi$ and therefore $\Gamma_h < \psi\meet
  h$, i.e. there is $x\in S^{n-1}$ such that 
  \[ 0 \leq \Gamma_h(x) < \psi(x)\meet h \leq \psi(x), h \]
  So, we must have $\Gamma_h(x)=0$. By Lemma~\ref{construct}.1,
  $\Gamma_x\in\chi(\uS,\uS)$ and by Lemma~\ref{StoS}
  $\Gamma_x=(0,0,0)$ which gives $0=\Gamma_1(x)=\psi(x)$, a
  contradiction. \\
  The case  $\Phi=(\psi,\psi,\psi\meet h)$ can be handled in the same
  way, except that instead of going from $\Gamma_1$ to $\Gamma_h$, one
  goes from $\Gamma_0$ to $\Gamma1$.  

  2.  Assume $\Phi\in\chi(\uS^{n},\uS)$ is  join-irreducible, with
  $\Phi\leq h$. \\
  By Lemma~\ref{construct}.2
  $\Phi_h=\Phi_1\in\chi(\uS^{n-1},\uS)$, call it $\psi$, and
  $0\leq \Phi_0\leq \psi$. Moreover, $\psi\leq h$.  By
  Lemma~\ref{construct}.3 we have $(\Phi_0,\Phi_0,\Phi_0)$ and
  $(0,\psi,\psi)$ are in $\chi(\uS^{n},\uS)$, and 
  \[ (\Phi_0,\Phi_0,\Phi_0) \join (0,\psi,\psi) = (\Phi_0,\psi,\psi) =
  \Phi \]
  Therefore, either $\Phi = (0,\psi,\psi)$ or $\Phi =
  (\Phi_0,\Phi_0,\Phi_0) = (\psi,\psi,\psi)$. The join irreducibility
  of $\Phi$ and Lemma~\ref{construct}.3 force $\psi$ to be join
  irreducible. 

  3.  Assume $\Phi\in\chi(\uS^{n},\uS)$ is  join-irreducible, with
  $\Phi\nleq h$. \\
  Using Lemma~\ref{construct}.2,3 we get that
  $(\Phi_0,\Phi_0,\Phi_0\meet h)$ and $(0,\Phi_1\meet h,\Phi_1)$ are
  in  $\chi(\uS^{n},\uS)$.  Using Lemma~\ref{construct}.1 we get
  \[ (\Phi_0,\Phi_0,\Phi_0\meet h)\join(0,\Phi_1\meet h,\Phi_1)=
  (\Phi_0,\Phi_0\join(\Phi_1\meet h),(\Phi_0\meet h)\join \Phi_1) = 
  (\Phi_0,\Phi_h,\Phi_1) = \Phi \]
  Therefore, either $\Phi= (\Phi_0,\Phi_0,\Phi_0\meet h)$ or
  $\Phi=(0,\Phi_1\meet h,\Phi_1)$. In the first case, take
  $\psi=\Phi_0$, in the second case, take $\psi=\Phi_1$. Once again, the
  join irreducibility of $\Phi$ and Lemma~\ref{construct}.3, force
  $\psi$ to be join  irreducible. The fact that $\Phi\nleq h$ yields
  $\psi\nleq h$. 
\end{proof}

From the previous proposition and Corollary~\ref{StoSj},
Corollary~\ref{cor:meet h is ji} follows by induction.

\begin{corollary}\label{cor:meet h is ji}
  If $\Phi\in\chi(\uS^n,\uS)_J$ then $\Phi\meet h\in\chi(\uS^n,\uS)_J$.
\end{corollary}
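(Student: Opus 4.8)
The plan is to induct on $n$, using Corollary~\ref{StoSj} for the base case and Proposition~\ref{prop:ji up down} together with the inductive hypothesis for the step. For the base case $n=1$, I would simply compute $\Phi\meet h$ for each of the four elements listed in Corollary~\ref{StoSj}, where $h$ denotes the constant map $(h,h,h)$ and the meet is taken pointwise using $0\meet h=0$, $h\meet h=h$, and $1\meet h=h$. One finds $(0,h,h)\meet h=(0,h,h)$, $(h,h,h)\meet h=(h,h,h)$, $(0,h,1)\meet h=(0,h,h)$, and $(1,1,h)\meet h=(h,h,h)$, each of which again appears in $\chi(\uS,\uS)_J$, so the base case holds by inspection.

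For the inductive step, fix $\Phi\in\chi(\uS^n,\uS)_J$ and split on whether $\Phi\leq h$. If $\Phi\leq h$ then $\Phi\meet h=\Phi$, which is join-irreducible by assumption, so there is nothing to prove. If $\Phi\nleq h$, I would invoke Proposition~\ref{prop:ji up down}.3 to obtain $\psi\in\chi(\uS^{n-1},\uS)_J$ with $\psi\nleq h$ so that $\Phi$ is either $(0,\psi\meet h,\psi)$ or $(\psi,\psi,\psi\meet h)$. Writing $\psi'=\psi\meet h$, the inductive hypothesis applied to $\psi$ gives $\psi'\in\chi(\uS^{n-1},\uS)_J$, and clearly $\psi'\leq h$. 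Computing the pointwise meet with $h$ in the two cases then yields $\Phi\meet h=(0,\psi',\psi')$ and $\Phi\meet h=(\psi',\psi',\psi')$ respectively.

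The observation that closes the argument is that, because $\psi'\leq h$, we have $\psi'\meet h=\psi'$, so Proposition~\ref{prop:ji up down}.1 applied to the join-irreducible $\psi'$ produces exactly these two functions: $\Psi'\meet\Pi_1=(0,\psi'\meet h,\psi')=(0,\psi',\psi')$ and $\Psi'\meet\overline{\Pi_1}=(\psi',\psi',\psi'\meet h)=(\psi',\psi',\psi')$, where $\Psi'$ is the $n$-ary version of $\psi'$. Since Proposition~\ref{prop:ji up down}.1 asserts both of these lie in $\chi(\uS^n,\uS)_J$, we conclude that $\Phi\meet h$ is join-irreducible in each case, completing the induction. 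I expect the only real subtlety to be this matching step, namely recognizing that the collapse $\psi'\meet h=\psi'$ turns the generic formulas of Proposition~\ref{prop:ji up down}.1 into precisely the two shapes produced by meeting $\Phi$ with $h$; the pointwise meets themselves, and the case $\Phi\leq h$, are routine.
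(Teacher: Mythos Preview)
Your proof is correct and is exactly the induction the paper has in mind: the paper merely states that the corollary follows by induction from Proposition~\ref{prop:ji up down} and Corollary~\ref{StoSj}, and your write-up supplies precisely those details, including the key observation that $\psi'\leq h$ collapses the formulas of Proposition~\ref{prop:ji up down}.1 to the two shapes $(0,\psi',\psi')$ and $(\psi',\psi',\psi')$.
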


Combining the different parts of Proposition~\ref{prop:ji up down} we
get the following theorem:

\begin{theorem}\label{thm:ji}
  Let $\Phi\in\chi(\uS^n,\uS)$.
  \begin{enumerate}
  \item\label{thm:ji both}
    $\Phi$ is join irreducible if and only if $\Phi=(0,\psi\meet h,\psi)$
    or $\Phi=(\psi,\psi,\psi\meet h)$ for some join irreducible
    $\psi\in\chi(\uS^{n-1},\uS)$, i.e. if and only if
    $\Phi=\Psi\meet\Pi_1$ or $\Phi =\Psi\meet\overline{\Pi_1}$, where 
    $\Psi=(\psi,\psi,\psi)$ is the $n$-ary version of $\psi$.
  \item\label{thm:ji lt j} 
    When $\Phi\leq h$, $\Phi$ is join irreducible if and only if  
    $\Phi=(0,\psi,\psi)$ or
    $\Phi=(\psi,\psi,\psi)$ for some join irreducible
    $\psi\in\chi(\uS^{n-1},\uS)$ with $\psi\leq h$.
  \item \label{thm:ji nlt j}
    When $\Phi\nleq h$, $\Phi$ is join irreducible if and only if 
    $\Phi=(0,\psi\meet h,\psi)$ or
    $\Phi=(\psi,\psi,\psi\meet h)$ for some join irreducible
    $\psi\in\chi(\uS^{n-1},\uS)$ with $\psi\nleq h$.
  \end{enumerate}
\end{theorem}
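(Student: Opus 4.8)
The plan is to obtain the theorem as a direct repackaging of Proposition~\ref{prop:ji up down}, using only the elementary observation that a term function $\psi\in\chi(\uS^{n-1},\uS)$ satisfies $\psi\leq h$ if and only if $\psi\meet h=\psi$. Each of the three statements is a biconditional, so I would handle the two implications separately; all the substantive content (membership in $\chi(\uS^n,\uS)$ and join-irreducibility) has already been discharged in Lemma~\ref{construct} and Proposition~\ref{prop:ji up down}, leaving only case analysis and normalization of the displayed forms.

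For the \emph{if} directions I would invoke Proposition~\ref{prop:ji up down}.1 throughout. Given join-irreducible $\psi$ with $n$-ary version $\Psi=(\psi,\psi,\psi)$, that proposition says $\Psi\meet\Pi_1=(0,\psi\meet h,\psi)$ and $\Psi\meet\overline{\Pi_1}=(\psi,\psi,\psi\meet h)$ are join-irreducible, which is precisely the \emph{if} half of part~\eqref{thm:ji both}. For part~\eqref{thm:ji lt j} I would specialize to $\psi\leq h$, so that $\psi\meet h=\psi$ and these two elements collapse to $(0,\psi,\psi)$ and $(\psi,\psi,\psi)$. For part~\eqref{thm:ji nlt j} the same two elements serve, once I note that $\psi\nleq h$ forces $\psi(x)=1$ at some $x$, so that reading off the coordinate function with first entry $1$ (respectively $0$) gives value $1$ and hence $\Phi\nleq h$.

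For the \emph{only if} directions, parts~\eqref{thm:ji lt j} and~\eqref{thm:ji nlt j} are exactly Proposition~\ref{prop:ji up down}.2 and~\ref{prop:ji up down}.3, so no further argument is needed there. For the general part~\eqref{thm:ji both} I would split a join-irreducible $\Phi$ according to whether $\Phi\leq h$ or $\Phi\nleq h$. In the first case Proposition~\ref{prop:ji up down}.2 yields join-irreducible $\psi\leq h$ with $\Phi=(0,\psi,\psi)$ or $(\psi,\psi,\psi)$; rewriting $\psi=\psi\meet h$ puts $\Phi$ in the form $(0,\psi\meet h,\psi)$ or $(\psi,\psi,\psi\meet h)$ required by~\eqref{thm:ji both}. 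In the second case Proposition~\ref{prop:ji up down}.3 already delivers $\Phi$ in exactly that form.

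The only delicate point, rather than a genuine obstacle, is keeping the side conditions on $\psi$ aligned across the three parts: the proposition supplies $\psi\leq h$ in the $\Phi\leq h$ regime and $\psi\nleq h$ in the $\Phi\nleq h$ regime, and I would verify that the normalization $\psi\meet h=\psi$ invoked to merge the two regimes in part~\eqref{thm:ji both} is legitimate precisely because $\psi\leq h$ there. No renewed computation with the relations $r_1,r_2,r_3$ is required, since every membership and irreducibility claim has already been established.
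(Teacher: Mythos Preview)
Your proposal is correct and matches the paper's own approach: the paper simply states that the theorem follows by ``combining the different parts of Proposition~\ref{prop:ji up down},'' and your write-up spells out exactly that combination, using the observation $\psi\leq h\iff\psi\meet h=\psi$ to normalize the displayed forms. There is nothing to add.
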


\section[Join irreducible morphisms]{The Poset and Polynomial Characterization of the Join-irreducible Morphisms}
\label{sec:ji morphisms}

With Theorem~\ref{thm:ji} at our disposal, we can now proceed to
obtain the poset structure of $\chi(\uS^n,\uS)_J$.  We show that this
poset structure completely determines $\chi(\uS^n,\uS)_J$ as a
partially Stone Space.  Along the way we also obtain a polynomial
representation. 

We are dealing with $\chi(\uS^n,\uS)_J$ with its pointwise partial order
inherited from $\chi(\uS^n,\uS)$.  This is the same as the open set
partial order obtained from the Stone topology.  
When we discuss the properties of the
elements of $\chi(\uS^n,\uS)_J$ covering, being covered or being
incomparable, we will be considering them in the poset
$\chi(\uS^n,\uS)_J$, not in the partially complemented
distributive lattice $\chi(\uS^n,\uS)$. 

\subsection{The Base of the Hairy Cube}\label{the hairy cube}

\begin{proposition}\label{prop:thebase} 
The set $Y^n=\{\Phi\in{\chi(\uS^n,\uS)_J}|\Phi\leq{h}\}$  is
poset isomorphic to $2^n$.
\end{proposition}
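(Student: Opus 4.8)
The plan is to argue by induction on $n$, exploiting the recursive description of the join-irreducibles below $h$ supplied by the second part of Theorem~\ref{thm:ji}. For the base case $n=1$, Corollary~\ref{StoSj} lists the four elements of $\chi(\uS,\uS)_J$; of these only $(0,h,h)$ and $(h,h,h)$ have all coordinates $\leq h$, and since $(0,h,h) < (h,h,h)$ in the pointwise order, $Y^1$ is a two-element chain, i.e. a copy of $2^1$.

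For the inductive step I would assume $Y^{n-1}\cong 2^{n-1}$. By the second part of Theorem~\ref{thm:ji}, every $\Phi\in Y^n$ has one of the two forms $(0,\psi,\psi)$ or $(\psi,\psi,\psi)$ with $\psi\in Y^{n-1}$ join irreducible and $\leq h$. First I would check this representation is unique: since $\Phi\leq h$ forces $\Phi_h=\Phi_1$ (Lemma~\ref{construct}), we must have $\psi=\Phi_1$, and the two forms are distinguished by whether $\Phi_0$ is the constant map $0$ or equals $\psi$; these cannot coincide because a join-irreducible $\psi$ is never the constant $0$. Hence $\Phi\mapsto(\epsilon,\psi)$, with $\epsilon=0$ for the first form and $\epsilon=1$ for the second, is a well-defined bijection $Y^n\to 2\times Y^{n-1}$ (the ``if'' direction of Theorem~\ref{thm:ji} guaranteeing surjectivity).

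It then remains to verify that this bijection is an order-isomorphism. Writing $\Phi=(\Phi_0,\Phi_h,\Phi_1)$ and recalling that $\Phi\leq\Phi'$ holds iff $\Phi_0\leq\Phi'_0$, $\Phi_h\leq\Phi'_h$, and $\Phi_1\leq\Phi'_1$ coordinatewise, I would run through the four cases: two elements of the same form compare exactly as their $\psi$'s do; the comparison $(0,\psi,\psi)\leq(\psi',\psi',\psi')$ reduces to $\psi\leq\psi'$, the condition $0\leq\psi'$ being automatic; and $(\psi,\psi,\psi)\leq(0,\psi',\psi')$ fails for every pair, since it would force $\psi\leq 0$, impossible for a join-irreducible $\psi$. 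This is precisely the product order on $2\times Y^{n-1}$ with the convention $0<1$ in the two-element chain $2$, so $Y^n\cong 2\times 2^{n-1}=2^n$.

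The only genuinely delicate point is the last of these comparisons — that no ``$(\psi,\psi,\psi)$''-element sits below a ``$(0,\psi',\psi')$''-element — together with its twin, the uniqueness of the decomposition; both rest on the same elementary fact that a join-irreducible morphism is never the constant map $0$. Everything else is routine coordinatewise bookkeeping driven by the inductive hypothesis.
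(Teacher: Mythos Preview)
Your proof is correct and follows essentially the same inductive route as the paper: base case from Corollary~\ref{StoSj}, inductive step via the decomposition in Theorem~\ref{thm:ji}.\ref{thm:ji lt j}, and the map $(0,\psi,\psi)\mapsto(0,\eta_{n-1}(\psi))$, $(\psi,\psi,\psi)\mapsto(1,\eta_{n-1}(\psi))$. You actually supply more detail than the paper does---the uniqueness of the decomposition and the four-case order check are left implicit there---but the argument is the same.
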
 

\begin{proof} For $n=1$  see figure of $\chi(\uS,\uS)_J$ given
  in Corollary $\ref{StoSj}$. 

Assume now that there exists a poset isomorphism
$\eta_{n-1}:Y^{n-1}\longrightarrow{2^{n-1}}$.  By
Theorem~\ref{thm:ji}.\ref{thm:ji lt j}
if $\Phi\in{Y^n}$, then we know that
$\Phi=(0,\psi,\psi)$ or $\Phi=(\psi,\psi,\psi)$ for some
$\psi\in{\chi(\uS^{n-1},\uS)_J}$.   Define the following map:
\[
\begin{array}{rccc}
\eta_n:&Y^n             &\longrightarrow&{2^n}\\
     &(0,\psi,\psi)   &\mapsto        &(0,\eta_{n-1}(\psi))\\ 
     &(\psi,\psi,\psi)&\mapsto        &(1,\eta_{n-1}(\psi))\\
\end{array}
\]
The fact that $\eta_n$ is bijective follows immediately from the fact that
$\eta_{n-1}$ is. That $\eta_n$ and its inverse are order preserving is clear
from the definition and the fact that $\eta_{n-1}$ and its inverse are
order preserving.   
\end{proof}

\subsection{The Covers}\label{the covers}

\begin{proposition}\label{prop:covers} Let $n \geq 1$ and
  $\Phi,\Gamma\in\chi(\uS^{n},\uS)_J$. 
  \begin{enumerate}
  \item\label{incomparablecovers} If $\Phi,\Gamma\nleq 
    h$ and $\Phi\neq\Gamma$ then $\Phi,\Gamma$ are incomparable.
  \item\label{uniquecovers} If $\Phi\leq{h}$ then
    there is a unique
    $\widetilde{\Phi}\in{\chi(\uS^n,\uS)_J}$   with
    $\widetilde{\Phi}\nleq{h}$, such that $\widetilde{\Phi}$ covers
    $\Phi$.  Moreover $\Phi=\widetilde{\Phi}\meet h$.
  \item\label{coversone} If $\Phi\nleq{h}$ then it only covers
    $\Phi\meet h$ in $\chi(\uS^n,\uS)_J$.  
  \end{enumerate}
\end{proposition}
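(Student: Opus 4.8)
The plan is to handle the three parts in the order (\ref{incomparablecovers}), (\ref{coversone}), (\ref{uniquecovers}), since the last two rest both on (\ref{incomparablecovers}) and on a bijection between the ``hairs'' and the ``base'' that is cleanest to set up once. The auxiliary object I would introduce is the map $\beta\colon\{\Phi\in\chi(\uS^n,\uS)_J:\Phi\nleq h\}\to Y^n$ sending $\Phi\mapsto\Phi\meet h$; it is well defined by Corollary~\ref{cor:meet h is ji}, and I claim it is a bijection. For part (\ref{incomparablecovers}) I would induct on $n$, reading the base case $n=1$ off the four-element poset of Corollary~\ref{StoSj} (the two hairs $(0,h,1)$ and $(1,1,h)$ are visibly incomparable). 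In the inductive step, given distinct hairs $\Phi\neq\Gamma$ with $\Phi,\Gamma\nleq h$, Theorem~\ref{thm:ji}.\ref{thm:ji nlt j} writes each as $(0,\phi\meet h,\phi)$ or $(\phi,\phi,\phi\meet h)$ for a join-irreducible $\phi\nleq h$ in dimension $n-1$ (and $\gamma$ for $\Gamma$), where the type is detected by whether the slice $\Phi_0$ is the zero map.

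I then split into three cases. When $\Phi,\Gamma$ share a type, their defining maps $\phi,\gamma$ are distinct hairs in $\chi(\uS^{n-1},\uS)_J$, hence incomparable by the induction hypothesis; evaluating at the slice $x_1=1$ (resp.\ $x_1=0$) transports witnesses of incomparability from $\phi,\gamma$ up to $\Phi,\Gamma$. When the types differ, say $\Phi=(0,\phi\meet h,\phi)$ and $\Gamma=(\gamma,\gamma,\gamma\meet h)$, I use only that $\phi,\gamma\nleq h$: at the slice $\Phi_0$ the identically-zero component lies strictly below the nonzero component $\Gamma_0=\gamma$, while at the slice $x_1=1$ the uncapped component $\Phi_1=\phi$ strictly exceeds the capped component $\Gamma_1=\gamma\meet h$, so $\Phi,\Gamma$ are incomparable with no appeal to the induction hypothesis.

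Next I would prove $\beta$ is a bijection, again by induction on $n$ on the back of Theorem~\ref{thm:ji}: parts (\ref{thm:ji lt j}) and (\ref{thm:ji nlt j}) present both base elements and hairs in the same two recursive shapes, and the slice at $x_1=0$ separates the two shapes (a join-irreducible, hence nonzero, $\phi\meet h$ versus the zero map), so both surjectivity and injectivity of $\beta_n$ reduce to the same statements for $\beta_{n-1}$. With (\ref{incomparablecovers}) and $\beta$ available, part (\ref{coversone}) is short: if $\Phi\nleq h$ and $\Theta\in\chi(\uS^n,\uS)_J$ satisfies $\Theta<\Phi$, then $\Theta$ cannot be a hair (distinct hairs are incomparable by (\ref{incomparablecovers})), so $\Theta\leq h$ and hence $\Theta\leq\Phi\meet h<\Phi$; thus the only candidate $\Phi$ can cover is $\Phi\meet h$, and it does cover it since no join-irreducible lies strictly between $\Phi\meet h$ and $\Phi$. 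For part (\ref{uniquecovers}), surjectivity of $\beta$ yields a hair $\widetilde\Phi$ with $\widetilde\Phi\meet h=\Phi$, which covers $\Phi$ by (\ref{coversone}); and any hair $\Gamma$ covering $\Phi$ forces $\Phi=\Gamma\meet h$ (otherwise the join-irreducible $\Gamma\meet h$ sits strictly between $\Phi$ and $\Gamma$), so injectivity of $\beta$ gives $\Gamma=\widetilde\Phi$.

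I expect the bijectivity of $\beta$ to be the main obstacle. The incomparability in (\ref{incomparablecovers}) and the cover statements in (\ref{coversone}) and (\ref{uniquecovers}) are brief once the two recursive types are kept apart, but ensuring that the two forms $(0,\cdot\meet h,\cdot)$ and $(\cdot,\cdot,\cdot\meet h)$ never coincide---so that the induction genuinely pairs each base element with exactly one hair---requires care. The fact that makes this work is that a join-irreducible is never the zero map, so the slice at $x_1=0$ always distinguishes the two types, both for a hair and for its image under $\beta$.
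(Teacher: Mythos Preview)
Your proof is correct. Part~(\ref{incomparablecovers}) follows the paper's argument essentially verbatim: induction on $n$ with the base case read off Corollary~\ref{StoSj}, the same-type case handled by the inductive hypothesis applied to the slice $\Phi_1$ (resp.\ $\Phi_0$), and the mixed-type case settled by direct inspection using only $\phi,\gamma\nleq h$.

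For parts~(\ref{uniquecovers}) and~(\ref{coversone}) you reorganize the argument in a way that differs from, and in one respect streamlines, the paper's. The paper proves (\ref{uniquecovers}) first by a direct inductive construction of $\widetilde\Phi$---needing the somewhat ad hoc appeal to Lemma~\ref{construct}.2(d) to verify that $(\widetilde\psi,\widetilde\psi,\psi)$ actually covers $(\psi,\psi,\psi)$---and then deduces (\ref{coversone}) from (\ref{incomparablecovers}) and (\ref{uniquecovers}). You instead isolate the inductive content as the bijectivity of $\beta\colon\Phi\mapsto\Phi\meet h$, prove (\ref{coversone}) directly from (\ref{incomparablecovers}) and Corollary~\ref{cor:meet h is ji} alone (so (\ref{coversone}) no longer depends on (\ref{uniquecovers})), and then read off both existence and uniqueness in (\ref{uniquecovers}) from surjectivity and injectivity of $\beta$ together with (\ref{coversone}). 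Your route makes the hair/base correspondence explicit and sidesteps the verification via Lemma~\ref{construct}.2(d); the paper's order matches the statement's numbering but carries the same inductive content.
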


\begin{proof}
  (\ref{incomparablecovers}).
  The case $n=1$ is taken care of in Corollary~\ref{StoSj}.
  For $n>1$, Theorem~\ref{thm:ji}.\ref{thm:ji nlt j} tells us that
  either $\Phi=(0,\phi\meet h,\phi)$ or
  $\Phi=(\phi,\phi,\phi\meet h)$, and similarly 
  $\Gamma=(0,\gamma\meet h,\gamma)$ or 
  $\Gamma=(\gamma,\gamma,\gamma\meet h)$ for some
  $\phi,\gamma\in\chi(\uS^{n-1},\uS)_J$ with $\phi,\gamma\nleq h$. 
  Clearly $(0,\phi\meet h,\phi)$ and $(\gamma,\gamma,\gamma\meet h)$ are
  incomparable since $\phi\nleq h$.   By induction 
  $(0,\phi\meet h,\phi)$ and $(0,\gamma\meet h,\gamma)$ are
  incomparable; similarly, $(\phi,\phi,\phi\meet h)$ and
  $(\gamma,\gamma,\gamma\meet h)$ are incomparable.  \\
  (\ref{uniquecovers}).
  Once again, the case $n=1$ is taken care of in
  Corollary~\ref{StoSj}.  
  For  $n>1$, Theorem~\ref{thm:ji}.\ref{thm:ji lt j} tells us that either
  $\Phi=(0,\psi,\psi)$ or $\Phi=(\psi,\psi,\psi)$, for some
  $\psi\in\chi(\uS^{n-1},\uS)_J$ with $\psi\leq h$.  By induction, there
  is a unique $\widetilde{\psi}\in\chi(\uS^{n-1},\uS)_J$ with
  $\widetilde{\psi}\nleq h$, such that $\widetilde{\psi}$ covers
  $\psi$.  Moreover, $\psi=\widetilde{\psi}\meet h$.  By
  Theorem~\ref{thm:ji}.\ref{thm:ji nlt j}, we have that
  $(0,\psi,\widetilde{\psi})$ and 
  $(\widetilde{\psi},\widetilde{\psi},\psi)$ are in
  $\chi(\uS^{n},\uS)_J$.  Clearly, $(0,\psi,\widetilde{\psi})$ covers
  $(0,\psi,\psi)$.  That $(\widetilde{\psi},\widetilde{\psi},\psi)$
  covers $(\psi,\psi,\psi)$ follows from the fact that neither
  $({\psi},\widetilde{\psi},\psi)$ nor
  $(\widetilde{\psi},{\psi},\psi)$ are morphisms by
  Lemma~\ref{construct}.2.d.  This shows the existence part, by taking
  $\widetilde{\Phi}=(0,\psi,\widetilde{\psi})$ when
  $\Phi=(0,\psi,\psi)$ and
  $\widetilde{\Phi}=(\widetilde{\psi},\widetilde{\psi},\psi)$  when 
  $\Phi=(\psi,\psi,\psi)$.  In either case note that
  $\Phi=\widetilde{\Phi}\meet h$.   For uniqueness, assume that
  $\Gamma\in\chi(\uS^{n},\uS)_J$, with $\Gamma\nleq h$, covers
  $\Phi$. By Theorem~\ref{thm:ji}.\ref{thm:ji nlt j}, we have either
  $\Gamma=(0,\gamma\meet h,\gamma)$ or
  $\Gamma=(\gamma,\gamma,\gamma\meet h)$ for some
  $\gamma\in\chi(\uS^{n-1},\uS)_J$ with $\psi\nleq h$. In the first
  case, it follows that $\Phi=(0,\psi,\psi)$ and $\gamma$ covers
  $\psi$.  By uniqueness of $\widetilde{\psi}$, we must have
  $\gamma=\widetilde{\psi}$, and $\Gamma=(0,\psi,\widetilde{\psi})$.
  In the second case, since
  $(0,\psi,\psi)\leq\Gamma$ implies $(\psi,\psi,\psi)\leq\Gamma$ we
  must have $\Phi=(\psi,\psi,\psi)$ and $\gamma$ covers $\psi$. Again,
  by uniqueness of $\widetilde{\psi}$ we get
  $\gamma=\widetilde{\psi}$, and
  $\Gamma=(\widetilde{\psi},\widetilde{\psi},\psi)$. \\
  (\ref{coversone}).
  By part~\ref{incomparablecovers}, $\Phi$ can only cover elements of
  $\chi(\uS^{n},\uS)_J\cap\downset{h}$, and part~\ref{uniquecovers}
  yields the uniqueness. 
  By Corollary~\ref{cor:meet h is ji}, $\Phi\meet h$ is in
  $\chi(\uS^n,\uS)_J$, and 
  by part~\ref{uniquecovers} it has a unique cover
  $\widetilde{\Phi}\nleq h$.  Therefore, we must have
  $\widetilde{\Phi}\leq\Phi$.  Now part~\ref{incomparablecovers} forces 
  $\widetilde{\Phi}=\Phi$, so $\Phi$ covers $\Phi\meet h$.  
\end{proof}

We can now combine the results of Propositions~\ref{prop:thebase}
and~\ref{prop:covers} to describe the poset structure of
$\chi(\uS^{n},\uS)_J$. 

\begin{theorem}\label{thm:hairy cube}
  The poset $\chi(\uS^{n},\uS)_J$ consists of two parts:  the ``{\sl
    base}'' $Y^n$ which is an $n$-cube and the ``{\sl hairs}''
  $\chi(\uS^{n},\uS)_J\setminus Y^n$, which are pairwise incomparable.
  Each element of the base is covered by a unique hair.  Each hair
  covers a unique base element.  
\end{theorem}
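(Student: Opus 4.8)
The plan is to assemble the statement directly from Propositions~\ref{prop:thebase} and~\ref{prop:covers}, since between them they already isolate every feature the theorem asserts. The first move is to fix the decomposition of the poset: every $\Phi\in\chi(\uS^n,\uS)_J$ satisfies exactly one of $\Phi\leq h$ or $\Phi\nleq h$, so $\chi(\uS^n,\uS)_J$ is the disjoint union of the base $Y^n=\{\Phi : \Phi\leq h\}$ and the hairs $\chi(\uS^n,\uS)_J\setminus Y^n=\{\Phi : \Phi\nleq h\}$. With this identification in hand, the four assertions of the theorem match the four ingredients available: the base is an $n$-cube, the hairs are mutually incomparable, each base element has a unique hair covering it, and each hair covers a unique base element.

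For the base I would simply cite Proposition~\ref{prop:thebase}, which gives a poset isomorphism $Y^n\cong 2^n$; this is the $n$-cube. For the incomparability of the hairs I would invoke Proposition~\ref{prop:covers}.\ref{incomparablecovers}: any two distinct elements lying above $h$ are incomparable, which is precisely the statement that the hairs form an antichain. For the covering relations going up from the base, Proposition~\ref{prop:covers}.\ref{uniquecovers} supplies, for each $\Phi\leq h$, a unique $\widetilde{\Phi}\nleq h$ covering $\Phi$, i.e.\ a unique hair above each base element, together with the bookkeeping fact $\Phi=\widetilde{\Phi}\meet h$. For the covering relations going down from a hair, Proposition~\ref{prop:covers}.\ref{coversone} says a hair $\Phi\nleq h$ covers only $\Phi\meet h$, and Corollary~\ref{cor:meet h is ji} guarantees that $\Phi\meet h$ is again join irreducible, hence a genuine element of the base. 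That pins down the unique base element each hair covers.

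The only point that deserves a sentence rather than a bare citation is that the two matchings just described are mutually inverse, so that the base-to-hair and hair-to-base correspondences form a single perfect matching: the hair attached to $\Phi\leq h$ is $\widetilde{\Phi}$ with $\widetilde{\Phi}\meet h=\Phi$, and conversely the base element beneath a hair $\Phi\nleq h$ is $\Phi\meet h$, whose unique hair-cover is $\Phi$ itself by the uniqueness clause of Proposition~\ref{prop:covers}.\ref{uniquecovers}. I expect no genuine obstacle here: the substantive work---the inductive analysis of join irreducibles in Theorem~\ref{thm:ji} and its consequences---is already done, and this theorem is essentially a repackaging. If anything, the care lies in keeping the two regimes ($\leq h$ versus $\nleq h$) cleanly separated and in confirming that every cover relation in $\chi(\uS^n,\uS)_J$ is accounted for: inside the base they are those of $2^n$, there are none among the hairs by the antichain property, and the only cross-relations are from each base element up to its unique hair and from each hair down to its unique base element.
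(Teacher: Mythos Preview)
Your proposal is correct and matches the paper's approach exactly: the paper does not give a standalone proof of this theorem but simply states it as the combination of Propositions~\ref{prop:thebase} and~\ref{prop:covers}, which is precisely what you do. One minor wording quibble: the hairs are the $\Phi$ with $\Phi\nleq h$, not ``lying above $h$'', but you clearly have the right condition in mind since you cite Proposition~\ref{prop:covers}.\ref{incomparablecovers} correctly.
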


In Theorem~\ref{PSSisomorphictohairycube} we show that these partial
order properties of $\chi(\uS^{n},\uS)_J$ completely determine it as a
partially Stone space.  Even though we will only need the fact that
$\chi(\uS^{n},\uS)_J$ is a poset, we can actually see that it is a
meet-semilattice.  

\begin{corollary}\label{meetsemilattice}
  $\chi(\uS^n,\uS)_J$ is a meet-semilattice.  For any
  $\Phi,\Gamma\in\chi(\uS^n,\uS)_J$,  
  \[ \Phi\meet\Gamma = (\Phi\meet h)\meet(\Gamma\meet h) \]
  is an element of the base of the hairy cube. 
\end{corollary}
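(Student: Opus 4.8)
The plan is to reduce every greatest--lower--bound computation in the poset $\chi(\uS^{n},\uS)_J$ to a meet inside the base $Y^n$. From Theorem~\ref{thm:hairy cube} the poset is the $n$-cube $Y^n$ together with pairwise incomparable hairs, each hair covering a unique base vertex and each base vertex carrying a unique hair; by Proposition~\ref{prop:covers}.\ref{coversone} and Corollary~\ref{cor:meet h is ji} the vertex beneath a hair $\Theta$ is exactly $\Theta\meet h\in Y^n$. The first fact I would record is that for every $\Theta\in\chi(\uS^{n},\uS)_J$ the join-irreducibles lying below $\Theta$ are $\Theta$ itself together with precisely the base elements below $\Theta\meet h$: no hair lies below a base element (hairs are $\nleq h$), distinct hairs are incomparable, and $w\in Y^n$ satisfies $w\le\Theta$ iff $w\le\Theta\meet h$ since $w\le h$. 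In particular, when $\Theta\notin Y^n$ the element $\Theta$ is the only nonbase join-irreducible below it.

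The key lemma, and the step I expect to be the main obstacle, is that $Y^n$ is closed under the ambient (pointwise) lattice meet of $\chi(\uS^{n},\uS)$, so that the meet of two base elements is again join-irreducible. I would prove this by induction on $n$, the case $n=1$ being the two-element chain of Corollary~\ref{StoSj}. For the step, meets in $\chi(\uS^{n},\uS)$ are computed slicewise, so writing two base elements in the form $(0,\psi,\psi)$ or $(\psi,\psi,\psi)$ supplied by Theorem~\ref{thm:ji}.\ref{thm:ji lt j}, a short case check gives $(0,\alpha,\alpha)\meet(0,\beta,\beta)=(0,\alpha\meet\beta,\alpha\meet\beta)$, $(0,\alpha,\alpha)\meet(\beta,\beta,\beta)=(0,\alpha\meet\beta,\alpha\meet\beta)$ and $(\alpha,\alpha,\alpha)\meet(\beta,\beta,\beta)=(\alpha\meet\beta,\alpha\meet\beta,\alpha\meet\beta)$, where $\alpha,\beta\in Y^{n-1}$. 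By the inductive hypothesis $\alpha\meet\beta\in Y^{n-1}$, and each displayed value is then again of the admissible form, hence lies in $Y^n$ by Theorem~\ref{thm:ji}.\ref{thm:ji lt j}. Consequently $Y^n$ is a sublattice isomorphic to $2^n$ (compatibly with the map $\eta_n$ of Proposition~\ref{prop:thebase}), and for $a,b\in Y^n$ the ambient meet $a\meet b$ is exactly the greatest lower bound of $a,b$ inside $Y^n$.

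With these two facts I would finish as follows. Fix $\Phi\neq\Gamma$ in $\chi(\uS^{n},\uS)_J$ and let $\Theta$ be a common lower bound. If $\Theta$ were a hair, then $\Theta\le\Phi$ would force $\Phi=\Theta$ (a hair is below no base element and is incomparable to every other hair), and likewise $\Gamma=\Theta$, contradicting $\Phi\neq\Gamma$; hence every common lower bound lies in $Y^n$. By the first fact the common lower bounds are exactly the base elements that are $\le\Phi\meet h$ and $\le\Gamma\meet h$, that is, the base elements $\le(\Phi\meet h)\meet(\Gamma\meet h)$, and by the closure lemma this meet is itself the largest of them and lies in the base. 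Therefore the greatest lower bound of $\Phi$ and $\Gamma$ exists and equals $(\Phi\meet h)\meet(\Gamma\meet h)\in Y^n$, which proves both that $\chi(\uS^{n},\uS)_J$ is a meet-semilattice and the stated formula (the remaining case $\Phi=\Gamma$ being the trivial idempotent one). The only delicate point is the closure lemma; once $Y^n$ is known to be meet-closed, the incomparability of the hairs reduces everything else to bookkeeping.
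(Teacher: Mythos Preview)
The paper states this corollary without proof, treating it as an immediate consequence of Theorem~\ref{thm:hairy cube}. Your argument correctly supplies the details the paper omits, and in particular identifies the one genuinely nontrivial ingredient: that the base $Y^n$ is closed under the ambient (pointwise) meet. Your inductive proof of that closure via Theorem~\ref{thm:ji}.\ref{thm:ji lt j} is clean and correct, and once it is in place the rest is, as you say, bookkeeping driven by the incomparability of the hairs and Proposition~\ref{prop:covers}.

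One small point worth flagging: the displayed formula in the corollary, read literally for \emph{all} $\Phi,\Gamma$, fails when $\Phi=\Gamma$ is a hair, since then the greatest lower bound in $\chi(\uS^n,\uS)_J$ is $\Phi$ itself, not $\Phi\meet h$. Your parenthetical about the ``trivial idempotent'' case glosses over this; you do establish the meet-semilattice claim in that case, but the equality and the assertion ``is an element of the base'' both fail. This is a wrinkle in the paper's statement rather than a defect in your reasoning, and your proof is valid for the intended content (the formula for distinct $\Phi,\Gamma$, together with the existence of all binary meets).
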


\subsection{The Partially Stone Space Corresponding to the Hairy
Cube}\label{hairy cube} 

  \ 

  There is a well-known duality between $T_0$ Alexandrov spaces and
  partial orders. For details see~\cite{Johnstone}.  For us, it will
  be more convenient to use the opposite partial order and the
  opposite (interchange open and closed) topology. 

  Here are the details.  Given a poset $P$, the set
  $\Lambda=\{\downset{p}|p\in{P}\}$ forms a basis for a topology on
  $P$; we refer to it as the ``{\sl downset topology}'' (it is the
  opposite of the ``{\sl Alexandrov topology}'').  Given a $T_0$-space
  $X$, the following defines a partial order on $X$: for $x,y\in{X}$,
  set $x\leq y$ if and only if every open subset of $X$ that contains
  $y$ must also contain $x$. We refer to this as the ``{\sl open set
    partial order}'' (it is the opposite of the ``{\sl specialization
    order}'').  

Just like in the Alexandrov duality we get the following lemma:

\begin{lemma}\label{Alexandrov}\ 
  \begin{enumerate}
  \item \label{thesametopology}  
    Suppose that $X$ is a $T_0$ Alexandrov space, partially ordered
    with the open set partial order.  Then the downset topology
    induced by the partial order, and the original topology on $X$ are
    the same.
  \item\label{thesamepartialorder} 
    Suppose that $P$ is a partially ordered set, which is given the
    downset topology.  Then the open set partial order and the original
    partial order are the same.
\end{enumerate}
Moreover, a function between posets is order preserving if and only if it is
continuous as a map between $T_0$ Alexandrov spaces.
\end{lemma}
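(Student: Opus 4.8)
The plan is to prove the two numbered parts and the functoriality clause by directly unwinding the two definitions, with the Alexandrov hypothesis entering only in part~(\ref{thesametopology}). Before starting I would record two mechanical facts. First, in the downset topology a set is open exactly when it is a downset (downward closed): every downset $U$ equals $\bigcup_{p\in U}\downset{p}$ and each $\downset{p}$ is basic, while conversely every union of principal downsets is downward closed. Second, the open set partial order really is a partial order---reflexivity and transitivity are immediate from the definition, and antisymmetry is precisely the $T_0$ axiom, since two points lying in exactly the same open sets must coincide. I would also note in passing that a poset equipped with the downset topology is a $T_0$ Alexandrov space: an arbitrary intersection of downsets is a downset, and for $x\nleq y$ the open set $\downset{y}$ contains $y$ but not $x$.

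For part~(\ref{thesamepartialorder}) I start from a poset $P$ with its downset topology and compare the original order $\le$ with the recovered open set order. If $x\le y$, then any open set containing $y$, being downward closed, must contain $x$, so $x$ lies below $y$ in the open set order. Conversely, $\downset{y}$ is itself open and contains $y$, so if $x$ lies below $y$ in the open set order then $x\in\downset{y}$, i.e.\ $x\le y$. Hence the two orders agree.

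Part~(\ref{thesametopology}) is the crux. Here $X$ is a $T_0$ Alexandrov space carrying the open set order. The single decisive observation is that for each point $x$ the principal downset $\downset{x}=\{y\mid \text{every open neighborhood of }x\text{ contains }y\}$ is nothing but $\bigcap\{U\mid U\text{ open},\ x\in U\}$, and the \emph{Alexandrov} property forces this intersection to be open---it is the minimal open neighborhood of $x$. This is the only place the hypothesis is used, and it is the main (if brief) obstacle: without it the principal downsets need not be open and the two topologies would genuinely differ. Granting it, each basic downset $\downset{x}$ is open in the original topology, so the downset topology is coarser than the original one; conversely any original open set $V$ is downward closed, since $x\in V$ and $y\le x$ force $y$ to lie in the open set $V$ by definition of the order, so $V$ is a union of principal downsets and hence open in the downset topology. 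The two topologies therefore coincide.

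Finally, for the functoriality clause I would view the posets as $T_0$ Alexandrov spaces via their downset topologies and identify continuity with the condition that the preimage of every downset is a downset. If $f$ is order preserving and $V$ is a downset in the codomain, then $x\in f^{-1}(V)$ together with $y\le x$ gives $f(y)\le f(x)\in V$, so $y\in f^{-1}(V)$, proving $f^{-1}(V)$ is a downset; thus $f$ is continuous. Conversely, if $f$ is continuous and $x\le y$, apply continuity to the open set $\downset{f(y)}$: its preimage is a downset containing $y$, hence containing $x$, so $f(x)\le f(y)$ and $f$ is order preserving. This establishes the equivalence and completes the lemma.
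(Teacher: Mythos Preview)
Your proof is correct and complete. The paper itself does not supply a proof of this lemma: it introduces the lemma by remarking that ``just like in the Alexandrov duality we get the following lemma'' and refers the reader to Johnstone for the standard version, so there is nothing against which to compare argument-by-argument. Your write-up is exactly the routine verification one would expect---identifying $\downset{x}$ with the intersection of the open neighborhoods of $x$ and invoking the Alexandrov hypothesis to make that intersection open is the one genuinely needed idea, and you have isolated it clearly.
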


Note that any Partially Stone Space is $T_0$, and every finite space
is Alexandrov.  
Recall that for a finite partially complemented distributive lattice
$\calL$, the partially Stone space $[X,Y]$ corresponding to $\calL$
under the $\bsr\leftrightarrows\pss$ duality has
$X=\calL_J$ and \mbox{$Y=\{x\in X| x \leq h\}$}. 
From the definition of the Stone topology on $X$ and the previous
lemma we get the following corollary:

\begin{corollary}\label{cor:same partial order}
  Let $\calL$ be a finite partially complemented
  distributive lattice. Let $\calL_J$ be the set of
  join-irreducible elements of $\calL$, with the partial order
  inherited from $\calL$.  Let $[X,Y]$ be the Partially Stone
  Space corresponding to $\calL$ under the duality
  $\bsr\leftrightarrows\pss $.  
  \begin{enumerate}
  \item The downset topology on $\calL_J$ and the Stone topology on
    $X$ are the same.
  \item The partial order in $\calL_J$ and the open set partial
    order in $X$ are the same.
  \end{enumerate}
 \end{corollary}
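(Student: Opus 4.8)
The plan is to reduce the statement to the Alexandrov duality recorded in Lemma~\ref{Alexandrov}, using the explicit description of the Stone topology $\Tau=\{\phi(I)\mid I\in\idl(\calL)\}$. Throughout I identify a point of $X$ with the join-irreducible element it determines: since $\calL$ is finite, its underlying bounded distributive lattice has the property that every prime filter is of the form $\upset{j}=\{a\in\calL\mid j\leq a\}$ for a unique $j\in\calL_J$, so indeed $X=\calL_J$ as the paper records. With this identification fixed, I want to translate the ideal-theoretic data defining $\Tau$ into the order-theoretic data of $\calL_J$.

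The one computation requiring care, and the crux of the whole argument, is the membership translation $\phi(I)=\calL_J\cap I$. To establish it, fix $I\in\idl(\calL)$ and $j\in\calL_J$. Then $j\in\phi(I)$ means $\upset{j}\cap I\neq\emptyset$, i.e.\ there is $a\in I$ with $j\leq a$; because $I$ is a down-set, this holds if and only if $j\in I$. Hence under $X=\calL_J$ the Stone-open set $\phi(I)$ is exactly $\calL_J\cap I$. Everything else will be formal consequences of this identity together with Lemma~\ref{Alexandrov}.

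Next I would prove part~(2), that the open set partial order on $X$ agrees with the order inherited from $\calL$. By definition $j\leq_{os} j'$ exactly when every Stone-open set containing $j'$ also contains $j$, i.e.\ when $j'\in I$ implies $j\in I$ for every $I\in\idl(\calL)$, using the translation of the previous paragraph. If $j\leq j'$ in $\calL$, then any ideal containing $j'$ contains $j$ since ideals are down-closed, so $j\leq_{os} j'$. Conversely, testing against the principal ideal $I=\downset{j'}$ (which contains $j'$) forces $j\in\downset{j'}$, i.e.\ $j\leq j'$. Thus $\leq_{os}$ coincides with the inherited order on $\calL_J$, which is part~(2).

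Finally part~(1) falls out of the Alexandrov lemma. As noted before the statement, $X$ is $T_0$ (being partially Stone) and Alexandrov (being finite), so Lemma~\ref{Alexandrov}.\ref{thesametopology} applies: the downset topology induced by the open set partial order coincides with the original Stone topology. By part~(2) that open set partial order is exactly the inherited order on $\calL_J$, so the downset topology on $\calL_J$ equals the Stone topology on $X$, giving part~(1). I expect no genuine obstacle beyond the membership identity $\phi(I)=\calL_J\cap I$; once it is in hand, part~(2) is immediate from principal ideals and part~(1) is delegated entirely to Lemma~\ref{Alexandrov}. As an alternative one could instead prove part~(1) first, checking directly via join-irreducibility in the distributive lattice $\calL$ that the sets $\calL_J\cap I$ are precisely the down-sets of $\calL_J$, and then deduce part~(2) from Lemma~\ref{Alexandrov}.\ref{thesamepartialorder}.
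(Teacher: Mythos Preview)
Your proposal is correct and is essentially the argument the paper has in mind: the paper does not give a written proof but simply declares the corollary to follow ``from the definition of the Stone topology on $X$ and the previous lemma'' (Lemma~\ref{Alexandrov}), which is exactly the reduction you carry out via the identity $\phi(I)=\calL_J\cap I$ and an appeal to Lemma~\ref{Alexandrov}. Your write-up fills in the details the paper leaves implicit, and your alternative route (proving~(1) directly and deducing~(2) from Lemma~\ref{Alexandrov}.\ref{thesamepartialorder}) is equally valid.
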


We now  characterize those partially
Stone spaces that correspond to the Hairy Cube. 

\begin{theorem}\label{PSSisomorphictohairycube} 
  Let $[X,Y]$ be a Partially Stone Space partially ordered with the
  open set partial order such that the following hold:
  \begin{enumerate} 
  \item $Y$ is poset isomorphic to $2^n$;
  \item The elements of $X-Y$ are pairwise incomparable.  
  \item Every $y\in{Y}$ has a unique cover $x\in{X-Y}$;
  \item Every $x\in{X-Y}$ covers only one $y\in{Y}$;
\end{enumerate}

Then $[X,Y]$, and the Partially Stone Space corresponding to
$\chi(\uS^n,\uS)$ under the
$\bsr\leftrightarrows\pss$ duality, are Partially
Stone Space homeomorphic.
\end{theorem}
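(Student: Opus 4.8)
The plan is to reduce the assertion to a purely order-theoretic statement and then to show that the four hypotheses rigidly determine the poset. First I would observe that everything in sight is finite: by hypothesis $Y\cong 2^n$, and conditions (3) and (4) set up a bijection $y\mapsto\widehat{y}$ between $Y$ and the hairs $X\setminus Y$, so $|X|=2^{n+1}$. Since every Partially Stone Space is $T_0$ and every finite space is Alexandrov, Lemma~\ref{Alexandrov} applied to $[X,Y]$ and Corollary~\ref{cor:same partial order} applied to the dual of $\chi(\uS^n,\uS)$ tell us that on both spaces the Stone topology coincides with the downset topology and that continuity is the same as order preservation. Consequently a Partially Stone Space homeomorphism $[X,Y]\to[X_0,Y_0]$ is nothing more than a poset isomorphism $X\to X_0$ carrying $Y$ onto $Y_0$. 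Here I take $[X_0,Y_0]$ to be the space dual to $\chi(\uS^n,\uS)$, so that $X_0=\chi(\uS^n,\uS)_J$ and $Y_0=\{x\in X_0\mid x\le h\}$; by Proposition~\ref{prop:thebase} and Proposition~\ref{prop:covers} (equivalently, Theorem~\ref{thm:hairy cube}) this $[X_0,Y_0]$ satisfies the same four conditions. Thus it suffices to prove that the four conditions determine the poset $X$ up to an isomorphism respecting $Y$.

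The heart of the argument, and the step I expect to be the main obstacle, is to show that the four conditions pin down every comparability in $X$. I would first check that each hair is a maximal element: if a hair $\widehat c$ satisfied $\widehat c<y$ for some $y\in Y$, then $\widehat c<y<\widehat y$ would exhibit two distinct comparable hairs, contradicting condition (2). Next, for a fixed hair $\widehat c$ I would analyze the set $B(\widehat c)=\{y\in Y\mid y<\widehat c\}$. Since the only hair at or below $\widehat c$ is $\widehat c$ itself (condition (2)), every element strictly below $\widehat c$ lies in $Y$, so $B(\widehat c)$ is a down-set of $Y$ and any maximal element of it is covered by $\widehat c$. Condition (4) forces $B(\widehat c)$ to have a unique maximal element, which must be $c$; as $Y\cong 2^n$ is a lattice this yields $B(\widehat c)=\{y\in Y\mid y\le c\}$. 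Combining these observations, the order on $X$ is completely described: on $Y$ it is the $2^n$ order; a base element $a$ satisfies $a<\widehat c$ exactly when $a\le c$ in $Y$; and no two distinct hairs are comparable. The delicate point here is the passage from ``covers a unique base element'' to ``lies above exactly the principal down-set of that element'', which is what rules out spurious comparabilities and makes the poset rigid.

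Finally I would assemble the isomorphism. Fix any poset isomorphism $g\colon Y\to Y_0$ (one exists because both are isomorphic to $2^n$) and extend it to $f\colon X\to X_0$ by setting $f(\widehat c)=\widehat{g(c)}$, using the bijections between base elements and hairs on each side. Bijectivity of $f$ is immediate, and the explicit description of the order obtained above makes order preservation routine in both directions: comparabilities inside $Y$ transfer because $g$ is an isomorphism, a relation $a<\widehat c$ transfers to $g(a)\le g(c)$ hence to $g(a)<\widehat{g(c)}$, and hairs remain pairwise incomparable on both sides. Since $f(Y)=g(Y)=Y_0$, the map $f$ is a poset isomorphism carrying $Y$ onto $Y_0$, and by the reduction in the first paragraph it is the desired Partially Stone Space homeomorphism.
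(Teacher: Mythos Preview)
Your proof is correct and follows essentially the same line as the paper's: reduce via finiteness and Lemma~\ref{Alexandrov}/Corollary~\ref{cor:same partial order} to constructing a poset isomorphism $X\to\chi(\uS^n,\uS)_J$ carrying $Y$ onto $Y^n$, then build that isomorphism from the four structural conditions. The paper's version is terser---it simply declares the poset isomorphism ``clear'' from Theorem~\ref{thm:hairy cube} and adds one sentence noting that the restriction $\eta|_Y$ is automatically coherent because $Y$ and $Y^n$ are finite---whereas you actually supply the order-theoretic details (hairs are maximal, $\{y:y<\widehat c\}={\downset c}$) showing the four hypotheses determine the poset; you may wish to add the word on coherence, but in the finite case it is indeed automatic.
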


\begin{proof}
  It is clear from Theorem~\ref{thm:hairy cube} that $X$ and
  $\chi(\uS^{n},\uS)_J$ are poset isomorphic, under an isomorphism
  $\eta:X\to \chi(\uS^{n},\uS)_J$ that maps $Y$ onto $Y^n$. By
  Lemma~\ref{Alexandrov}, $\eta$ is a homeomorphism of topological
  spaces.  The only additional fact needed is that $\eta|Y$
  is coherent.  This follows from the fact that $Y$ and $Y^n$ are
  finite. 
\end{proof}

\subsection{Polynomials}

\ 

Recall from Lemma~\ref{lemma:term functions} that $\chi(\uS^n,\uS)_J$
is just the set of $n$-ary term functions 
on the algebra $\S$ 
which are join-irreducible. The following result shows that these can
be obtained using only the $\meet$ operation and \mbox{unary
  \rule[7pt]{8pt}{0.4pt}} operation introduced in Lemma~\ref{barops}.

When writing term functions for an algebra, projection maps are
usually called ``{\sl variables}'', and denoted by lower case letters.
Given a variable $p_i$, with $1\leq i \leq n$, and
$\epsilon_i\in\{0,1\}$ we define

\[
p_i^{\epsilon_i} = \left\{ \begin{array}{ll}
p_i            &\text{ if } \epsilon_i=0 \\
\overline{p_i} &\text{ if } \epsilon_i=1 \\
\end{array}\right.
\]

\begin{theorem}\label{polynomials}  
  Let $\Phi\in\chi(\uS^n,\uS)_J$.
  \begin{enumerate}
  \item When $\Phi\nleq h$ it can be uniquely written as a polynomial of
    the form 
    \[ \Phi = \bigmeet_{i=1}^n p_i^{\epsilon_i}  \]
  \item When $\Phi\leq h$ it can be uniquely written as a polynomial
    of the form
    \[ \Phi = \left(\bigmeet_{i=1}^n p_i^{\epsilon_i}\right)\meet h  \]
  \end{enumerate}
\end{theorem}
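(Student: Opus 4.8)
The plan is to establish existence by induction on $n$ via Theorem~\ref{thm:ji}, and then to establish uniqueness by a direct level-set computation. The first step is to record the two building blocks as polynomials. Using Lemma~\ref{barops} to evaluate the bar operation one finds $\Pi_1=(0,h,1)=p_1$ and $\overline{\Pi_1}=(1,1,h)=\overline{p_1}=p_1^1$, so the two clauses $\Psi\meet\Pi_1$ and $\Psi\meet\overline{\Pi_1}$ appearing in Theorem~\ref{thm:ji} are precisely $\Psi\meet p_1^{\epsilon_1}$ for $\epsilon_1\in\{0,1\}$.

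For the base case $n=1$ I would match the four elements of Corollary~\ref{StoSj} with their polynomials: the two hairs are $(0,h,1)=p_1$ and $(1,1,h)=\overline{p_1}$, while the two base elements are $(0,h,h)=p_1\meet h$ and $(h,h,h)=\overline{p_1}\meet h$. For the inductive step, assume the statement in arity $n-1$. If $\Phi\nleq h$, then Theorem~\ref{thm:ji}.\ref{thm:ji nlt j} produces a join-irreducible $\psi\nleq h$; writing $\psi=\bigmeet_{i=1}^{n-1}p_i^{\delta_i}$ by induction and passing to its $n$-ary version (which shifts the variable indices up by one, so that $\psi$ reappears as $\bigmeet_{i=2}^{n}p_i^{\delta_{i-1}}$), the clause $\Psi\meet p_1^{\epsilon_1}$ becomes the full minterm $\bigmeet_{i=1}^{n}p_i^{\epsilon_i}$. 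If $\Phi\leq h$, I use Theorem~\ref{thm:ji}.\ref{thm:ji lt j} and the inductive form $\psi=\bigl(\bigmeet_{i=1}^{n-1}p_i^{\delta_i}\bigr)\meet h$; the subcase $\Phi=(0,\psi,\psi)=\Psi\meet\Pi_1$ directly gives the required shape with $\epsilon_1=0$.

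The one delicate subcase is $\Phi=(\psi,\psi,\psi)=\Psi$, which superficially does not mention $p_1$ at all. Here I would reinsert $\overline{p_1}$ using the identity $\overline{p_1}\meet\mu\meet h=\mu\meet h$, which holds for every $\mu$ because $\overline{x_1}\in\{h,1\}$ always lies above any value of $\mu\meet h\leq h$; this rewrites $\Phi$ as $\overline{p_1}\meet\bigl(\bigmeet_{i=2}^{n}p_i^{\delta_{i-1}}\bigr)\meet h$, i.e.\ the required shape with $\epsilon_1=1$. I expect this reinsertion to be the main obstacle: it is the point at which the ``missing'' variable is restored and the two subcases of Theorem~\ref{thm:ji}.\ref{thm:ji lt j} get matched to the two values of $\epsilon_1$; tracking the index shift in the $n$-ary version is the other spot requiring care.

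For uniqueness I would recover $\epsilon$ from level sets rather than redo the induction. In case (1) the set $\{x\in S^n:\Phi(x)=1\}$ is the box $\prod_{i=1}^{n}A_i$ with $A_i=\{1\}$ when $\epsilon_i=0$ and $A_i=\{0,h\}$ when $\epsilon_i=1$; since $\{1\}\neq\{0,h\}$, distinct exponent vectors yield distinct functions. In case (2), where $\Phi$ takes only the values $0$ and $h$, the set $\{x:\Phi(x)=h\}$ is the box $\prod_{i=1}^{n}B_i$ with $B_i=\{h,1\}$ when $\epsilon_i=0$ and $B_i=S$ when $\epsilon_i=1$, and again these are distinct. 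Alternatively, uniqueness is immediate from counting: Proposition~\ref{prop:thebase} and Theorem~\ref{thm:hairy cube} give exactly $2^n$ join-irreducibles in each class, matching the $2^n$ exponent vectors, so the surjection supplied by existence is forced to be a bijection.
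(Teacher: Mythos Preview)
Your proof is correct. The existence argument is essentially the paper's, though you organize it by first splitting on $\Phi\leq h$ and then invoking Theorem~\ref{thm:ji}.\ref{thm:ji lt j} and~\ref{thm:ji nlt j} separately, whereas the paper applies Theorem~\ref{thm:ji}.\ref{thm:ji both} uniformly: from $\Phi=(0,\phi\meet h,\phi)=p_1\meet\Psi$ or $\Phi=(\phi,\phi,\phi\meet h)=\overline{p_1}\meet\Psi$ it reads off $\epsilon_1$ immediately, and only afterwards notes that $\Phi\leq h$ iff $\phi\leq h$. This sidesteps your ``delicate subcase'' entirely, since when $\psi\leq h$ one already has $(\psi,\psi,\psi)=(\psi,\psi,\psi\meet h)$, so the $\overline{p_1}$ factor is present from the outset rather than reinserted. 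Your level-set computation for uniqueness is a genuinely different and self-contained alternative to the paper's argument, which instead threads the map $\eta$ of Proposition~\ref{prop:thebase} through the induction and appeals to its bijectivity together with Proposition~\ref{prop:covers}; your counting alternative is essentially that argument compressed.
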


\begin{proof}
  When $n=1$ the statement follows from Corollary~\ref{StoSj}. If $n>1$
  recall the map $\eta$ in Proposition~\ref{prop:thebase}.  We will
  show that taking $\epsilon=\eta(\Phi\meet h)$, proves existence.
  By Theorem~\ref{thm:ji}.\ref{thm:ji both} there is
  $\phi\in\chi(\uS^{n-1},\uS)_J$, and we have two cases to
  consider. In the first case, $\Phi=(0,\phi\meet h,\phi)$, 
  \[ \eta(\Phi\meet h)=\eta(0,\phi\meet h,\phi\meet
  h)=(0,\eta(\phi\meet h)), \text{\ and\ } 
  \Phi=(0,h,1)\meet(\phi,\phi,\phi)
  =p_1\meet\phi
  .\]
  In the second case, $\Phi=(\phi,\phi,\phi\meet h)$, 
  \[ 
  \eta(\Phi\meet h)=\eta(\phi\meet h,\phi\meet h,\phi\meet h)=
  (1,\eta(\phi\meet h)), \text{\ and } 
  \Phi=(1,1,h)\meet(\phi,\phi,\phi)
  =\overline{p_1}\meet\phi
  .\]
  In either case, $\Phi\leq h$ if and only if $\phi\leq h$.  By
  induction we have:
  \[ \text{when }\Phi\nleq h,\quad
  \Phi = p_1^{\epsilon_1}\meet\phi
  =p_1^{\epsilon_1}\meet\bigmeet_{i=2}^n p_i^{\eta(\phi\meet h)} 
  =\bigmeet_{i=1}^n p_i^{\eta(\Phi\meet h)}; \]
  \[ \text{when }\Phi\leq h,\quad
  \Phi = p_1^{\epsilon_1}\meet\phi
  =p_1^{\epsilon_1}\meet\bigmeet_{i=2}^n p_i^{\eta(\phi\meet h)}\meet h
  =\bigmeet_{i=1}^n p_i^{\eta(\Phi\meet h)}\meet h. \]
  Uniqueness follows from the above, the bijectivity of $\eta$ in
  Proposition~\ref{prop:thebase}, and Proposition~\ref{prop:covers}.
\end{proof}

\pagebreak

\newcommand{\Phisub}[3]{\ensuremath #1 \meet #2 \meet #3}

\begin{center}\footnotesize
The ``Hairy Cube'' for $n=3$
\end{center}
\[
\xymatrix @!C@C=-40pt @!R@R=6pt
{
& & & & & \Phisub{\overline{p}_1}{\overline{p}_2}{\overline{p}_3} \ar@{-}[ddl] & & \\ \\
\Phisub{\overline{p}_1}{p_2}{\overline{p}_3} \ar@{-}[ddr] & & & & \Phisub{\overline{p}_1}{\overline{p}_2}{\overline{p}_3}\wedge{h} \ar@{-}[ddlll] \ar@{-}[ddrrr] \ar@{-}[dddd] & & &  & \Phisub{p_1}{\overline{p}_2}{\overline{p}_3} \ar@{-}[ddl] \\ \\
& \Phisub{\overline{p}_1}{p_2}{\overline{p}_3}\wedge{h}
\ar@{-}[dddrrr] \ar@{-}[dddd] & & & &
\Phisub{\overline{p}_1}{\overline{p}_2}{p_3} \ar@{-}[ddl] & & \Phisub{p_1}{\overline{p}_2}{\overline{p}_3}\wedge{h} \ar@{-}[dddd] \ar@{-}[dddlll] \\
& & & \Phisub{p_1}{p_2}{\overline{p}_3} \ar@{-}[ddr] & & \\
\Phisub{\overline{p}_1}{p_2}{p_3} \ar@{-}[ddr] & & & & \Phisub{\overline{p}_1}{\overline{p}_2}{p_3}\wedge{h}  \ar@{-}[ddlll] \ar@{-}[ddrrr] & & & & \Phisub{p_1}{\overline{p}_2}{p_3} \ar@{-}[ddl] \\ 
& & & & \Phisub{p_1}{p_2}{\overline{p}_3}\wedge{h} \ar@{-}[dddd] \\ 
& \Phisub{\overline{p}_1}{p_2}{p_3}\wedge{h} \ar@{-}[dddrrr] & & & & & & \Phisub{p_1}{\overline{p}_2}{p_3}\wedge{h} \ar@{-}[dddlll] \\
& & & \Phisub{p_1}{p_2}{p_3} \ar@{-}[ddr]\\
 \\ 
& & & & \Phisub{p_1}{p_2}{p_3}\wedge{h} & & 
}
\]

\section{Neither Full nor Strong, A Small Strong Structure}
\label{sec:makeitstrong}

From the optimal duality established in Theorem~\ref{optdual}, we
have obtained geometric and polynomial characterizations of
$\chi(\uS^n,\uS)_J$.  However, this duality is neither full nor
strong, as the following result shows.

\begin{theorem}\label{notfs}
The duality yielded by $\uS$ is neither full nor strong.
\end{theorem}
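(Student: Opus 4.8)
The plan is to exhibit a single object $\bfX\in\chi=\ScP(\uS)$ on which the map $\epsilon_\bfX\colon\bfX\to DE(\bfX)=\calA(\chi(\bfX,\uS),\S)$ fails to be an isomorphism. Since a strong duality is in particular a full duality, and a full duality requires every $\epsilon_\bfX$ to be an isomorphism of structured topological spaces, producing even one object where $DE(\bfX)\not\cong\bfX$ simultaneously kills both fullness and strength. The natural place to look is the underlying discrete structure $\uS$ itself: I would compute $D(\S)=\calA(\S,\S)$, then $ED(\S)=\chi(\calA(\S,\S),\uS)$, and finally $DE$ applied to small objects, checking whether the relations $r_1,r_2,r_3$ on $\bfX$ are recovered exactly by the double dual, or whether extra relational structure (or extra points) appears.

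The key steps, in order, are as follows. First I would recall from the general theory (\cite{Clark-Davey}) that $\uS$ yields a full duality if and only if $\uS$ is injective in $\chi$ \emph{and} the duality is already known to be a duality, together with the characterization that fullness fails precisely when some $DE(\bfX)$ carries strictly more structure than $\bfX$. Concretely, the cleanest obstruction to strength is injectivity of $\uS$ as an object \emph{together with} the requirement that the brute-force strong-duality conditions hold; by Proposition~\ref{duality} we already know $\uS$ is injective in $\chi$, so strength cannot fail through injectivity, which forces the failure to be one of fullness. Thus the heart of the argument is to locate an $\bfX$ for which the natural evaluation map $\epsilon_\bfX$ is not surjective onto $DE(\bfX)$ --- equivalently, a ``phantom'' homomorphism $\chi(\bfX,\uS)\to\S$ that is not given by evaluation at a point of $\bfX$.

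The main obstacle, and the step I expect to require real work, is producing the phantom morphism explicitly and verifying it is not an evaluation. The structure $\uS$ consists only of relations $r_1,r_2,r_3$ (no operations and no partial operations), and it is exactly the \emph{absence} of operations --- in particular the absence of the partial operation $\lambda_1$ that is later added to build $\uSos$ --- that should create room for a non-evaluation homomorphism on the dual algebra. I would therefore take a small power, most plausibly $\bfX=\uS$ or a two-point substructure of $\uS^2$, compute the finite lattice $\chi(\bfX,\uS)$ using Lemma~\ref{StoS} and Lemma~\ref{construct}, and then exhibit a lattice/$\bsr$-homomorphism into $\S$ that respects all the algebraic operations of $\chi(\bfX,\uS)$ but does not coincide with $\mathrm{ev}_x$ for any $x\in\bfX$. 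Verifying that this candidate genuinely preserves the full $\langle 0,h,1,\join,\meet\rangle$ structure while escaping every evaluation is the delicate computational core; once it is in hand, the conclusion that $\epsilon_\bfX$ is not surjective, hence that $\uS$ yields neither a full nor a strong duality, is immediate.
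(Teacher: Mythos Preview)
Your overall strategy---prove non-fullness by exhibiting an $\bfX\in\chi$ for which $\epsilon_\bfX$ is not surjective, and then deduce non-strength---is sound in principle, but the execution has two concrete problems. First, the preliminary logic is tangled: you write that ``strength cannot fail through injectivity, which forces the failure to be one of fullness,'' but at that point you have not yet shown that strength fails, so nothing is forced. The clean implication you actually need is just strong $\Rightarrow$ full, hence non-full $\Rightarrow$ non-strong; no appeal to injectivity is required in that direction. Second, and more seriously, your leading candidate $\bfX=\uS$ cannot work. Since $\uS$ yields a duality, $E(\uS)=\chi(\uS,\uS)$ is the free one-generated algebra in $\calA$, so $DE(\uS)=\calA(\chi(\uS,\uS),\S)$ has exactly $|S|=3$ elements, and $\epsilon_{\uS}$ is a bijective embedding, hence an isomorphism. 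The object that does work is the two-element substructure $\{0,1\}\leq\uS$ (a substructure of $\uS$, not of $\uS^2$): restriction gives an algebra isomorphism $\chi(\uS,\uS)\to\chi(\{0,1\},\uS)$ (check directly from Lemma~\ref{StoS}), so $DE(\{0,1\})$ again has three points while $\{0,1\}$ has only two, and the homomorphism corresponding to evaluation at $h$ supplies the phantom.

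The paper's proof is shorter and avoids computing any $DE$. It first shows non-strength directly via the \SSDT~\cite{Clark-Davey}*{3.2.9}: since $\uS$ is a purely relational (total) structure, strength would force the Finite Term Closure condition, but $X=\{0,1\}\leq\uS$ and $y=h$ witness its failure---by inspection of Lemma~\ref{StoS}, no two morphisms in $\chi(\uS,\uS)$ agree on $\{0,1\}$ yet differ at $h$. Non-fullness then follows in one line from the \FSDT~\cite{Clark-Davey}*{3.2.4} (strong $\Leftrightarrow$ full and injective) together with the injectivity already recorded in Proposition~\ref{duality}. Note that the same substructure $\{0,1\}$ drives both arguments; the paper simply packages the observation through the FTC/\FSDT\ machinery rather than chasing the phantom homomorphism explicitly.
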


\begin{proof}

First we will show that the duality yielded by $\uS$ on $\calA$ 
is not strong.  Consider the \SSDT~\cite{Clark-Davey}*{3.2.9}.  
Since $\uS$ is a total structure, if it were to yield a strong duality
on $\calA$, it would satisfy the Finite Term
Closure Condition:
\vskip 5pt
FTC: For any $n\in\N$, $X\leqslant{\uS}^n$ and
$y\in{\leqslant{\uS}^n\backslash{X}}$, $\exists$ term functions
$\sigma,\tau : \uS^n \rightarrow \uS$ on $\S$ (that is
morphisms) that agree on $X$ but not at $y$.
\vskip 5pt
Consider $X = \{0,1\}\leqslant{\uS}$ and $y=h$.  Upon viewing the
diagram in Lemma~\ref{StoS}  we see that $(\uS,\uS)|_X =
\{(0,0),(0,h),(0,1),(h,h),(h,1),(1,h),(1,1)\}$ and hence no pair
$\sigma,\tau\in\chi(\uS,\uS)$ can agree on $\{0,1\}$ (and differ at $h$).

To show that the duality yielded on $\calA$ by $\uS$ is not even a
full duality,  consider the \FSDT~\cite{Clark-Davey}*{3.2.4}, which
states that $\uS$ yields a strong duality 
on $\calA$ if and only if $\uS$ yields a full duality on
$\calA$ and $\uS$ is injective in $\chi$.  Since  $\uS$
is injective in $\chi$, by Theorem~\ref{optdual}, it follows
that $\uS$ does not yield a full duality on $\calA$.
\end{proof}

The \DAT~\cite{Clark-Davey}*{1.5.3} establishes embeddings of $X$ into
$DE(X)$ for 
every $X\in{\chi}$. The failure of the duality to be full, and
therefore strong, is in the failure of $X$ to be isomorphic to
$DE(\bfX)=\calA(\chi(\bfX,\uS),\S)$ for every
$X\in{\chi}$.  In order to obtain a strong duality, we need to
add structure to $\uS$ that will eliminate objects of $\chi$
that are a closed substructure of a power of $\uS$ and are not
term/hom-closed.  

The \NUSDT,~\cite{Clark-Davey}*{3.3.8} uses the irreducibility index
of $\S$, defined below, to give an exact recipe for constructing a 
generating structure $\uSNU$ that will yield a strong duality on
$\calA$.  One simply needs to add the all the algebraic n-ary
operations and partial operations for \mbox{$1 \leq$ n $\leq
  \irr(\S)$} to the structure on $\uS$,
to obtain
$\uSNU$.  We refer to this method we refer to as the ``Near Brute
Force'' method. 
One can then apply the methods of~\cite{Clark-Davey}, particularly the
\MSSDL~\cite{Clark-Davey}*{3.2.3},
to work towards obtaining an optimal strong duality.

\begin{definition}\label{irrind}
  Let $A$ be a finite algebra.  The {\sl irreducibility of} $A$ is the
  least $n\in\N$ such that the zero congruence on $A$ is a meet of $n$
  meet-irreducible congruences.  The {\sl irreducibility index} of $A$
  denoted $\irr(A)$, is the maximum of the irreducibilities of
  subalgebras of $A$.
\end{definition}

\begin{lemma}\label{irr2}  
$\irr(\S)$ is 2.
\end{lemma}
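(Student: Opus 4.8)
The plan is to reduce the computation of $\irr(\S)$ to a single finite congruence calculation, exploiting the rigidity of $\S$. First I would observe that, since the type $\langle 0,0,0,2,2\rangle$ of a partially complemented distributive lattice (Definition~\ref{pcdl}) includes the three nullary operations $0$, $h$, $1$, every subalgebra of $\S$ must contain $0$, $h$, and $1$, and hence equals $\S$. Thus $\S$ has no proper subalgebras, and by Definition~\ref{irrind} the quantity $\irr(\S)$ is simply the irreducibility of $\S$ itself; there is no nontrivial maximum to take.

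It then remains to compute $\con(\S)$ and locate the meet-irreducible congruences. Viewing $\S$ as the three-element chain $0 < h < 1$ with $[h,1]=\{h,1\}$ the two-element Boolean algebra, I would first record the bar operation from Lemma~\ref{barops}: $\overline{0}=(0\join h)'=h'=1$, $\overline{h}=h'=1$, and $\overline{1}=(1\join h)'=1'=h$. The lattice congruences of a three-element chain are exactly four: the diagonal $\Delta$, the congruence $\theta_1$ collapsing $\{0,h\}$, the congruence $\theta_2$ collapsing $\{h,1\}$, and the all congruence $\nabla$. A short check shows each respects bar: under $\theta_1$ the only nontrivial pair is $0\mathrel{\theta_1}h$, and $\overline 0=\overline h=1$ is automatic; under $\theta_2$ the pair $h\mathrel{\theta_2}1$ requires $\overline h=1$ and $\overline 1=h$ to lie in the same block, which holds since $h\mathrel{\theta_2}1$. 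Hence $\con(\S)$ is the four-element Boolean lattice $2^2$ with atoms $\theta_1$ and $\theta_2$.

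With $\con(\S)\cong 2^2$ in hand, I would identify the meet-irreducibles by noting that $\theta_1$ and $\theta_2$ each have $\nabla$ as their unique upper cover, so both are meet-irreducible, whereas $\Delta=\theta_1\meet\theta_2$ is not meet-irreducible. Consequently $\Delta$ cannot be realized as a single meet-irreducible congruence, so the irreducibility is not $1$; but $\Delta=\theta_1\meet\theta_2$ exhibits it as a meet of the two meet-irreducibles $\theta_1,\theta_2$, giving irreducibility $2$. Combined with the first paragraph, this yields $\irr(\S)=2$.

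I expect the only real obstacle to be the middle step: verifying that the four lattice congruences of the chain are \emph{exactly} the PCDL congruences, i.e. that the bar operation and the constants neither destroy any of them nor force additional identifications. This is a finite and routine verification, but it is where the value $2$ is actually pinned down — in particular it is the presence of the two distinct atoms $\theta_1,\theta_2$ (equivalently, the failure of $\S$ to be subdirectly irreducible) that prevents the irreducibility from collapsing to $1$.
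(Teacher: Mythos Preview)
Your argument is correct and its overall shape matches the paper's: observe that $\S$ has no proper subalgebras, compute $\con(\S)\cong 2^2$, and read off that $\Delta$ is a meet of two but not one meet-irreducible. The only real difference is how $\con(\S)$ is obtained. The paper scans the already-established lattice of subalgebras of $\S^2$ from Lemma~\ref{subS2} for those which are equivalence relations, arriving at $\con(\S)=\{\Delta,\,r_3,\,r_2\cap r_2^{-1},\,\S^2\}$; your $\theta_1$ and $\theta_2$ are exactly $r_3$ and $r_2\cap r_2^{-1}$. Your direct enumeration of equivalence relations on the $3$-chain is more self-contained, while the paper's route is shorter given that Lemma~\ref{subS2} is already in hand. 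One small remark: since you explicitly invoke the PCDL type $\langle 0,0,0,2,2\rangle$, the bar operation is not among the fundamental operations, so your bar-compatibility check is, strictly speaking, redundant in that signature (the four lattice congruences are automatically PCDL congruences); it does no harm, however, and becomes the relevant check if one works instead in the term-equivalent semiring signature.
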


\begin{proof}
  From the lattice of subalgebras of $\S^2$ in
  Lemma~\ref{subS2} it is easy to check that {$\con(\S) =
  \{\Delta,r_3,r_2 \cap {r_2}^{-1},\S^2\}$}, and it is
  isomorphic to the 2 dimensional cube.  $\S$ has no
  subalgebra other than itself.
\end{proof}

Recall that a n-ary operation $g$ on $S$ is algebraic over $\S$ if
$g\in\calA(\S^n,\S)$; a n-ary partial operation $h$ on $S$ is
algebraic over $\S$ if $h\in{\calA(X,\S)}$ for some $X\leq{\S^n}$.
Furthermore, these conditions are equivalent to saying that the
corresponding graphs form subalgebras of $\S^{n+1}$.  As can be seen
through the proofs of Lemma~\ref{restproj} and
Proposition~\ref{strong}, 
the number of algebraic binary partial operations on $\S$ is too large, for the
brute force method to yield a useful structure.  We want a structure
$\uSs$ that yields a strong duality on $\calA$ that is as simple as
possible.  On the other hand, the only algebraic binary total
operations on $\S$ are the projections.

\begin{proposition}\label{proj}
  Let $n\in\N$ and
  $\Lambda:\S^n\longrightarrow{\S}$. 
  Then $\Lambda\in{\calA(\S^n,\S)}$ if and
  only if it is a projection map, $\Lambda=\Pi_{i}^n$. 
  for some 
  $1\leq{i}\leq{n}$.  
\end{proposition}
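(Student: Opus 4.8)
The plan is to prove the nontrivial ``only if'' direction by analyzing the kernel of $\Lambda$ as a congruence on $\S^n$; the ``if'' direction is immediate, since every projection $\Pi^n_i$ is a homomorphism and hence algebraic.

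First I would observe that any $\Lambda\in\calA(\S^n,\S)$ is surjective: homomorphisms preserve the three nullary operations $0$, $h$, $1$, so the image of $\Lambda$ contains $\{0,h,1\}=S$. Thus $\theta=\ker\Lambda$ is a congruence of $\S^n$ with $\S^n/\theta\cong\S$. Now I would invoke the fact, noted at the beginning of Section~\ref{sec:morphisms}, that finite powers of $\S$ are skew-free: every congruence of $\S^n$ is a product congruence $\theta=\theta_1\times\cdots\times\theta_n$ with each $\theta_i\in\con(\S)$, and consequently $\S^n/\theta\cong\prod_{i=1}^n\S/\theta_i$.

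The key step is then a cardinality argument. By Lemma~\ref{irr2} we have $\con(\S)=\{\Delta,\,r_3,\,r_2\cap r_2^{-1},\,\S^2\}$, and a direct check of the corresponding partitions of $S$ shows that the quotients of $\S$ by $\Delta$, $r_3$, $r_2\cap r_2^{-1}$, and $\S^2$ have cardinalities $3$, $2$, $2$, and $1$, respectively. Since $\prod_{i=1}^n|\S/\theta_i|=|\S|=3$ and $3$ is prime, exactly one factor must have cardinality $3$, forcing $\theta_i=\Delta$ for a unique index $i$, and all remaining factors must have cardinality $1$, forcing $\theta_j=\S^2$ for $j\neq i$; in particular no factor of cardinality $2$ can occur. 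Hence $\theta$ identifies two tuples precisely when they agree in coordinate $i$, which is exactly $\ker\Pi^n_i$.

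Finally I would conclude via the homomorphism theorem: $\Lambda$ and $\Pi^n_i$ are both surjections onto $\S$ with the same kernel, so $\Lambda=\sigma\circ\Pi^n_i$ for some automorphism $\sigma$ of $\S$. As $\S$ is a three-element chain with distinguished constants $0$, $h$, $1$, its only automorphism is the identity, whence $\Lambda=\Pi^n_i$. I expect the main obstacle to be the reduction to product congruences: everything hinges on the skew-freeness of finite powers of $\S$, after which the prime-cardinality observation and the triviality of $\mathrm{Aut}(\S)$ make the remainder routine.
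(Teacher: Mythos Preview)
Your argument is correct, but it proceeds along quite different lines from the paper's own proof. The paper argues order-theoretically: it observes that $\Lambda^{-1}(1)$ is a prime filter and $\Lambda^{-1}(0)$ is a prime ideal of $\S^n$, hence principal as $\upset x$ and $\downset y$ for a join-irreducible $x$ and a meet-irreducible $y$; it then uses the constant $h$ to pin down the shape of $x$ and $y$ (each has exactly one nontrivial coordinate, equal to $1$ and $0$ respectively), shows those coordinates must coincide, and finishes with a three-case check. Your route is congruence-theoretic: you invoke the skew-freeness of finite powers of $\S$ (noted at the start of Section~\ref{sec:morphisms}) together with the description of $\con(\S)$ from Lemma~\ref{irr2}, and then a prime-cardinality count forces $\ker\Lambda=\ker\Pi^n_i$; the triviality of $\mathrm{Aut}(\S)$ (immediate from the three nullary constants) closes the argument. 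Your approach is arguably cleaner and more structural, and it makes explicit use of two facts the paper has already recorded; the paper's approach is more self-contained and shows directly how the lattice order of $\S^n$ controls $\Lambda$, without appealing to the congruence machinery.
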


\begin{proof}
  ${\Lambda^{-1}(1)}$ (resp.$\Lambda^{-1}(0)$) is a prime filter
  (resp. ideal) of $\S^n$, hence there is $x$ (resp. $y$)
  join-irreducible (resp. meet-irreducible) such that 
  $\Lambda^{-1}(1)=\upset{x}$
  (resp. $\Lambda^{-1}(0)=\downset{y}$).  
  As $x$ is join-irreducible, it follows that 
  $\Pi_j(x)\neq{0}$ for at most one $j$,  Since $h$ is a 
  constant, $\Lambda(x\meet h)=\Lambda(x)\meet h=1\meet h=h$, and we cannot
  have $x\meet h=x$   and therefore  $\Pi_j(x)=1$.  Similarly,
  $\Pi_i(y)\neq{1}$ for at most one $i$, and $\Pi_i(y)=0$.  
  If $i\neq{j}$, then $x\leq{y}$ and $\Lambda(x)\leq{\Lambda(y)}$
  yielding a  contradiction,  hence $i=j$. Now, let $z\in{X}$ and
  consider the following cases: 
\begin{enumerate}

\item $\Pi_i(z)=0$.  In this case $z\in{\downset{y}}$ and hence $\Phi(z)=0$.

\item $\Pi_i(z)=h$.  In this case $z\notin{\upset{x}}$ and
  $z\notin{\downset{y}}$,  hence $\Phi(z)=h$.

\item $\Pi_i(z)=1$.  In this case $z\in{\upset{x}}$ and hence $\Phi(z)=1$.
 
\end{enumerate}

So we see that $\Phi=\Pi_{i}$. 
The converse holds as projections are homomorphisms. 
\end{proof}

As a result of Proposition~\ref{proj} the only total operations in
$\uSNU$ are $\Pi^1_1$, $\Pi^2_1$ and $\Pi^2_2$.  There are no proper
algebraic unary partial operations, since the only subalgebra of $\S$
is $\S$ itself, but the set of algebraic binary partial operations is
too large to be useful.

In order to get a manageable structure $\uSs$ that yields a strong
duality on $\calA$, we will reduce the set of algebraic binary partial
operations using the \MSSDL.  Any structure that strongly entails
$\uSNU$ will also yield a strong duality on $\calA$.  To get such
structure $\uSs$, we may delete from $\uSNU$ those partial operations
that are restrictions of other total or partial operations left in the
structure. In particular, we may delete any partial operation which is
a restriction of a projection.

Unlike total algebraic operations, which by Proposition~\ref{proj}
have to be projections, for the algebraic binary partial operations
there is a little more room as the following lemma shows. 

\begin{lemma}\label{restproj}
  Let $A\leq{\S^2}$. For $i=1, 2$, let $L_i=\Pi_i^2$
  restricted to $\downset{h}$, $U_i=\Pi_i^2$ restricted to $\upset{h}$.
  Let $\Lambda\in{\calA(A,\S)}$.  

  \begin{enumerate}
  \item If $A\cap{\upset{h}}$ contains $(h,1)$ or $(1,h)$ then
    $\upset{h}\subseteq{A}$ and $\Lambda$ restricted to $\upset{h}$
    equals  $U_1$ or $U_2$.
  \item $\Lambda$ restricted to $A\cap{\downset{h}}$ equals $L_1$ or
    $L_2$.  
  \end{enumerate} 
\end{lemma}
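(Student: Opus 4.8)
The plan is to exploit that both $\downset{h}$ and $\upset{h}$ are extremely small inside $\S^2$, and that a homomorphism $\Lambda\in\calA(A,\S)$ is forced to be order preserving and to fix the constants $0,h,1$. First I would record the explicit sets: in $\S$ we have $\downset{h}=\{0,h\}$ and $\upset{h}=\{h,1\}$ (the latter being the two-element Boolean algebra of Definition~\ref{pcdl}), so inside $\S^2$ one has $\downset{h}=\{(0,0),(0,h),(h,0),(h,h)\}$ and $\upset{h}=\{(h,h),(h,1),(1,h),(1,1)\}$. Since $A$ is a subalgebra it contains the constant tuples $(0,0),(h,h),(1,1)$ and is closed under $\join$, $\meet$, and the bar operation of Lemma~\ref{barops}; and since $\Lambda$ is a lattice homomorphism it is monotone with $\Lambda(0,0)=0$, $\Lambda(h,h)=h$, $\Lambda(1,1)=1$.

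For part~(1), I would first get $\upset{h}\subseteq A$. Assuming $(h,1)\in A$ (the case $(1,h)\in A$ is symmetric), apply bar coordinatewise: since $\overline{h}=1$ and $\overline{1}=h$ in $\S$, we have $\overline{(h,1)}=(1,h)\in A$, and together with the constants $(h,h),(1,1)$ this exhausts $\upset{h}$. (Note the bar step is genuinely needed, as $\join,\meet$ preserve the coordinatewise order and can never swap coordinates.) Now monotonicity gives $\Lambda(x)\geq\Lambda(h,h)=h$ for each $x\in\upset{h}$, so $\Lambda$ maps $\upset{h}$ into $\{h,1\}$. Computing $\Lambda(h,1)\join\Lambda(1,h)=\Lambda(1,1)=1$ and $\Lambda(h,1)\meet\Lambda(1,h)=\Lambda(h,h)=h$ forces $\{\Lambda(h,1),\Lambda(1,h)\}=\{h,1\}$, and the two possible assignments are exactly $U_1$ and $U_2$ on $\upset{h}$, with both projections agreeing with $\Lambda$ on the constants.

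For part~(2) I would argue dually, now with monotonicity giving $\Lambda(x)\leq\Lambda(h,h)=h$, hence $\Lambda$ sends each element of $A\cap\downset{h}$ into $\{0,h\}$. The main point to be careful about is that a \emph{single} projection must work simultaneously for all elements present, so I would split on which off-diagonal tuples lie in $A$: if both $(0,h)$ and $(h,0)$ are in $A$, then $\Lambda(0,h)\join\Lambda(h,0)=\Lambda(h,h)=h$ and $\Lambda(0,h)\meet\Lambda(h,0)=\Lambda(0,0)=0$ force one value to be $0$ and the other $h$, the two choices being precisely $L_1$ and $L_2$; if only one off-diagonal tuple is present, its single value (either $0$ or $h$) is automatically consistent with $L_1$ or with $L_2$; and if neither is present, $\Lambda$ agrees with both $L_1$ and $L_2$ on $\{(0,0),(h,h)\}$. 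In every case $\Lambda$ restricted to $A\cap\downset{h}$ equals $L_1$ or $L_2$. The only real obstacle is this consistency issue in part~(2)—ruling out a "mixed" map—and it is dispatched entirely by the join/meet compatibility of the two off-diagonal tuples; everything else is bookkeeping over the four-element sets $\downset{h}$ and $\upset{h}$.
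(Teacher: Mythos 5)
Your proof is correct and follows the same route as the paper: the bar operation of Lemma~\ref{barops} produces the missing off-diagonal element of $\upset{h}$, and the constants on $\Delta_S$ together with order preservation and the homomorphism property pin down $\Lambda$ on the remaining elements. The paper compresses this second step into a single sentence; your explicit join/meet computations ruling out the ``mixed'' assignments (and your case split on which off-diagonal elements of $\downset{h}$ lie in $A$) are exactly the details being elided there.
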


\begin{proof}
  If $A\cap{\upset{h}}$ contains either $(h,1)$ or $(1,h)$, applying the
  complement operation from Lemma~\ref{barops} we get the other and hence
  $\upset{h}\subseteq{A}$.  The rest follows from the facts that
  $\Lambda$ is order preserving and every element of $\Delta_S$ is a
  constant.  
\end{proof}

It is easy to check that the binary partial operation $\lambda_1$ with
domain $r_1$ having graph
\[\gr(\lambda_1)=\{(0,0,0),(h,h,h),(1,1,1),(0,h,h),(0,1,h),(h,0,0),(h,1,h),(1,h,1)\},\]
is in fact algebraic. It combines $U_1$ and $L_2$.   Similarly, the
combination of  $U_2$ and $L_1$ yields the
algebraic binary partial operation $\lambda_2$ with domain $r_1^{-1}$ having graph
\[\gr(\lambda_2)=\{(0,0,0),(h,h,h),(1,1,1),(0,h,0),(h,0,h),(1,0,h),(h,1,1),(1,h,h)\}.\]
These two partial operations are examples of algebraic binary partial
operations which are not restrictions of projections.
Lemma~\ref{restproj} places constraints on such operations to be
combinations  of $L_i$ and $U_k$ with $i \neq k$.  
In a sense, $\lambda_1$ and $\lambda_2$ are the only such examples, as
it is more clearly stated in the proof of the next proposition.

\begin{proposition}\label{strong}
  Let $n\in\N$ and $\Pi_i^{n}:\uS^n\longrightarrow{\uS}$
  denote the $i$-th projection map for any $i\in\N$ with
  $1\leq{i}\leq{n}$ and $\Tau$ denote the discrete topology,
  then the structure 
  \[
  \uSs=<S;\{r_1,r_2,r_3\},\{\Pi_1^2,\Pi_2^2\},\{\lambda_1,\lambda_2\},\Tau>
  \] 
  yields a strong duality on $\calA$.  
\end{proposition}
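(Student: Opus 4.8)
The plan is to obtain $\uSs$ by pruning the ``Near Brute Force'' structure $\uSNU$ supplied by the \NUSDT. Since $\irr(\S)=2$ by Lemma~\ref{irr2}, that theorem~\cite{Clark-Davey}*{3.3.8} guarantees that $\uSNU$, formed from $\uS$ by adjoining every algebraic total operation and every algebraic partial operation of arity at most $2$, already yields a strong duality on $\calA$. Because any structure that strongly entails $\uSNU$ also yields a strong duality~\cite{Clark-Davey}*{3.2.3}, and deleting a partial operation that is a restriction of an operation remaining in the structure preserves strong entailment, it suffices to show that $\uSNU$ prunes down to $\uSs$ in exactly this way: every operation or partial operation of $\uSNU$ that is absent from $\uSs$ must be redundant, that is, a restriction of one of $\Pi_1^2,\Pi_2^2,\lambda_1,\lambda_2$ (or else the identity, which imposes no constraint and may simply be dropped).

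First I would dispose of the total and unary operations. By Proposition~\ref{proj} the only algebraic total operations of arity at most $2$ are $\Pi_1^1,\Pi_1^2,\Pi_2^2$; the unary one is the identity, and $\Pi_1^2,\Pi_2^2$ already sit in $\uSs$. There are no proper algebraic unary partial operations, since $\S$ is its own only subalgebra, so every unary partial operation is total and hence already accounted for. The entire burden therefore falls on the algebraic binary partial operations.

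The heart of the argument, and the step I expect to be the main obstacle, is to show that every algebraic binary partial operation $\Lambda\in\calA(A,\S)$ with $A\leq\S^2$ is a restriction of one of $\Pi_1^2,\Pi_2^2,\lambda_1,\lambda_2$. Lemma~\ref{restproj} already pins down $\Lambda$ on the two regions $\downset{h}=\{0,h\}^2$ and $\upset{h}=\{h,1\}^2$: on $A\cap\downset{h}$ it agrees with $L_1$ or $L_2$, and on $A\cap\upset{h}$, when the latter meets the off-diagonal, with $U_1$ or $U_2$ (when $A\cap\upset{h}$ is diagonal only, $\Lambda$ fixes the constants $h$ and $1$, so every projection agrees there and the $U$-type is free). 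This sorts $\Lambda$ into one of four types given by the pair $(U_i,L_k)$. The matching types $(U_1,L_1)$ and $(U_2,L_2)$ make $\Lambda$ a restriction of $\Pi_1^2$ or $\Pi_2^2$. For a mixed type, say $(U_1,L_2)$, the only points of $\S^2$ outside the two regions are $(0,1)$ and $(1,0)$, and I would exclude $(1,0)$ from $\dom\Lambda$ using the lattice operations of Lemma~\ref{barops}: from $(1,0)\meet(h,h)=(h,0)\in\downset{h}$, preservation of $\meet$ forces $\Lambda(1,0)\meet h=L_2(h,0)=0$, hence $\Lambda(1,0)=0$; while from $(1,0)\join(h,h)=(1,h)\in\upset{h}$, preservation of $\join$ forces $\Lambda(1,0)\join h=U_1(1,h)=1$, hence $\Lambda(1,0)=1$, a contradiction. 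Thus $\dom\Lambda\subseteq\S^2\setminus\{(1,0)\}=\dom(\lambda_1)$, and the same style of computation pins the value at $(0,1)$, when defined, to $h=\lambda_1(0,1)$, so $\Lambda$ agrees with $\lambda_1$ throughout its domain and is a restriction of it; symmetrically, type $(U_2,L_1)$ yields a restriction of $\lambda_2$.

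With this classification in hand the proof closes quickly. After deleting every redundant binary partial operation, the surviving structure has relations $r_1,r_2,r_3$ and operations $\Pi_1^2,\Pi_2^2,\lambda_1,\lambda_2$, which is precisely $\uSs$. Each deleted partial operation has a subalgebra of $\S^2$ as its domain, hence one of the relations of Lemma~\ref{subS2}, all of which are entailed by $r_1,r_2,r_3$; being a restriction of a surviving operation to such an entailed domain, it is recovered by restriction. Therefore $\uSs$ strongly entails $\uSNU$, and by~\cite{Clark-Davey}*{3.2.3} it yields a strong duality on $\calA$, as claimed.
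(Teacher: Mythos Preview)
Your proposal is correct and follows essentially the same route as the paper: prune $\uSNU$ via the \MSSDL\ by showing every algebraic binary partial operation is a restriction of one of $\Pi_1^2,\Pi_2^2,\lambda_1,\lambda_2$, using Lemma~\ref{restproj} together with meet/join against $(h,h)$ to pin down values at the off-region points $(0,1),(1,0)$. The paper's case analysis runs by domain (it computes $\lambda(0,1)$ for $A=r_1$ first and reads off the type from that value), whereas yours runs by $(U_i,L_k)$-type; note that your matching-type claim also needs the off-region check you carried out in the mixed case, but the identical computation handles it.
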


\begin{proof}

  As $\Pi_1^1$ is the identity map on $S$, it has no effect on any
  topological category generated by a structure with $S$ as its
  carrier set. Therefore,
  we do not need to include it in the list of total operations.
  By the \MSSDL\ we only need to show that any algebraic binary partial
  operation in the structure $\uSNU$ is a restriction of either a
  projection or one of $\lambda_1$, $\lambda_2$. 
  From Lemma~\ref{restproj}, we see that the only homomorphisms which are
  not restrictions of projections must consist of either
  a combination of $L_1$ and $U_2$ or a combination of $L_2$ and
  $U_1$.
  Let us first consider
  the subalgebra $A=r_1\leq \S^2$ which contains the element
  $(0,1)$. Let $\lambda:A\to \S$ be a homomorphism which is not a
  restriction of a projection.  
  If we had $\lambda(0,1)=0$ this would force 
  $\lambda(0,h)=\lambda(h,h)\meet\lambda(0,1)=h\meet 0=0$ and
  by Lemma~\ref{restproj}, $\lambda$ restricted to $A\cap{\downset{h}}$
  would have to equal $L_1$. Moreover, we would have
  $\lambda(h,1)=\lambda(h,h)\join\lambda(0,1)=h\join 0=h$, and
  $\lambda$ restricted to $A\cap{\upset{h}}$ would have to
  equal $U_1$, making
  $\lambda$ a restriction of $\Pi_1^2$. Similarly, if we had
  $\lambda(0,1)=0$ this would force $\lambda$ to be a restriction of
  $\Pi_2^2$.  Therefore, we must have $\lambda(0,1)=h$. As above, this
  forces 
  $\lambda(0,h)=\lambda(h,h)\meet\lambda(0,1)=h\meet h=h$, and
  $\lambda(h,1)=\lambda(h,h)\join\lambda(0,1)=h\join h=h$, making
  $\lambda$ a combination of $L_2$ and $U_1$, i.e. $\lambda_1$.   So,
  $\lambda_1$ is the only partial algebraic operation with domain
  $r_1$ which is not a restriction of a projection.
  A similar argument shows that $\lambda_2$ is the only partial
  algebraic operation with domain $r_1^{-1}$, which is not a restriction of
  a projection.

  Note that the argument above does not make use of the fact that
  $(h,0)$ is in $r_1$.  Therefore, it also shows that the only partial
  algebraic operation with domain $r_2$ which is not a restriction of
  a projection, must be the restriction of $\lambda_1$.  Similarly,
  the only partial algebraic operation with domain $r_2^{-1}$ which is not
  a restriction of a projection, must be the restriction of
  $\lambda_2$.

  As shown in Lemma~\ref{subS2} any other subalgebra $A\leq\S^2$ must
  be a subalgebra of $r_1\cap r_1^{-1}$.  Hence, by
  Lemma~\ref{restproj}, any partial algebraic operation with domain
  $A$, which is not a restriction of a projection,  
  must be a restriction of either $\lambda_1$ or $\lambda_2$. 
\end{proof}

As an intermediate step towards an optimal strong duality, 
we will show that we can eliminate the total operations and $\lambda_2$ from
$\uSs$ and still achieve a strong duality.  First we need the
following definitions:

\begin{definition}\label{epclone}\cite{Clark-Davey}
A set of partial operations on a set $X$ is called an {\sl enriched
partial clone} if it is closed under composition and contains
$\{\Pi_i^n\|1\leq{i}\leq{n},n\in\N\}$.  The enriched
partial clone generated by a set of partial operations $H$ on a set $X$
is the smallest enriched partial clone containing $H$ and is denoted $[H]$.
Given a structured topological space 
$\bfX=\langle X;R,G,H,\Tau \rangle$, the enriched partial clone of 
$\bfX$ is the smallest enriched partial clone on $X$ containing
$G\cup{H}$ and is denoted $[G\cup{H}]$.  Furthermore, let $\mathcal{P}$ 
denote the set of all finitary algebraic, partial or
total operations on $\S$.  Then $\mathcal{P}$ is an enriched
partial clone on $S$ and is referred to as the {\sl enriched
partial hom-clone} of $\S$.    
\end{definition}

\begin{definition}\label{homent}\cite{Clark-Davey}
Let $P\subseteq{\mathcal{P}}$ and $k\in{\mathcal{P}}$. We say that $P$
{\sl hom-entails} $k$ if, for all non-empty sets $\Omega$, each topologically
closed subset of $S^{\Omega}$ which is closed under the partial
operations in $P$ is also closed under $k$.  Define
$\overline{P}=\{k\in{\mathcal{P}}|P$ hom-entails $k\}$.  Then
$P\longmapsto{\overline{P}}$ is a closure operator on $\mathcal{P}$
and we refer to $\overline{P}$ as the hom-entailment closure of $P$.
\end{definition}

\begin{corollary}\label{smallstrong}
The structure
$\uSsp=\langle S;\{r_1,r_2,r_3\},\{\lambda_1\},\Tau\rangle$ 
yields a strong duality on $\calA$.
\end{corollary}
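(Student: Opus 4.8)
The plan is to reduce Corollary~\ref{smallstrong} to Proposition~\ref{strong} by exhibiting a chain of structures, each strongly entailing the previous one, that strips away the operations $\Pi_1^2$, $\Pi_2^2$, and $\lambda_2$ from $\uSs$ while preserving strong duality. The governing principle is the \MSSDL~\cite{Clark-Davey}*{3.2.3}: if a structure strongly entails the generating structure $\uSs$ of an established strong duality, then it too yields a strong duality on $\calA$. So the whole task is to verify that the enriched partial hom-clone generated by $\{r_1,r_2,r_3\}\cup\{\lambda_1\}$ recovers each of the discarded operations. First I would dispense with the projections: $\Pi_1^2$ and $\Pi_2^2$ are total operations, but the discussion preceding Proposition~\ref{strong} already notes that we may delete any partial or total operation which is a restriction of another operation remaining in the structure. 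More to the point, both projections arise as compositions within the enriched partial clone generated by $\lambda_1$ together with the identity, so $[\{\lambda_1\}]$ contains them and hom-entailment is immediate.

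The genuine content lies in showing that $\lambda_2$ can be recovered from $\lambda_1$ together with the relational structure. Here I would use the complement/bar operation from Lemma~\ref{barops} together with the relations already present. Recall that $\lambda_1$ combines $U_1$ and $L_2$ on domain $r_1$, while $\lambda_2$ combines $U_2$ and $L_1$ on domain $r_1^{-1}$. The two partial operations are mirror images of one another under the swap of coordinates, and that swap is itself expressible: $r_1^{-1}$ is entailed by $r_1$ (a morphism preserving a binary relation preserves its inverse, as noted in the discussion before Proposition~\ref{duality}), so any closed substructure closed under the structure of $\uSsp$ is automatically closed with respect to the inverse relation. The key step is to realise $\lambda_2$ as a composite of $\lambda_1$ with the definable unary bar operation, exploiting the identities L1 and L2 of Lemma~\ref{barops}; concretely, one checks on the eight-element graph that $\gr(\lambda_2)$ is obtained from $\gr(\lambda_1)$ by conjugating with the bar operation on the relevant coordinates, so $\lambda_2\in[\{\lambda_1\}\cup\{r_1\}]$ after the bar is expressed via the complement in $[h,1]$. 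Since the bar operation is a term operation of $\S$ and hence lies in the enriched partial hom-clone, this composite is legitimate.

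Having shown $\{r_1,r_2,r_3,\lambda_1\}$ hom-entails $\Pi_1^2$, $\Pi_2^2$, and $\lambda_2$, I would conclude that $\uSsp$ strongly entails $\uSs$, and by the \MSSDL\ that $\uSsp$ yields a strong duality on $\calA$. The main obstacle I anticipate is the recovery of $\lambda_2$: one must be careful that the bar operation, being only a term function rather than a genuine total operation of the signature, interacts correctly with the partiality of $\lambda_1$, so that the domains match up and the composite is in fact $\lambda_2$ on all of $r_1^{-1}$ rather than merely on a proper subrelation. Verifying this amounts to a finite but slightly delicate check against the explicit graphs $\gr(\lambda_1)$ and $\gr(\lambda_2)$ listed before Proposition~\ref{strong}, confirming that no point of $r_1^{-1}$ is lost when passing through the bar conjugation. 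Once that domain bookkeeping is settled, the rest is routine invocation of the strong-entailment machinery.
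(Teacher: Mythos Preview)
Your overall strategy---invoke the \MSSDL\ and show that $\{\lambda_1\}$ alone generates the same enriched partial clone as $\{\Pi_1^2,\Pi_2^2,\lambda_1,\lambda_2\}$---is exactly the paper's strategy, and your handling of the projections is fine (they lie in every enriched partial clone by definition). The gap is in your recovery of $\lambda_2$.

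You correctly note that $\lambda_1$ and $\lambda_2$ are mirror images under swapping coordinates, but then you abandon that observation and try to realise $\lambda_2$ by conjugating $\lambda_1$ with the bar operation. That does not work: the bar map $x\mapsto\overline{x}=(x\vee h)'$ is a \emph{term operation} of $\S$, i.e.\ a morphism on the $\chi$ side (cf.\ Lemma~\ref{lemma:term functions}), not an element of the enriched partial hom-clone $\mathcal{P}$. Indeed $\overline{0}=1$, so bar does not preserve the constant $0$ and is not in $\calA(\S,\S)$; by Proposition~\ref{proj} the only total algebraic unary operation is the identity. Since $[\{\lambda_1\}]$ is built only from $\lambda_1$, projections, and composition, the bar is simply unavailable, and your ``finite but slightly delicate check'' cannot be carried out inside the right clone.

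The fix is far simpler than what you attempted: the coordinate swap \emph{is} already a composition with projections. Concretely, $\lambda_2=\lambda_1(\Pi_2^2,\Pi_1^2)$, i.e.\ $\lambda_2(x,y)=\lambda_1(y,x)$; one verifies this immediately from the listed graphs (and the domains match since $\dom(\lambda_2)=r_1^{-1}$ is exactly the swap of $\dom(\lambda_1)=r_1$). Hence $\lambda_2\in[\{\lambda_1\}]$, so $[\{\lambda_1\}]=[G\cup H]$, and the \TALM\ together with Proposition~\ref{strong} and the \BFSDT\ give $\overline{\{\lambda_1\}}=\mathcal{P}$. This is precisely the paper's argument.
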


\begin{proof}
  Let $G=\{\Pi_1^2,\Pi_2^2\}$ and $H=\{\lambda_1,\lambda_2\}$, then by
  Lemma~\ref{strong} and the \BFSDT~\cite{Clark-Davey}*{3.2.2},
  $\overline{G\cup{H}}=\mathcal{P}$, i.e. $G\cup{H}$ hom-entails every
  finitary algebraic partial or total operation on $\S$.  Now consider
  Definition~\ref{epclone}, and note that $[H]=[G\cup{H}]$.  Further
  note that $\lambda_1(\Pi_2^2,\Pi_1^2)=\lambda_2$, and we see that
  $[\lambda_1]=[H]$.  Let $k:A\longrightarrow{S}$ be any element of
  $\mathcal{P}$ and consider what we refer to as the
  \TALM~\cite{Clark-Davey}*{9.4.1}.  Parts (i.a) and (i.c) show
  $\overline{\{\lambda_1\}}=\mathcal{P}$ and hence the structure
  $\uSsp$ yields a strong duality on $\calA$.
\end{proof}

Now we wish to show that the partial operation $\lambda_1$ entails $r_1$
and $r_3$, for that we need the following Lemma:

\begin{lemma}\label{entail}
  Let $\uS'=\langle S;\{\lambda_1\},\Tau \rangle$ with $\lambda_1$ as
  the only partial operation.
  \begin{enumerate}

  \item\label{entail part 1} Let $\bfX\in\ScP(\uS')$ and
    $g:\bfX\longrightarrow{\uS'}$ be a morphism.  Then $g$ preserves
    $r_1$ and $r_3$, and therefore $\{\lambda_1\}$ entails $r_1$ and
    $r_3$.

  \item\label{entail part 2} Now, let $\Phi:\uS'\longrightarrow{\uS'}$
    be given by $\Phi=(h,0,0)$. Then $\Phi$ preserves $\lambda_1$ but
    not $r_2$, and therefore $\uS'$ does not yield a duality on
    $\calA$.

  \end{enumerate}
\end{lemma}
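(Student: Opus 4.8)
The plan is to treat the two parts separately, with the real content concentrated in the $r_3$ entailment of Part~\ref{entail part 1}; preservation of $r_1$ and all of Part~\ref{entail part 2} are either standard or finite checks. For $r_1$ I would argue that preservation is essentially free: since $r_1=\dom(\lambda_1)$, any morphism $g:\bfX\to\uS'$ that preserves $\lambda_1$ must carry a pair $(x,y)\in r_1^{\bfX}=\dom(\lambda_1^{\bfX})$ to a pair $(g(x),g(y))$ at which $\lambda_1$ is again defined, i.e.\ into $r_1$. In other words a partial operation always entails its own domain, so $\{\lambda_1\}$ entails $r_1$.

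For $r_3=\{(0,0),(0,h),(h,0),(h,h),(1,1)\}$ the key observation I would exploit is that $r_3\subseteq r_1$ and, more importantly, that $\lambda_1$ restricted to $r_3$ coincides with the second-coordinate projection: one checks $\lambda_1(a,b)=b$ on each of the five pairs of $r_3$. Hence, given $(x,y)\in r_3^{\bfX}$, the element $\lambda_1^{\bfX}(x,y)$ is defined (because $r_3\subseteq r_1$) and equals $y$ coordinatewise, so $\lambda_1^{\bfX}(x,y)=y$. Applying $g$ and using preservation of $\lambda_1$ produces the single constraint $g(y)=\lambda_1(g(x),g(y))$. Since we already know $(g(x),g(y))\in r_1$, I would finish by scanning the eight pairs of $r_1$ against the equation $b=\lambda_1(a,b)$: exactly the three pairs $(0,1),(h,1),(1,h)$ fail it (for instance $\lambda_1(1,h)=1\neq h$), while the remaining five are precisely $r_3$. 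Therefore $(g(x),g(y))\in r_3$, and $\{\lambda_1\}$ entails $r_3$.

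For Part~\ref{entail part 2} the work is a finite verification. First I would tabulate $\Phi=(h,0,0)$ against the eight triples of $\gr(\lambda_1)$, checking in each row both that $(\Phi(a),\Phi(b))\in r_1$ and that $\Phi(\lambda_1(a,b))=\lambda_1(\Phi(a),\Phi(b))$; this confirms that $\Phi$ preserves $\lambda_1$, so $\Phi\in\chi'(\uS',\uS')$. Then I would exhibit the single witness $(0,h)\in r_2$ for which $(\Phi(0),\Phi(h))=(h,0)\notin r_2$, showing $\Phi$ does not preserve $r_2$. Finally, to deduce that $\uS'$ cannot yield a duality, I would invoke Lemma~\ref{lemma:term functions}: if $\uS'$ yielded a duality then $\chi'(\uS',\uS')$ would equal $\chi(\uS,\uS)$ as listed in Lemma~\ref{StoS}, yet $\Phi=(h,0,0)$ is a morphism absent from that list (indeed it is not even order preserving, so it is no term function), a contradiction.

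The main obstacle is the $r_3$ step; all else is the standard domain entailment or a routine finite computation. The crux is noticing that $\lambda_1$ degenerates to the second projection on $r_3$, which turns preservation of the partial operation into the scalar equation $g(y)=\lambda_1(g(x),g(y))$ and thereby forces the image pair back into $r_3$. I would also be careful to read preservation of a partial operation in the correct direction, namely that definedness of $\lambda_1^{\bfX}(x,y)$ transfers to definedness of $\lambda_1$ at the image pair; this is exactly the mechanism that makes both the domain argument for $r_1$ and the $r_3$ argument go through.
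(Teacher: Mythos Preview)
Your proposal is correct and follows essentially the same approach as the paper's proof: for $r_1$ you both use that $r_1=\dom(\lambda_1)$, for $r_3$ you both use that $\lambda_1$ restricted to $r_3$ is the second projection so that preservation yields $g(y)=\lambda_1(g(x),g(y))$, and for Part~\ref{entail part 2} you both verify by inspection and invoke Lemmas~\ref{StoS} and~\ref{lemma:term functions}. Your write-up is in fact slightly more explicit than the paper's at the final step of the $r_3$ argument, where the paper simply asserts that $g(Z)=g(\lambda_1(Y,Z))$ ``implies'' $(g(Y),g(Z))\in r_3$ while you spell out the scan over $r_1$ showing that $b=\lambda_1(a,b)$ characterizes $r_3$.
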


\begin{proof}
  (\ref{entail part 1})
  Let $(Y,Z)\in{r_1^{\bfX}}=\dom(\lambda_1^\bfX)$. As $g$ preserves
  $\lambda_1$, $(g(Y),g(Z))\in{\dom(\lambda_1^{\uS'})}=r_1^{\uS'}$ and
  $g$ preserves $r_1$.
  Now let $(Y,Z)\in{r_3^{\bfX}}\subseteq{\dom(\lambda_1^\bfX)}$, and
  consider $(Y,Z,\lambda_1(Y,Z))$.  For any $i$,
  $(Y_i,Z_i)\in{r_3^{\uS'}}$ hence $Z_i=\lambda_1(Y_i,Z_i)$ and
  therefore $\lambda_1(Y,Z)=Z\in\bfX$.  Moreover,
  $g(Z)=g(\lambda_1(Y,Z))$ which implies that
  $(g(Y),g(Z))\in{r_3^{\uS'}}$.
  The fact that $\{\lambda_1\}$ entails $r_1$ and $r_3$ follows from
  the \EL.  

  (\ref{entail part 2})
  The fact that $\Phi$ preserves $\lambda_1$ is easily seen by
  inspection of $\Phi(\lambda_1(x,y))$ and
  $\lambda_1(\Phi(x),\Phi(y))$ for each
  $(x,y)\in{\dom(\lambda_1)}$.  Now note that
  $(\Phi(0),\Phi(h))=(h,0)$ and hence $\Phi$ fails to preserve $r_2$.
  From Lemmas~\ref{StoS} and~\ref{lemma:term functions} we see that
  $\uS'$ does not yield duality on $\calA$.  
\end{proof}

We now have enough results to prove the major Theorem of this section.

\begin{theorem}
Let $\Tau$ denote the discrete topology and
$\uSos=\langle S;\{r_2\},\{\lambda_1\},\Tau\rangle$, then $\uSos$ yields an
optimal strong duality on $\calA$.
\end{theorem}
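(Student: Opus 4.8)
The plan is to bootstrap off the strong duality already established for the larger structure $\uSsp$ in Corollary~\ref{smallstrong}. I would argue that $\uSos$ strongly entails $\uSsp=\langle S;\{r_1,r_2,r_3\},\{\lambda_1\},\Tau\rangle$: the two structures carry the identical partial operation $\lambda_1$, so $\uSos$ hom-entails the partial-operation part of $\uSsp$ trivially, while on the relational side $r_2$ is present in $\uSos$ outright and $r_1,r_3$ are entailed by $\lambda_1$ via Lemma~\ref{entail}.\ref{entail part 1}. Thus $\uSos$ entails all of $r_1,r_2,r_3$ and hom-entails $\lambda_1$, i.e. $\uSos$ strongly entails $\uSsp$. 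Since $\uSsp$ yields a strong duality, the \MSSDL\ then gives that $\uSos$ yields a strong duality.

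For optimality I would show that neither surviving component can be removed without losing even the underlying duality, and hence the strong duality. Removing $r_2$ leaves exactly $\uS'=\langle S;\{\lambda_1\},\Tau\rangle$, and Lemma~\ref{entail}.\ref{entail part 2} already exhibits $\Phi=(h,0,0)$ preserving $\lambda_1$ but not $r_2$, so $\uS'$ does not yield a duality. Removing $\lambda_1$ leaves the purely relational $\langle S;\{r_2\},\Tau\rangle$; here I would repeat the argument of Theorem~\ref{optdual}. If that structure dualized, then by the \DET\ it would have to entail $r_1$, so by the \EL\ every morphism preserving $r_2$ would preserve $r_1$; but the map $\alpha$ on $\{(h,0),(0,1)\}$ with $\alpha(h,0)=1$ and $\alpha(0,1)=0$ from that proof preserves $r_2$ yet not $r_1$, a contradiction. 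Hence both $r_2$ and $\lambda_1$ are indispensable, and the strong duality is optimal.

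The step demanding the most care is the use of the \MSSDL\ in the first paragraph: I must make sure that discarding the redundant relations $r_1,r_3$ preserves \emph{strong} duality and not merely ordinary duality. The reassurance is that the component responsible for strongness, namely $\lambda_1$, is retained untouched — indeed the proof of Corollary~\ref{smallstrong} shows $\overline{\{\lambda_1\}}=\mathcal{P}$, so the enriched partial hom-clone of $\uSos$ is already all of $\mathcal{P}$ — while the relational content needed for the duality itself is recovered through Lemma~\ref{entail}.\ref{entail part 1}. With the strongness requirement (the full hom-clone) and the duality requirement (entailment of $r_1,r_2,r_3$) both met, the strong duality transfers cleanly to $\uSos$.
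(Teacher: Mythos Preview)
Your proposal is correct and follows essentially the same route as the paper: strong duality via Corollary~\ref{smallstrong}, Lemma~\ref{entail}.\ref{entail part 1}, and the \MSSDL; optimality via Lemma~\ref{entail}.\ref{entail part 2} for dropping $r_2$ and via the counterexample of Theorem~\ref{optdual} for dropping $\lambda_1$. The only minor point is terminological: in your last paragraph you write ``the enriched partial hom-clone of $\uSos$ is already all of $\mathcal{P}$,'' but what Corollary~\ref{smallstrong} actually gives is that the \emph{hom-entailment closure} $\overline{\{\lambda_1\}}$ equals $\mathcal{P}$ (the enriched partial clone $[\lambda_1]$ is in general smaller); this does not affect the argument.
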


\begin{proof}
The fact that $\uSos$ yields a strong duality on $\calA$ follows from
Theorem~\ref{optdual}, Corollary~\ref{smallstrong},
Lemma~\ref{entail}.\ref{entail part 1} and the \MSSDL.
The fact that it is optimal follows from
Lemma~\ref{entail}.\ref{entail part 2} and
Theorem~\ref{optdual}.
\end{proof}

Combining this theorem with Lemma~\ref{lemma:term functions} we get
the final result. 

\begin{corollary}\label{persist}
  Let $\chi_{os}=\ScP(\uSos)$, then for any $n\in\N$,
  $\chi_{os}(\uSosn,\uS)_J$  is the n-dimensional Hairy Cube. 
\end{corollary}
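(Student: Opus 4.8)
The plan is to derive this as a direct corollary of the optimal strong duality just established for $\uSos$ together with Lemma~\ref{lemma:term functions}, so that almost all the work has already been done and what remains is to line up the hom-sets correctly. First I would apply Lemma~\ref{lemma:term functions} with $\uS'=\uSos$: since the preceding theorem shows that $\uSos$ yields a (strong, hence in particular natural) duality on $\calA$, the lemma tells us that the $n$-ary morphisms from finite powers of $\uSos$ into $\uSos$ are precisely the $n$-ary term functions of $\S$, giving
\[ \chi_{os}(\uSosn,\uSos)=\chi(\uS^n,\uS) \]
as sets of functions $S^n\to S$, each carrying the pointwise partial order inherited from $\S^{S^n}$.

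The only substantive point is to see that nothing changes when the target $\uSos$ is replaced by $\uS$, i.e. that $\chi_{os}(\uSosn,\uS)=\chi_{os}(\uSosn,\uSos)$. Here I would invoke entailment: by Lemma~\ref{entail}.\ref{entail part 1} the single partial operation $\lambda_1$ entails $r_1$ and $r_3$, and $r_2$ already sits in $\uSos$, so the full relational structure $\{r_1,r_2,r_3\}$ of $\uS$ is part of the derived structure borne by every object of $\chi_{os}$, in particular by $\uSosn$ (with the relations read off pointwise). Thus a map out of $\uSosn$ preserves the structure of the target $\uS$ exactly when it preserves $r_2$ and $\lambda_1$, i.e. exactly when it is a $\chi_{os}$-morphism into $\uSos$; in both descriptions the maps are the $n$-ary term functions of $\S$. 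This identifies $\chi_{os}(\uSosn,\uS)$ with $\chi(\uS^n,\uS)$ as partially complemented distributive lattices.

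Because these two lattices are literally the same --- the same underlying functions ordered pointwise --- passing to join-irreducible elements gives
\[ \chi_{os}(\uSosn,\uS)_J=\chi(\uS^n,\uS)_J. \]
By Theorem~\ref{thm:hairy cube} the right-hand poset is the $n$-dimensional Hairy Cube: the base $Y^n\cong 2^n$ together with pairwise-incomparable hairs, each base element covered by a unique hair and each hair covering a unique base element. This is exactly the claim.

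I expect the only place requiring care to be the middle step. Although $\chi_{os}$ is generated by the deliberately economical structure $\uSos=\langle S;\{r_2\},\{\lambda_1\},\Tau\rangle$, the relations $r_1$ and $r_3$ are not literally listed; they are recovered only through the Entailment Lemma via Lemma~\ref{entail}.\ref{entail part 1}. One must therefore be explicit that preservation of $\lambda_1$ forces preservation of $r_1,r_3$ (and conversely that a term function preserves the algebraic partial operation $\lambda_1$), so that the hom-set into $\uS$ is legitimately the term-function lattice. After that the argument is formal: equal lattices have equal posets of join-irreducibles, and the geometric description of that poset is supplied by Theorem~\ref{thm:hairy cube}.
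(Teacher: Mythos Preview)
Your proposal is correct and follows essentially the same approach as the paper, which simply says to combine the preceding strong duality theorem with Lemma~\ref{lemma:term functions}; you make explicit the appeal to Theorem~\ref{thm:hairy cube} and add a careful justification (via Lemma~\ref{entail}.\ref{entail part 1}) for why the hom-set with target $\uS$ agrees with the one with target $\uSos$, a point the paper glosses over. This extra care is harmless and does not change the argument's substance.
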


\bibsection{\refname}

\begin{biblist}

\bib{Clark-Davey}{book}{
   author={Clark, David M.},
   author={Davey, Brian A.},
   title={Natural dualities for the working algebraist},
   series={Cambridge Studies in Advanced Mathematics},
   volume={57},
   publisher={Cambridge University Press},
   place={Cambridge},
   date={1998},
   pages={xii+356},
   isbn={0-521-45415-8},
   review={\MR{1663208 (2000d:18001)}},
}

\bib{Clouse}{thesis}{
   author={Clouse, Daniel J.},
   title={A Dual Representation of Boolean Semirings in a Category of
Structured Topological Spaces},
   date={2002},
   organization={Binghamton University},
   type={Ph.D. dissertation},
}

\bib{Eilenberg}{book}{
   author={Eilenberg, Samuel},
   title={Automata, languages, and machines. Vol. A},
   note={Pure and Applied Mathematics, Vol. 58},
   publisher={Academic Press [A subsidiary of Harcourt Brace Jovanovich,
   Publishers], New York},
   date={1974},
   pages={xvi+451},
   review={\MR{0530382 (58 \#26604a)}},
}

\bib{Guzman}{article}{
   author={Guzm{\'a}n, Fernando},
   title={The variety of Boolean semirings},
   journal={J. Pure Appl. Algebra},
   volume={78},
   date={1992},
   number={3},
   pages={253--270},
   issn={0022-4049},
   review={\MR{1163278 (93d:08007)}},
}

\bib{Johnstone}{book}{
   author={Johnstone, Peter T.},
   title={Stone spaces},
   series={Cambridge Studies in Advanced Mathematics},
   volume={3},
   publisher={Cambridge University Press},
   place={Cambridge},
   date={1982},
   pages={xxi+370},
   isbn={0-521-23893-5},
   review={\MR{698074 (85f:54002)}},
}

\bib{Kuich-Salomaa}{book}{
   author={Kuich, Werner},
   author={Salomaa, Arto},
   title={Semirings, automata, languages},
   series={EATCS Monographs on Theoretical Computer Science},
   volume={5},
   publisher={Springer-Verlag},
   place={Berlin},
   date={1986},
   pages={v+374},
   isbn={3-540-13716-5},
   review={\MR{817983 (87h:68093)}},
}

\bib{Stone}{article}{
   author={Stone, M. H.},
   title={Applications of the theory of Boolean rings to general topology},
   journal={Trans. Amer. Math. Soc.},
   volume={41},
   date={1937},
   number={3},
   pages={375--481},
   issn={0002-9947},
   review={\MR{1501905}},
}

\end{biblist}

\end{document}